\numberwithin{equation}{section}
\newtheorem{pro}{{Proposition}}
\newtheorem{lemma}[pro]{{Lemma}}
\newtheorem{theorem}[pro]{{Theorem}}
\newtheorem{definition}[pro]{{Definition}}
\newtheorem{corollary}[pro]{{Corollary}}
\newtheorem{thm:intro}{Theorem}
\newtheorem{cor:intro}[thm:intro]{Corollary}
\newtheorem{def:intro}{Definition}
\newtheorem*{thm*}{Theorem}
\newcommand{\ra}{{\rightarrow}}
\newcommand{\lra}{{\longrightarrow}}
\newcommand{\cf}{{cf$.$\,}}
\newcommand{\cN}{{\mathcal{N}}}
\newcommand{\cA}{{\mathcal{A}}}
\newcommand{\cS}{S}  
\newcommand{\SU}{\mathop{\rm SU}\nolimits}
\newcommand{\al}{\alpha}
\newcommand{\be}{\beta}
\newcommand{\om}{\omega}
\newcommand{\we}{\wedge}
\newcommand{\lam}{\lambda}
\newcommand{\CC}{{\mathbb C}}
\newcommand{\HH}{{\mathbb H}}
\newcommand{\RR}{{\mathbb R}}
\newcommand{\PO}{\mathop{\rm PO}\nolimits}
\newcommand{\SO}{\mathop{\rm SO}\nolimits}
\newcommand{\Conf}{\mathop{\rm Conf}\nolimits}
\newcommand{\Aut}{\mathop{\rm Aut}\nolimits}
\newcommand{\Psh}{\operatorname*{Psh}\,}
\newcommand{\Diff}{\operatorname*{Diff}\,}
\newcommand{\Iso}{\mathop{\rm Iso}\nolimits}
\newcommand{\Isom}{\operatorname*{Iso}}  
\newcommand{\Hoth}{\mathop{\rm Hoth}\nolimits}
\newcommand{\sfD}{{\mathsf{D}}}
\newcommand{\C}{\mathcal{C}}
\newcommand{\Cs}{\mathcal{C}^{\infty}}
\newcommand{\res}{\mathrm{res}}
\newcommand{\Ind}{\mathrm{Ind}}
\newcommand{\Inds}{\Ind^{\infty} \,}
\newcommand{\supp}{\mathop{\mathrm{supp}}} 
\newcommand{\zsp}{\{0\}}
\begin{document}
\baselineskip 13pt 
\pagestyle{myheadings} 
\thispagestyle{empty}
\setcounter{page}{1}

\title[]{A note on vanishing of equivariant differentiable cohomology of
proper actions  and  application to  CR-automorphism and
conformal groups}
\author[]{Oliver Baues}
\address{Department of Mathematics\\ 
University of Fribourg\\
Chemin du Mus\' ee 23\\
CH-1700 Fribourg, Switzerland}
\email{oliver.baues@unifr.ch}

\author[]{Yoshinobu Kamishima}
\address{Department of Mathematics, Josai University\\
Keyaki-dai 1-1, Sakado, Saitama 350-0295, Japan}
\email{kami@tmu.ac.jp}
\keywords{$CR$-structure, Pseudo-Hermitian structure, Conformal structure, Lie groups, Differentiable cohomology, Equivariant cohomology} 
\subjclass[2010]{22E41, 53C10, 57S20, 53C55}  

\begin{abstract} We establish that for any proper action of a Lie group on  
a manifold the associated equivariant differentiable cohomology groups with
coefficients in modules of $\Cs$-functions vanish in all degrees except than zero. 
Furthermore let $G$ be a Lie group of $CR$-automorphisms of a strictly pseudo-convex 
$CR$-manifold $M$. We associate to $G$ a canonical class in the first differential cohomology of 
$G$ with coefficients in the $\Cs$-functions on $M$. This class is non-zero if and only if $G$
is essential in the sense that there does not exist a $CR$-compatible strictly pseudo-convex 
pseudo-Hermitian structure on $M$ which is preserved by $G$. We prove that 
a closed Lie subgroup $G$ of $CR$-automorphisms 
acts properly on $M$ if and only if its canonical  class vanishes. 
As a consequence of Schoen's  theorem,  it follows that 
for any strictly pseudo-convex $CR$-manifold $M$,  
there exists a compatible strictly pseudo-convex pseudo-Hermitian structure 
such that the CR-automorphism group for $M$ and the group of pseudo-Hermitian
transformations coincide, except for two kinds of spherical $CR$-manifolds. 
Similar results hold for conformal Riemannian and K\"ahler manifolds.
\end{abstract}
\date{January 27, 2021} 
\thanks{This work was partially supported by JSPS grant No 18K03284}

\maketitle
\thispagestyle{empty}

\section{Introduction}

The original motivation for this article arises from the observation that the dynamics of certain groups of automorphisms of geometric structures, for  example in the cases of $CR$- and conformal manifolds, are to a large extent  controlled by a one-cocycle  for the \emph{differentiable cohomology} of those groups with coefficients in the $\Cs$-functions. In fact, as we will show,  automorphism groups of such geometric structures act  properly if and only if their  \emph{canonical  cohomology class}, which is defined by this  one-cocycle,  vanishes. 
\smallskip 

Behind the vanishing of the canonical class for proper actions in the above special cases there is a general vanishing principle for the equivariant cohomology groups of function spaces related to proper actions of Lie groups.  This vanishing principle  not only holds in degree one, but for arbitrary degree different from zero.  
The considerably more simple special case concerning only the first differentiable cohomology suffices for the original applications for $CR$ and conformal manifolds though. 

\smallskip
In this article, we will work out the details of the theory of \emph{equivariant} continuous and differentiable cohomology groups associated to actions of Lie groups on manifolds and we will prove the vanishing theorem for proper actions. This material is harking back and based on original work of Van Est, Mostow, Mostow-Hochschild \cite{VanEst, Mostow, HM}. 
Our exposition, 
as presented in Sections \ref{sec:cont_cohomology} and \ref{sec:smooth_coefficients},  will employ the language and techniques of sheaf cohomology  to develop 
the necessary steps of the proofs in a structured and concise way. \smallskip 

This introduction is organized as follows. Before diving into the  presentation of the theory of differentiable cohomology,  we start by discussing the aforementioned applications in the context of automorphism groups of \emph{$CR$- and conformal manifolds} in Section \ref{sec1.1}. The structure of proofs and the relation with differentiable cohomology will be discussed in Section \ref{sec1.2}. Finally Section \ref{sec1.3} gives a general  account on  equivariant differentiable cohomology groups associated to proper actions. 

\subsection{Automorphisms of $CR$ and conformal manifolds} \label{sec1.1}
Let $\omega$ be a contact form on a connected smooth manifold $M$. Assume further that there exists a complex structure $J$ on the contact bundle $\ker \om$ which is  compatible with $\om$ in the sense that the Levi form $d\om  \circ J$ is a positive definite Hermitian form. Then  $$(M,\{\om,J\})$$ is called a \emph{pseudo-Hermitian manifold}. 
Putting 
$\sfD = \ker\, \om$, 
the pseudo-Hermitian structure $\{\om,J\}$  induces  a \emph{$CR$-structure}  
$\{\sfD,J\}$ on $M$, which  
is then a \emph{strictly pseudo-convex} $CR$-structure.

\smallskip  
These structures naturally associate
to $M$ two important groups. 
Namely the group of pseudo-Hermitian transformations $$ \Psh\left(M,\{\om,J\}\right) 
\; , $$ 
and   the  group  of $CR$-automorphisms 
$$ \Aut_{CR}\left(M, \{\sfD,J\} \right) \, . 
$$
The isometry group of a Riemannian manifold $M$ and also any closed subgroup of the isometry group itself  are Lie groups,  by the theorem of Myers and Steenrod \cite{MS}. Moreover, the isometry group acts properly on $M$. The pseudo-Hermitian group $\Psh\left(M,\{\om,J\}\right)$ is a Lie group, since it preserves the associated contact Riemannian metric $$ g = \omega \cdot \omega + d\omega \circ J \, . $$ 
The automorphism group of a strictly pseudo-convex $CR$-manifold is also a Lie group which follows by \cite{Tanaka, WE}. 

\smallskip 
\paragraph{\em Strictly pseudo-convex $CR$-manifolds}

For any strictly pseudo-convex $CR$-manifold 
 $(M, \{\sfD,J\})$, there is, in general,  
no canonical choice 
in the conformal class of one-forms 
$$[\om] = \, \{ \, f \cdot \om \mid f \in \C^\infty(M, \RR^{>0})\, \} $$  which are 
representing  $\sfD$. Remark that the pseudo-Hermitian group 
$$\Psh\left(M, \{ f \cdot \om, J \}\right) $$ 
is always contained in the group $\Aut_{CR}(M, \{\sfD,J\})$, but 
it may vary considerably with the choice of $f$. 
Moreover, the Lie groups $\Psh(M,\{f \cdot \om,J\})$ act properly on $M$, whereas in special cases of certain spherical $CR$-manifolds    
$$\Aut_{CR}(M, \{\sfD,J\})$$ is too large and cannot act properly on $M$ (compare Theorem \ref{thm:schoen} below).

\smallskip 
Now let $M$ be a strictly pseudo-convex $CR$-manifold. Assuming that  
$\Aut_{CR}(M)$ is acting properly on $M$,  we shall prove
that 
there exists a  pseudo-Hermitian structure on $M$ compatible with its $CR$-structure 
such that 
$ \Psh(M)$ and $\Aut_{CR}(M)$ coincide. This fact is already mentioned in the literature 
(see for example the remark following Conjecture 1.4 in \cite{JL}) but it is 
hard to locate a concise proof. 

\smallskip  
In the light of the celebrated result of R.\ Schoen \cite{SC} on the properness of $CR$-automorphism groups  the following thus holds: 
 
\begin{thm:intro}\label{thm:main_CR}
Let $(M,\{\sfD,J\})$ be a strictly 
pseudo-convex $CR$-manifold.
Then either one of the following holds: 
\begin{enumerate}
\item[(i)] There exists a pseudo-Hermitian structure $\{\eta,J\}$, with $D= \ker \eta $, 
such that
\[ \Aut_{CR}(M,\{\sfD,J\})=\Psh(M,\{\eta,J\}) \; .\]
%
\item[(ii)]
 $M$ has a spherical $CR$-structure isomorphic to
either the standard sphere $S^{2n+1}$ or the Heisenberg Lie group $\cN = \cN_{2n+1}$.
\end{enumerate} In case (ii) the following holds: 
\[\left(\Psh(M),\Aut_{CR}(M)\right)=\begin{cases}
\left({\rm U}(n+1), {\rm PU}(n+1,1)\right) & (M=S^{2n+1}) \, ,\\
\left( \cN \rtimes {\rm U}(n), \cN\rtimes ({\rm U}(n)\times \RR^{>0}) \right) & (M=\cN_{}) \, .
\end{cases}\]
\end{thm:intro}
\medskip

For a \emph{compact} strictly 
pseudo-convex $CR$-manifold $M$, this result  is originally 
due to Webster \cite{WE}, see also \cite{KA1},  and  \cite[Proposition 4.4]{BGS} for a related result in the context of  compact Sasaki manifolds, compare also \cite{OK}. For further background on spherical $CR$-manifolds see \cite{BS,KA3}. 


%

\medskip 
\paragraph{\em Conformal Riemannian manifolds}
When we replace $\om$ by a Riemannian metric $g$ on $M$, 
there is a conformal analogue of Theorem \ref{thm:main_CR}.
For this recall that a diffeomorphism $\alpha:  M \to M$,  which 
satisfies $$ \alpha^*g \, = \, f \cdot g \;  , $$
for some positive function $f \in \Cs(M,\RR^{>0})$, 
is called a conformal automorphism. 
Let $ \Iso(M,g)$ denote the group of isometries and ${\rm Conf}(M,g)$ the group of conformal 
automorphisms for the metric $g$. The conformal  automorphism group is a Lie group, since the $G$-structure underlying conformal geometry has its second prolongation vanishing \cite{Kob}, but in general it doesn't act properly on $M$. 

\smallskip 
Similarly as in the $CR$-case, we will prove that if ${\rm Conf}(M,g)$ acts properly on $M$ then there exists a Riemannian metric $h$ conformal to $g$ such that ${\rm Conf}(M,g)=\Iso(M,h)$. In fact, this  was first observed long time ago by D.V.\  Alekseevsky, see \cite[proof of Theorem 1]{Al}. 

\smallskip 
Together with the classification of conformal Riemannian manifolds with non-proper automorphisms groups completed by the proofs of J.\ Ferrand \cite{Ferrand} and R.\ Schoen \cite{SC},  this gives rise to the following (see also \cite{Al}):

\begin{thm:intro}\label{thm:main_conformal}
Let $(M,g)$ be a Riemannian manifold.
Then either one of the following holds:
\begin{enumerate}
\item[(i)] There exists a Riemannian metric $h$
conformal to $g$ 
such that
\[ {\rm Conf}(M,g)=\Iso(M,h).\]
\item[(ii)]
 $M$ is conformal to 
either the standard sphere $S^{n}$ or the Euclidean
flat space $\RR^n$.
\end{enumerate}
For {\rm (ii)}, it occurs
$$  \left( \Iso(M),{\rm Conf}(M) \right)= \begin{cases}
\left( {\rm O}(n+1), {\rm PO}(n+1,1) \right) & (M=S^{n}) \, ,\\
\left(\RR^n\rtimes {\rm O}(n), \RR^n\rtimes ({\rm O}(n)\times \RR^{>0}) \right) & (M=\RR^n) \, .
\end{cases}
$$ 
\end{thm:intro}

\medskip 
\paragraph{\em Conformal K\"ahler  manifolds}
We shall  also be concerned with the 
 holomorphic conformal deformation of \emph{K\"ahler  manifolds}. 
Let $ (X,\{g,J\})$ be a K\"ahler manifold, where $g$ is a K\"ahler metric and $J$ denotes the complex structure on $X$. We let $\Iso\left(X,\{g,J \}\right)$ denote the associated  group of \emph{holomorphic} isometries and 
 $\mathrm{Conf}(X,\{g,J\})$ 
the  holomorphic conformal group. In this context, we obtain: 
\smallskip 


\begin{thm:intro} \label{thm:main_lcK}
Let $(X,\{g,J\})$ be a K\"ahler manifold, $\dim_{\RR} X = 2n \geq 4$.
Then either one of the following holds:
\begin{enumerate}
\item[(i)] 
There exists a Hermitian manifold  $(X, \{h, J\})$ with   the  Hermitian metric $h$  
conformal to the K\"ahler metric $g$ and 
$$   \mathrm{Conf}(X,\{g,J\}) \, =  \, \Iso\left(X,\{h,J \}\right) \,  . $$
\item[(ii)]   $X$ is holomorphically isometric to $\CC^n$. In this case,    
$$ \left( \Iso(X,\{g,J\}),{\rm Conf}(X,\{g,J\}) \right)  =
\left(\CC^n\rtimes {\rm U}(n), \CC^n\rtimes ({\rm U}(n)\times \RR^{>0})\right). $$

\end{enumerate} 
%
\end{thm:intro}

Note that Theorem \ref{thm:main_lcK}  gives 
a strong \emph{metric} rigidity property while in Theorem \ref{thm:main_CR} and Theorem \ref{thm:main_conformal} we have rigidity only up to a conformal map. 

\smallskip 
The Hermitian metric $h$ in Theorem \ref{thm:main_lcK} is globally conformal to a K\"ahler metric.
There is an important class of non-K\"ahler complex manifolds which carry Hermitian metrics  \emph{locally} conformal to K\"ahler metrics, see \cite{DO,OV,Va}, for example Hopf manifolds $S^{3} \times S^{1}$ fall into this class. We apply Theorem \ref{thm:main_lcK} to prove the existence of  {locally conformal K\"ahler metrics  whose holomorphic isometry group is maximal among all Hermitian metrics 
in a given conformal class,  see the discussion in Section \ref{sec:lcK} for further details. 

\subsection {Structure of proofs}  \label{sec1.2}

Theorem  \ref{thm:main_CR}, respectively Theorem \ref{thm:main_conformal},  are based mainly on two ingredients. One corner stone are the following celebrated rigidity results 
which were obtained in their  final form by R. Schoen \cite{SC} 
(see also J. Lee \cite{JL}) and J.\ Ferrand \cite{Ferrand}: 

\subsubsection{Schoen's rigidity theorem and related results}

\begin{thm:intro}\label{thm:schoen}
Let $(M,\{\sfD,J\})$ be a strictly 
pseudo-convex $CR$-manifold of dimension $2n+1$.
Then $\Aut_{CR}(M,\{\sfD,J\})$ acts properly on $M$ unless
 $M$ has a spherical $CR$-structure isomorphic to
either the standard sphere $S^{2n+1}$ or the Heisenberg Lie group $\cN_{2n+1}$.
\end{thm:intro} 

A result  analogous to Theorem \ref{thm:schoen} holds for conformal Riemannian manifolds, see \cite[Theorems 3.3, 3.4]{SC} and \cite{Ferrand}:  

\begin{thm:intro}\label{thm:schoen2}
Let $(M,g)$ be a Riemannian manifold.  
Then $\Conf(M,g)$ acts properly on $M$, unless
$M$ is conformally diffeomorphic to 
either the standard round  sphere $S^{n}$ or Euclidean flat space $\RR^{n}$.
\end{thm:intro} 

In the case of K\"ahler manifolds of complex dimension at least two, we strengthen 
the above conformal rigidity 
to the following  holomorphic metric rigidity:

\begin{thm:intro} \label{thm:schoenhol} 
Let $(X,\{g,J\})$ be a K\"ahler manifold, with $\dim X=2n\geq 4$. Then the holomorphic conformal group ${\Conf}(X,\{g,J\})$ acts properly on $X$, unless $X$ is holomorphically isometric to the complex space $\CC^n$. 
\end{thm:intro}

The proof of Theorem \ref{thm:schoenhol} can be found in Appendix  \ref{proofA2}. It combines Schoen's conformal rigidity with classical rigidity results on conformally flat K\"ahler manifolds (see Appendix \ref{A1}). 


\subsubsection{Differentiable cohomology of Lie groups associated to $CR$ and conformal actions} \label{subsec:cohomology_crc}
Schoen's theorems  setting the stage, the main additional ingredient used 
in the proofs of Theorem \ref{thm:main_CR} and Theorem \ref{thm:main_conformal} 
is the following vanishing result for  the \emph{first  
differentiable cohomology group} of Lie groups: 


\begin{thm:intro} \label{thm:main_h1}
Let $G$ be a Lie group that acts smoothly and properly on a  manifold $M$.  Then 
$$ H^1_{d}\left(G, \C^{\infty}(M, \RR)\right)  = \, \zsp \, . $$
\end{thm:intro} 

In Theorem \ref{thm:main_h1} the coefficients for cohomology are taken in the   \emph{differentiable $G$-module} of  
smooth functions  $\Cs(M, \RR)$,  which has  the
$\Cs$-topology of maps and with the natural $G$-action on functions. 
The equivariant differentiable cohomology group 
 $H^1_{d}\left(G, \C^{\infty}(M, \RR)\right)$  
is  then a  topological vector space, 
whose elements may be represented by certain smooth maps from $G$ to the locally convex vector space $\C^{\infty}(M, \RR)$. 

\smallskip
\paragraph{\em Canonical cohomology class associated to $CR$-actions} 
To each $CR$-action of a Lie group $G$ 
%
there exists  in the differentiable cohomology of 
$G$ 
a natural associated class 
$$ \mu_{\mathrm{CR}} =  [\lambda_{\mathrm{CR}}]  \; \in \, H^1_d\left(G, \C^\infty(M,\RR^{>0})\right)$$ 
which is induced by the $CR$-structure on $M$. The class  $\mu_{\mathrm{CR}}$ vanishes if and only if there exists a contact form $\eta$ compatible with the $CR$-structure,  such that $G$ is contained in the group of pseudo-Hermitian transformations $\Psh(M,\{\eta,J\})$. See Proposition \ref{pro:can_class_CR} in Section \ref{sec:cr} for further details. 

\smallskip 
Together with Theorem  \ref{thm:main_h1} this  reasoning reveals the following strong relationship between  
the above type of equivariant cohomology group and the properness of 
actions on $CR$- and conformal manifolds: 

\begin{thm:intro} \label{thm:main_cr_and_h1} Let $G$ be a closed  Lie subgroup of diffeomorphisms that  preserves either a strictly pseudo-convex $CR$-structure or a conformal Riemannian structure on $M$. 
Then $G$ acts properly on $M$ if and only if the first differentiable cohomology group
$H^1_{d}\left(G, \C^{\infty}(M, \RR)\right)$ vanishes.  
\end{thm:intro}

Conversely, taking into account that 
$\SU(n+1,1) = \Aut_{CR}(S^{2n+1})$ does not act properly on $S^{2n+1}$,  
and similarly for the other geometric group actions appearing in (ii) of Theorem \ref{thm:main_CR} and Theorem \ref{thm:main_conformal}, it follows that  these give examples of actions of semisimple Lie groups whose associated first equivariant differentiable cohomology groups are non-vanishing: 

\begin{cor:intro} \label{cor:main_cr_and_h1} The following hold: 
\begin{eqnarray*} & H^1_{d}\left(\SU(n+1,1), \C^{\infty}(S^{2n+1}, \RR)\right)\,  \neq  \; \zsp  \, , \\
&  H^1_{d}\left(\SO(n+1,1), \C^{\infty}(S^{n}, \RR)\right)\,  \neq  \; \zsp  \,  .  
\end{eqnarray*} 
\end{cor:intro}

The proof of Theorem \ref{thm:main_cr_and_h1} is given  in Section \ref{sec:proper_and_coho}. 
In fact,  Theorem \ref{thm:proper_and_coho}} which can be found there gives a much stronger characterization of proper actions which is also involving vanishing of higher cohomology. 

\subsection{Actions on manifolds and differentiable cohomology of Lie groups}

The methods which apply to prove Theorem \ref{thm:main_h1}, are based on 
foundational ideas on continuous and differentiable cohomology originally developed in the works of Mostow and Mostow-Hochschild \cite{Mostow,HM}, and also on somewhat later expositions  \cite{Blanc, CW}. In particular, we use the slice theory of proper actions, integration over compact groups, and Shapiro Lemma type results for equivariant differentiable cohomology of groups with coefficients in function spaces. Finally, the language of sheaf cohomology is applied to pursue the necessary patching of local data to a global vanishing theorem, see Section \ref{sec:cont_cohomology} and Section \ref{sec:smooth_coefficients} for details. 

\subsubsection{Vanishing of differentiable cohomology associated to proper actions} \label{sec1.3}
In the most general form presented in
Section \ref{sec:smooth_coefficients}, the vanishing theorem 
asserts, that, for any smooth and 
proper action of $G$ on $X$ and any differentiable $G$-module $V$,
the 
differentiable cohomology module $$H^{*}_{d}\left(G, \Cs(X, V)\right)$$
is acyclic, that is, $H^{r}_{d}\left(G, \Cs(X, V)\right) = \zsp$, $r \geq 1$. 
\smallskip

With respect
to the trivial $G$-module $V= \RR$,  this amounts to:   

\begin{thm:intro} \label{thm:main_cohomology}
Let $G$ be a Lie group which acts smoothly and properly on a manifold $X$. Then the 
differentiable cohomology groups of $G$ satisfy 
$$ H^{r}_{d} \left( G, \C^{\infty}(X, \RR) \right)  \, = \, \zsp\, \text{ , for all $r \geq 1$.} $$ 
\end{thm:intro}
The proof of Theorem \ref{thm:main_cohomology} and its further generalizations will be given in Section \ref{sec:smooth_coefficients}. 

\medskip
\paragraph{\em Further applications} 
We would like to point out that vanishing results of the above type are interesting in their own right and potentially bear many applications to the study of proper actions of Lie groups on geometric manifolds. For example,  note that we do not assume that $G$ is connected, so that the theorems  also apply to properly discontinuous actions and the covering theory of manifolds. If $G$ is discrete, the differentiable cohomology groups of $G$ with coefficients in a differentiable module $V$ reduce to the ordinary discrete cohomology  groups of $G$ with coefficients $V$. Note further  that, in case $G$ is acting properly discontinuously on $X$, Theorem \ref{thm:main_cohomology} is  known due to seminal work of Conner-Raymond \cite{CR}, see also \cite[Chapter 7.5]{LR}. 
The equivariant cohomology theory associated with properly discontinuous actions of groups and applications of the corresponding vanishing results to the topology of manifolds are discussed and surveyed in the  book \cite{LR}.

\section{Equivariant continuous cohomology} \label{sec:cont_cohomology}
In this section 
we start by reviewing  several facts about continuous cohomology of 
locally compact groups (compare  \cite{CW}, and also  \cite[Chapter IX]{BW}). 
Based on this we establish vanishing results 
for certain equivariant continuous cohomology groups 
related to proper actions on locally compact spaces. In the proof of the vanishing result
we take a sheaf theoretic approach to equivariant cohomology.

\medskip
\paragraph{\em Conventions} 
All spaces $X$ and $Y$ are assumed to be locally compact (and Hausdorff). We let $\C(X,Y)$ denote the space of continuous maps with the 
compact open topology.

\subsection{Continuous cohomology of locally compact groups}
Let $G$  be a locally compact topological group and $V$ a continuous $G$-module.
By definition, $V$ is a topological abelian group with a continuous action of $G$ 
by automorphisms. In addition we shall always assume that \emph{$V$ is a Hausdorff locally convex topological real vector space.} 
An isomorphism of continuous
 $G$-modules is a $G$-equivariant isomorphism
(linear homeomorphism) of topological vector spaces. 

\subsubsection{Definition of continuous cohomology groups} \label{sect:cc_groups}
Denote by $$ C^r(G;V) :=  \;  \C(G^r,V)$$
 the $G$-module of continuous (inhomogeneous) $r$-cochains of $G$ into $V$, 
which consists of continuous maps of the $r$-fold product $G^r=G\times\cdots\times G$
into $V$. 

The inhomogeneous coboundary operator
\begin{equation*}
\partial^r: C^r(G;V) \; \lra \;  C^{r+1}(G;V) \, ,
\end{equation*}satisfying $\partial^{r+1}\circ \partial^r=0$, 
is defined via:
\begin{equation}\label{eq:partial} \begin{split}
\partial^0 (v)(\al)&=\al\cdot  v-v\ \, (\al\in G, v\in V) \, , \\
\smallskip
\partial^r \lambda(\al_1,\dots,\al_{r+1})&=
\al_1 \cdot  \lambda (\al_2,\dots,\al_{r+1})\\
&\ \ \ \ +\sum_{i=1}^r(-1)^i  \lambda(\al_1,\dots,
\al_i\al_{i+1},\dots,\al_{r+1})\\
& \ \ \ \ +(-1)^{r+1} \lambda(\al_1,\dots,\al_{r}).
\end{split}\end{equation}
Define the \emph{continuous cohomology groups}  
$$ \displaystyle H^r(G,V) := \, {\rm ker}(\partial^r)\big/\,  {\rm im}(\partial^{r-1}) \, , \,r \geq 1$$ and
put $$ H^0(G,V) := \, {\rm ker}(\partial^0) = V^{G} \; . $$ 


\subsubsection{Cohomology of compact groups and integration} 

Recall that the locally convex vector space 
$V$ is called \emph{quasi-complete} if all 
closed  bounded subsets of $V$ are complete. 
For quasi-complete $V$ integration of compactly supported  continuous functions in $\C(G,V)$ is defined with respect to a left-invariant measure on $G$. 
That is, $V$ is a $G$-integrable module in the sense 
of \cite[\S 3]{HM} (see also \cite[2.13]{Mostow}). 

\smallskip
\paragraph{\em Remark} For a survey concerning the existence of vector valued integrals  refer to 
 \cite{Cass}. 
 
\begin{pro}[\mbox{
\cite[Lemma 7]{CW}}] \label{compsta}
Suppose that $G$ is compact and\/ let $V$ be a quasi-complete $G$-module.  
Then $$ \displaystyle H^r \! \left(G,V \right) \,= \,  \zsp  \, , \; r\geq 1\; .$$ 
\end{pro}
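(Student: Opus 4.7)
The plan is to construct a contracting homotopy on the complex $\left(C^\bullet(G;V),\partial^\bullet\right)$ by averaging cochains against the normalized Haar measure of the compact group $G$. Concretely, I will produce continuous linear operators
\[ h^{r}: C^{r}(G;V) \; \lra \; C^{r-1}(G;V), \quad r \geq 1, \]
together with $h^{0}=0$, satisfying the chain homotopy identity
\[ \partial^{r-1}\circ h^{r} \, + \, h^{r+1}\circ \partial^{r} \, = \, \mathrm{id}_{C^{r}(G;V)} \quad (r\geq 1). \]
Once this is in place, any cocycle $\lambda\in C^{r}(G;V)$ with $\partial^{r}\lambda=0$ satisfies $\lambda = \partial^{r-1}(h^{r}\lambda)$, so $H^{r}(G,V)=\zsp$ for $r\geq 1$.

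First I would fix the normalized left-invariant Haar measure $dg$ on the compact group $G$ and verify that the vector-valued integral
\[ \int_{G} \varphi(g)\, dg \; \in \; V \]
exists for every continuous $\varphi:G\to V$. This uses two facts: the image $\varphi(G)$ is compact, hence bounded in $V$; and $V$ is Hausdorff locally convex and quasi-complete, so the Gelfand--Pettis (or Bochner type) integral of a compactly supported continuous $V$-valued function is well defined and behaves linearly and continuously with respect to $G$-action and continuous linear maps (this is the standard integrability framework of Hochschild--Mostow \cite{HM} recalled just before the statement). Having this, I define the averaging operator
\[ h^{r}(\lambda)(\alpha_{1},\dots,\alpha_{r-1}) \; := \; (-1)^{r}\! \int_{G} \lambda(\alpha_{1},\dots,\alpha_{r-1},g)\, dg, \]
which manifestly lies in $C^{r-1}(G;V)$ by continuity of $\lambda$ and continuous dependence of the integral on parameters.

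The main computation is then to verify the homotopy identity. I would plug $\partial^{r}\lambda$ into the defining formula \eqref{eq:partial} and integrate over $G$ the last variable $\alpha_{r+1}=g$. The term $\alpha_{1}\cdot \lambda(\alpha_{2},\dots,\alpha_{r},g)$ integrates to $\alpha_{1}\cdot h^{r}(\lambda)(\alpha_{2},\dots,\alpha_{r})$ up to sign; the internal product terms reassemble (again up to sign) into $\partial^{r-1}(h^{r}\lambda)(\alpha_{1},\dots,\alpha_{r})$; the term involving $\alpha_{r}g$ equals, by left-invariance of Haar measure, the term where $g$ replaces $\alpha_{r}g$, so it cancels against an adjacent contribution; and finally the term $(-1)^{r+1}\lambda(\alpha_{1},\dots,\alpha_{r})$ integrates trivially (since $G$ has total mass one) to $(-1)^{r+1}\lambda(\alpha_{1},\dots,\alpha_{r})$. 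Collecting signs one gets $\lambda(\alpha_{1},\dots,\alpha_{r})$ on the right-hand side and the identity follows. The case $r=1$ is a short direct check and serves as a sanity calibration for the signs.

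The genuinely subtle point is not the algebraic cocycle computation, which is essentially the classical proof for discrete groups with modules over $\QQ$, but rather the analytic justification that all manipulations are legitimate for a general quasi-complete locally convex $V$: that the parametrized integral defines a continuous map in the remaining variables, that left-invariance of Haar measure can be pushed inside the integral of a $V$-valued function, and that $G$-action commutes with integration. These are precisely the properties packaged into the notion of a $G$-integrable module in \cite[\S 3]{HM}, so I would invoke that formalism rather than redo the measure theoretic groundwork. Modulo this standard machinery, the proof reduces to the averaging identity sketched above.
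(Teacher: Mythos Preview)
Your proposal is correct and is essentially the same as the paper's proof: both define the averaging operator $\tau = h^{r}(\lambda)$ by integrating over the last variable against normalized Haar measure with the sign $(-1)^{r}$, and then use the resulting identity $\partial^{r-1}\tau = \lambda$ for cocycles. You go slightly further by phrasing this as a full contracting homotopy $\partial^{r-1}h^{r}+h^{r+1}\partial^{r}=\mathrm{id}$ and by spelling out the analytic justifications (quasi-completeness, left-invariance, commutation of action with integral), whereas the paper simply states the formula and the conclusion; but the method is identical.
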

\begin{proof} 
Let $\lambda \in C^{r}\left(G; V \right)$ satisfy $\partial^{r} \lambda = 0$. With respect to a normalized finite Haar
measure $d\alpha$ on $G$ define  
$$ \tau (\alpha_1,\dots,\alpha_{r-1})  =  (-1)^{r} \int_{G} \lambda(\alpha_1,\dots,\alpha_{r-1},
\alpha)   \, d \alpha  \; , $$ to obtain $\tau \in C^{r-1}(G; V)$, which
satisfies
\[  \partial^{r-1} \tau =   \lambda \; . \qedhere \]

%
\end{proof}

%

\subsubsection{Shapiro's lemma} 
Let $H$ be a closed subgroup of $G$ and let $W$ be an $H$-module. Put 
$$ {\rm Ind}_H^G \, W = \; \{f\in \C(G,W)\mid f(g h^{-1})=h \cdot f(g), \ g\in G, h\in H\}.$$
Then $ {\rm Ind}_H^G \, W$ turns into a continuous $G$-module, by the action  
$$  (\alpha \cdot f) \left(g\right) =  f \left(\alpha^{-1} g \right) \; . $$

\smallskip \noindent
Remark that $G/H$ is paracompact \cite[Ch III \S6, Proposition 13]{bourbaki}. Suppose that the quotient map 
$$ G \, \to \, G \big/ H $$ 
admits continuous local cross sections. Then we have:

\begin{pro}[Shapiro Lemma, \cf  \mbox{
\cite[Propositions 3,4]{CW}}] \label{Shapi} 
$$ H^r \! \left(H,W\right) \; \cong  \; H^r \! \left(G, \, {\rm Ind \,}_H^G \, W \right) \, ,  \,  r\geq 0 .  $$ 
\end{pro}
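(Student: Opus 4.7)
My plan is to construct mutually quasi-inverse cochain maps between the inhomogeneous complexes computing $H^{r}(G,\Ind_{H}^{G}W)$ and $H^{r}(H,W)$, and to deduce the asserted isomorphism from them. The first, easier ingredient is the \emph{restriction by evaluation at the identity},
\[ \rho^{r}\colon C^{r}(G;\Ind_{H}^{G}W) \longrightarrow C^{r}(H;W), \qquad \rho^{r}(\lambda)(h_{1},\dots,h_{r}) := \lambda(h_{1},\dots,h_{r})(e). \]
That $\rho^{\bullet}$ is a cochain map amounts to the statement that evaluation at $e \in G$ is a continuous $H$-equivariant linear map $\mathrm{ev}_{e}\colon \Ind_{H}^{G}W\to W$. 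This is immediate from the defining condition $f(gh^{-1}) = h\cdot f(g)$ specialized at $g=e$, which gives $(h\cdot f)(e) = f(h^{-1}) = h\cdot f(e)$; compatibility with $\partial^{\bullet}$ in \eqref{eq:partial} then follows by inspection.

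In the opposite direction I would build a \emph{section-induced} cochain map $\iota^{r}\colon C^{r}(H;W)\to C^{r}(G;\Ind_{H}^{G}W)$ from the hypothesis that $\pi\colon G\to G/H$ admits continuous local cross sections. Since $G/H$ is paracompact, fix a locally finite open cover $\{U_{i}\}$ of $G/H$ with continuous sections $\sigma_{i}\colon U_{i}\to G$ and a subordinate partition of unity $\{\varphi_{i}\}$. Each $\sigma_{i}$ yields a continuous local projection $\tau_{i}\colon \pi^{-1}(U_{i})\to H$ defined by $\tau_{i}(g):=\sigma_{i}(gH)^{-1}g$. For $\mu \in C^{r}(H;W)$ I would set $\iota^{r}(\mu)(\alpha_{1},\dots,\alpha_{r})(g)$ equal to a formula built from iterated $\tau_{i}$-projections along the sequence $g,\,g\alpha_{1},\dots,g\alpha_{1}\cdots\alpha_{r}$, glued via the $\varphi_{i}$. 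The crucial point is that the indeterminacy between different choices $\sigma_{i},\sigma_{j}$ on $U_{i}\cap U_{j}$ lies in $H$ and is absorbed precisely by the $H$-equivariance of $\mu$ together with the defining equivariance of $\Ind_{H}^{G}W$, so that $\iota^{r}(\mu)(\alpha_{1},\dots,\alpha_{r})$ is a globally continuous function on $G$ satisfying $f(gh^{-1})=h\cdot f(g)$.

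It then remains to verify that $\rho^{\bullet}\circ\iota^{\bullet}=\operatorname{id}$ on $C^{*}(H;W)$ (immediate from $\tau_{i}(h)=h$ on a neighbourhood of the identity coset with the canonical section), that $\iota^{\bullet}$ commutes with $\partial^{\bullet}$, and that $\iota^{\bullet}\circ\rho^{\bullet}$ is chain-homotopic to the identity on $C^{*}(G;\Ind_{H}^{G}W)$; the required homotopy is modeled on the standard bar-complex contracting homotopy, transported through the $\tau_{i}$. \emph{The main obstacle} is precisely this globalization step: the section-based formulas defining $\iota^{\bullet}$ and the contracting homotopy are only continuous on each $\pi^{-1}(U_{i})$, and one has to check that the partition-of-unity glueing produces genuinely continuous cochains with values in the locally convex space $\Ind_{H}^{G}W$ equipped with its compact-open topology. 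An alternative route I would keep in reserve is to pass to homogeneous cochains: by Frobenius reciprocity, $C^{r}_{\mathrm{hom}}(G;\Ind_{H}^{G}W)$ is isomorphic to the complex of continuous $H$-equivariant maps $G^{r+1}\to W$, and the claim reduces to showing that restriction along $H^{r+1}\hookrightarrow G^{r+1}$ is a quasi-isomorphism, which once again is exactly what the existence of local continuous cross sections supplies.
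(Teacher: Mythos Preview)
The paper does not supply its own proof of this proposition; it merely records the statement and cites Casselman--Wigner \cite[Propositions~3,~4]{CW}. So there is no in-paper argument to compare against.

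Your sketch follows one of the standard direct routes: build an explicit cochain map $\rho^{\bullet}$ by evaluation at $e$, and a section-based quasi-inverse $\iota^{\bullet}$ using local cross sections and a partition of unity on $G/H$. This is a legitimate strategy and is in the spirit of the cited reference, but as you yourself flag, the real work lies in writing down $\iota^{r}$ explicitly and the contracting homotopy, and verifying continuity of the glued cochains into $\Ind_{H}^{G}W$ with its compact-open topology; in your proposal these steps are only gestured at, not carried out. Your alternative via homogeneous cochains and Frobenius reciprocity is in fact the cleaner path and closer to how such results are usually packaged: one identifies $C^{r}(G;\Ind_{H}^{G}W)$ with $\C(G^{r+1},W)^{H}$ (this is an instance of the adjunction used repeatedly in the paper, cf.\ Lemma~\ref{lem:ind}), and then shows that the inclusion $H^{r+1}\hookrightarrow G^{r+1}$ induces a quasi-isomorphism by exhibiting an explicit homotopy built from a single continuous section $G/H\to G$ near the base coset, extended by the $G$-action. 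Either way, the existence of continuous local cross sections is exactly the hypothesis that makes the homotopy continuous, so your identification of the ``main obstacle'' is accurate.
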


\subsection{Equivariant cohomology groups} 
Let $X$ be a $G$-space and $V$ a continuous $G$-module. We consider the space of maps $$\C(X,V)$$  as  a continuous $G$-module,  where, for all $f \in  \C(X,V)$, $\alpha \in G$, we declare the action of $G$ as
\begin{equation}  \label{eq:induced_action}
 (\alpha \cdot f) \left(x\right) =  \alpha \cdot \left(f  \left(\alpha^{-1} \cdot x \right) \right) \; . 
\end{equation} 
We are interested in the  properties of the continuous cohomology of $G$ with coefficients in the $G$-module $\C(X,V)$.

\subsubsection{Associated bundle over $\, G/H$} Let $H$ be a closed subgroup of $G$. For any $H$-space $Y$ the diagonal action of $H$ on 
$G \times Y$ is   
\begin{equation} \label{eq:diagonal}
	h \cdot (g, y) = (g \, h^{-1}, \, h \cdot y) \, , \; \text{ for } h \in H.
\end{equation}
We then have the associated bundle with fiber $Y$ 
\begin{equation} \label{eq:abundle}
G \times_{H} Y \to \,  G/H \, . \end{equation} 
The space $G \times_{H} Y$ 
is defined by taking the  quotient 
of $G \times Y$ by the action \eqref{eq:diagonal}. 
Observe that $G$ acts on $G \times_{H} Y$ by left-multiplication on the first factor, which   
turns $G \times_{H} Y$ into a $G$-space and \eqref{eq:abundle} into a $G$-map. 

\smallskip 
\paragraph{\em Local cross sections} 
In order that \eqref{eq:abundle} is a fiber bundle we need to require that $G \to G/H$ is a fiber bundle: 
\begin{lemma} \label{lem:cfiberbundles} 
Suppose that $G/H$ admits local cross sections. Then:
\begin{enumerate}
\item The map $G \to G/H$ is a fiber bundle. 
\item The map $G \times_{H} Y \to \,  G/H$ is a fiber bundle.   
\item The quotient map  $G \times Y \to G \times_{H}  Y$ admits local cross sections. 
\end{enumerate}
\end{lemma}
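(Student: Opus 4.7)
\medskip

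The plan is to leverage a local cross section $s \colon U \to G$ of the quotient map $\pi \colon G \to G/H$ to produce explicit trivializations in all three cases; all three statements are essentially consequences of the single device that multiplication by $s(u)$ converts an ``abstract'' trivialization into a continuous one.

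For part (1), I would fix an open set $U \subset G/H$ on which a continuous section $s \colon U \to G$ exists and define
\[
	\Phi \colon U \times H \longrightarrow \pi^{-1}(U), \qquad \Phi(u,h) \, = \, s(u) \cdot h \, .
\]
This is continuous by joint continuity of multiplication, and its inverse is $g \mapsto (\pi(g), s(\pi(g))^{-1} g)$, which is continuous because $\pi$ and $s$ are. Hence $\Phi$ is a homeomorphism, and $H$ acts on the fiber of $\Phi$ by right-multiplication in the second coordinate. Translating $U$ across $G/H$ (using left multiplication by group elements to shift local sections) covers $G/H$ by such trivializing charts, showing that $\pi$ is a principal $H$-bundle.

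For part (2), with $s \colon U \to G$ as above, define
\[
	\Psi \colon U \times Y \longrightarrow p^{-1}(U), \qquad \Psi(u,y) \, = \, [s(u), y] \, ,
\]
where $p \colon G\times_H Y \to G/H$ is the projection. The map $\Psi$ is continuous as the composition of continuous maps with the quotient map $G \times Y \to G \times_H Y$. Its inverse is constructed as follows: given $[g, y] \in p^{-1}(U)$ with $u = \pi(g) \in U$, put $h = s(u)^{-1} g \in H$ and note
\[
	[g, y] \, = \, [s(u) h, y] \, = \, [s(u), h \cdot y] \, ;
\]
so the inverse is $[g,y] \mapsto (\pi(g), s(\pi(g))^{-1} g \cdot y)$, which is well-defined by a short check against the relation \eqref{eq:diagonal} and continuous for the same reasons as in part (1). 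Hence $p$ is a fiber bundle with fiber $Y$.

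For part (3), I would combine the previous trivializations: on $p^{-1}(U) \subset G \times_H Y$ define
\[
	\sigma \colon p^{-1}(U) \longrightarrow G \times Y, \qquad \sigma([g, y]) \, = \, \bigl(s(\pi(g)), \, s(\pi(g))^{-1} g \cdot y\bigr) \, .
\]
Well-definedness on equivalence classes is verified by direct substitution: if $(g', y') = (g h^{-1}, h \cdot y)$ then $\pi(g') = \pi(g)$ and $s(\pi(g))^{-1} g' \cdot y' = s(\pi(g))^{-1} g \cdot y$. Continuity follows from continuity of $s$, $\pi$, and the $H$-action. Finally, the composition with the quotient map $q \colon G \times Y \to G\times_H Y$ yields $q(\sigma([g,y])) = [s(\pi(g)), s(\pi(g))^{-1} g \cdot y] = [g,y]$ by the same identity used in part (2). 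Covering $G \times_H Y$ by such sets $p^{-1}(U)$ produces local cross sections everywhere. There is no serious obstacle here; the only care needed is in checking well-definedness of the inverse trivialization in (2) and the section in (3) on equivalence classes, which reduces to the definition of the diagonal $H$-action.
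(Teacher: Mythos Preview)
Your proposal is correct and follows essentially the same approach as the paper: the paper also uses the trivialization $g \mapsto (gH, s(gH)^{-1}g)$ for (1), the chart $(gH,y) \mapsto [s(gH),y]$ for (2), and then the section $(gH,y) \mapsto (s(gH),y)$ through that chart for (3), which unwinds to exactly your $\sigma$. You supply more detail on the inverse in (2) and on well-definedness in (3), whereas the paper is terser and cites Bredon for (2); otherwise the arguments coincide.
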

\begin{proof} Let $s:U \to G$ be a local section for $\pi: G \to G/H$. Then the  map $g \mapsto (g H), s(gH )^{-1} g)$, $\pi^{-1}(U) \to U \times H$ is an $H$-equivariant homeomorphism.   Hence, $\pi$ is an $H$-principal bundle over $G/H$. 

In the view of (1), (2) is implied by \cite[II Theorem 2.4]{Bredon_compact_trans}. In fact, for any section $s$ of $\pi$, the map $U \times Y \to G \times_{H}  Y$, $(gH, y) \mapsto [s(gH), y]$ defines a local bundle chart for \eqref{eq:abundle}. 

Furthermore, in such bundle chart $(gH, y) \mapsto (s(gH), y)$, $U \times Y \to G \times Y$ defines a local cross section for $G \times Y \to G \times_{H}  Y $, showing (3). 
\end{proof}

Put  $ \C \left(G \times  Y,V\right)^{H} =  \{ f: G \times Y \to V \mid f(g  h^{-1}, h  \cdot y) =   f(g,y) \}$
for the 
subspace of $H$-invariant functions in $ \C \left(G \times  Y, V\right)$.  

\begin{lemma} \label{lem:cpushfw} 
Suppose that $G/H$ admits local cross sections. 
Then the  natural map 
$$  \iota: \:   \C \left(G \times  Y,V\right)^{H}  \to  \C \left(G \times_{H}  Y,V\right)  $$ 
is a homeomorphism.
\end{lemma}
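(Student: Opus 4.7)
The map $\iota$ sends an $H$-invariant continuous function $f : G \times Y \to V$ to the unique function $\bar{f} : G \times_H Y \to V$ satisfying $f = \bar{f} \circ \pi$, where $\pi : G \times Y \to G \times_H Y$ denotes the quotient projection. Because $\pi$ is a topological quotient map, $\iota$ is a bijection, with inverse given by pullback $g \mapsto g \circ \pi$. The task therefore reduces to verifying bicontinuity with respect to the compact-open topologies, where $\C(G \times Y, V)^H$ carries the subspace topology inherited from $\C(G \times Y, V)$.

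For continuity of $\iota^{-1}$ the argument is immediate. A sub-basic open set $N(K', U) \cap \C(G \times Y, V)^H$, with $K' \subset G \times Y$ compact and $U \subset V$ open, pulls back under $\iota^{-1}$ to $N(\pi(K'), U)$, which is open in $\C(G \times_H Y, V)$ because $\pi(K')$ is compact.

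The substantive direction is continuity of $\iota$. Given a sub-basic open set $N(K, U)$ with $K \subset G \times_H Y$ compact, the preimage under $\iota$ consists of those $H$-invariant $f$ with $f(\pi^{-1}(K)) \subset U$. The saturation $\pi^{-1}(K)$ is in general not compact, so this is not obviously a sub-basic open of the compact-open topology on $\C(G \times Y, V)$. Here $H$-invariance saves the day: for any subset $K^{\sharp} \subset G \times Y$ with $\pi(K^{\sharp}) = K$ one has $\pi^{-1}(K) = H \cdot K^{\sharp}$, hence $f(\pi^{-1}(K)) = f(K^{\sharp})$ by invariance. Choosing $K^{\sharp}$ compact would then identify the preimage with $N(K^{\sharp}, U) \cap \C(G \times Y, V)^H$, which is open in the subspace topology.

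Producing such a compact lift $K^{\sharp}$ is the one nontrivial step, and is where the hypothesis on local cross sections enters. Invoking Lemma \ref{lem:cfiberbundles}(3), one covers $K$ by finitely many open sets $V_i \subset G \times_H Y$ on which continuous sections $s_i : V_i \to G \times Y$ of $\pi$ exist. Since $G \times_H Y$ is locally compact Hausdorff (being a fiber bundle over the locally compact Hausdorff space $G/H$ with locally compact Hausdorff fiber $Y$), a standard shrinking argument refines this to compact subsets $K_i \subset V_i$ with $K = \bigcup_i K_i$; then $K^{\sharp} := \bigcup_i s_i(K_i)$ is compact and satisfies $\pi(K^{\sharp}) = K$. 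The main obstacle is precisely the construction of this compact lift: absent local cross sections, there is no mechanism to match the compact-open topologies on the two sides, since the saturation of a compact set under a group action need not remain bounded.
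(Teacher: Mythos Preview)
Your proof is correct and follows essentially the same approach as the paper: both argue continuity of $\iota^{-1}$ via the fact that $\pi$ maps compacta to compacta, and both establish continuity of $\iota$ by using the local sections from Lemma~\ref{lem:cfiberbundles}(3) together with local compactness of $G\times_H Y$ to build a compact lift $K^\sharp$ of a given compact $K$, then exploit $H$-invariance to replace $\pi^{-1}(K)$ by $K^\sharp$. The only cosmetic difference is that the paper covers $K$ directly by compact neighborhoods admitting sections, whereas you first cover by open sets and then shrink; the substance is identical.
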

\begin{proof}
Given $ \tilde f \in  \C \left(G \times  Y,V\right)^{H} \!$, $\iota(\tilde f) = f$
is the unique map satisfying  the relation 
$\tilde f = f \circ \mathsf{p}$, where $\mathsf{p}: G \times Y \to G \times_{H}  Y $ is the quotient map.
Now, for any  $C \subseteq  G \times_{H}  Y$ compact and $U \subseteq V$ open with $f(C) \subseteq U$, consider  $$\cN(C,U) = \{ \ell :  G \times_{H}  Y  \to V \mid \ell(C) \subseteq U \}$$ which is  an open neighborhood of $f$ in $\C \left(G \times_{H}  Y,V\right)$. By (3) of Lemma \ref{lem:cfiberbundles},  $\mathsf{p}:  G \times Y \to G \times_{H}  Y$ admits local cross sections. Since $G \times_{H}  Y$ is locally compact, we may 
cover $C$ with finitely many compact neighborhoods $C_{i}$ such that
$\mathsf{p}$ admits a section $s_{i}$ over $C_{i}$. Then put $\tilde C = \bigcup_{i} s_{i}(C_i \cap C)$. 
Hence,  $\tilde f \in  \cN(\tilde C, U)$ and,  for any $\tilde \ell  \in \cN(\tilde C, U) \cap  \C \left(G \times  Y,V\right)^{H}$, 
we have that $\iota(\tilde \ell) \in \cN(C,U)$. Hence, $\iota$ is continuous. 

Since $G \times_{H}  Y$ is locally compact Hausdorff, the inverse map
$$ \iota^{-1} \! : \; \C \left(G \times_{H}  Y,V\right)  \to  \C \left(G \times  Y,V\right)^{H}, \;  f \mapsto \tilde f = f \circ \pi$$
is clearly continuous (with respect to the compact open topology).
\end{proof}

\subsubsection{Adjointness properties of $\Ind_{H}^{G}$}
Let $V$ be a continuous $G$-module.
Since $H$ is a subgroup of $G$, the $G$-module $V$ is an $H$-module by restriction. The latter will be denoted  by $\res_H^G \, V$. 
Similarly, if $X$ is a $G$-space, we 
let $\res_{H}^{G} X$ denote the restricted $H$-space. 

\begin{lemma} \label{lem:ind} 
Let $V$ be a continuous $G$-module. 
There is a natural  isomorphism of continuous $G$-modules 
\begin{equation} \label{eq:ind}
\mu: \,  {\rm Ind}_{H}^{G} \, \C \left(Y,\res_H^G \,  V\right)  \, \to  \; \C \left(G \times_{H} Y,V\right) \;   .  
\end{equation}
Similarly, if $X$ is a $G$-space and $W$ an $H$-module, then  
there is a natural  isomorphism of continuous $G$-modules 
\begin{equation} \label{eq:ind2}  
\nu:  \C\left( X, \Ind^{G}_{H} \, W \right)  \, \to \,  \Ind^{G}_{H} \;   \C \left( \res_H^G \, X, W \right) \; . 
\end{equation}
\end{lemma}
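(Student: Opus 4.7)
The plan is to write down explicit formulas for $\mu$ and $\nu$, verify well-definedness and the defining equivariance conditions algebraically, and finally to deduce that each map is a homeomorphism by invoking the exponential law for continuous functions on locally compact spaces together with Lemma \ref{lem:cpushfw} in the case of $\mu$.

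For $\mu$, given $f \in \Ind_H^G \, \C(Y, \res_H^G V)$ satisfying $f(gh^{-1})(y) = h \cdot f(g)(h^{-1}y)$, I would set
\[
\mu(f)([g,y]) \, = \, g \cdot f(g)(y).
\]
The equivariance of $f$ immediately yields
$\mu(f)([gh^{-1},hy]) = (gh^{-1})\cdot f(gh^{-1})(hy) = (gh^{-1}) h \cdot f(g)(y) = g \cdot f(g)(y)$,
so $\mu(f)$ descends to $G \times_H Y$. The inverse sends $F$ to the map $g \mapsto ( y \mapsto g^{-1}\cdot F([g,y]) )$; using the identity $[gh^{-1},y] = [g, h^{-1}y]$ in $G \times_H Y$, the Ind-equivariance of this map is an immediate calculation. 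The $G$-equivariance of $\mu$ for left multiplication of $G$ on the first factor of $G \times_H Y$ is clear from the formula. To see that $\mu$ is a homeomorphism, I would identify $\C(G \times_H Y, V)$ with the $H$-invariant subspace $\C(G \times Y, V)^H$ via Lemma \ref{lem:cpushfw}, apply the exponential law $\C(G \times Y, V) \cong \C(G, \C(Y, V))$ (a homeomorphism since $G$ and $Y$ are locally compact Hausdorff), and observe that the twist $\tilde F(g, y) = g \cdot f(g)(y)$ is a continuous automorphism with continuous inverse because the $G$-action on $V$ is continuous.

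For $\nu$, I would set $\nu(\psi)(g)(x) = \psi(g \cdot x)(g)$, with inverse $F \mapsto (x \mapsto (g \mapsto F(g)(g^{-1}\cdot x)))$. The defining condition $\nu(\psi)(gh^{-1})(x) = h \cdot \nu(\psi)(g)(h^{-1}x)$ reduces at once to the Ind-equivariance of $\psi(gh^{-1}x) \in \Ind_H^G W$ evaluated at $g$; $G$-equivariance for the action \eqref{eq:induced_action} and continuity in both directions follow directly from the exponential law applied to $\C(G \times X, W)$, without any quotient step being required.

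The main obstacle is really an accounting one: one must keep track of the twist by $g\cdot$ in the construction of $\mu$, since it is precisely this twist that converts the symmetric $H$-invariance $\tilde F(gh^{-1}, hy) = \tilde F(g,y)$ coming from $\C(G \times Y,V)^H$ into the asymmetric equivariance $f(gh^{-1}) = h \cdot f(g)$ characterizing $\Ind_H^G$. All topological prerequisites for the exponential law hold by the standing local compactness assumption, and Lemma \ref{lem:cfiberbundles} guarantees that $G \times_H Y$ is a genuine fiber bundle over the paracompact base $G/H$, which is exactly what makes the passage between the quotient and the invariant subspace behave well at the level of the compact-open topology.
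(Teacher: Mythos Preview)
Your proposal is correct and follows essentially the same route as the paper: the paper defines $\bar\mu(f)(g,y)=g\cdot f(g)(y)$, checks $H$-invariance so that it descends to $G\times_H Y$, and then obtains the homeomorphism by combining the exponential law $\C(G,\C(Y,V))\cong\C(G\times Y,V)$ with Lemma~\ref{lem:cpushfw}; for \eqref{eq:ind2} it simply says ``similarly, using the adjointness \eqref{eq:adjointc}''. Your explicit inverse formulas and your remark that the twist $F\mapsto\bigl((g,y)\mapsto g\cdot F(g,y)\bigr)$ is a self-homeomorphism of $\C(G\times Y,V)$ make one step precise that the paper leaves implicit, but the argument is otherwise identical.
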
   

\begin{proof} 
Given $f \in {\rm Ind}_H^G\, \C(Y, \res_{H}^{G} V)  \subseteq \C(G, C(Y,V))$,  we define 
\begin{equation} \label{eq:corresp_maps}
\bar \mu(f)   \in  \C(G \times Y, V) \;  \text{,  via }
\bar \mu(f) (g,y) = g \cdot \left( \, f(g) \left(y\right) \, \right)   \; . \end{equation} 
Since $f$ is  $H$-equivariant  (by definition of  ${\rm Ind}_{H}^{G}$), it follows $$ \bar \mu(f)  \left(  h \cdot (g, y) \right) =   \; \bar \mu(f)  \left( g, y  \right) \; . $$
That is, $\bar \mu(f)$  is $H$-invariant for the diagonal action \eqref{eq:diagonal} and  descends to  
$$\mu(f)\in \C \left( G \times_{H}  Y, V  \right). $$ Conversely, every element of  $\C \left( G \times_{H}  Y, V  \right)$ arises in this way. 
One can verify easily that $\mu$ is $G$-equivariant, and also that it is a linear map. 
It remains to show that $\mu$ is a homeomorphism. 

Note first,  since $G$ is Hausdorff and $Y$ is locally compact, 
the natural map 
\begin{equation}   \label{eq:adjointc} \C(G, \C(Y,V)) \to \C(G \times Y, V) \end{equation}
is a homeomorphism of function spaces with respect to the compact open
topologies \cite[Ch X \S 3, Corollaire 2]{bourbaki}. 
It follows that $$ \bar \mu:  {\rm Ind}_H^G\, \C(Y,V) \to  \C \left(G \times  Y,V\right)^{H}$$ gives a homeomorphism. 
Finally, the push forward map $ \C \left(G \times  Y,V\right)^{H} \to \C \left( G \times_{H}  Y, V  \right)$ 
is a homeomorphism, by Lemma \ref{lem:cpushfw}. Therefore, $\mu$ is a homeomorphism.
 This proves \eqref{eq:ind}.
 
The proof of \eqref{eq:ind2} follows similarly, using the adjointness  \eqref{eq:adjointc}.
\end{proof}

\subsubsection{Slices, tubes  and induced representations} 
Let $X$ be a $G$-space and $p \in X$. Put $G_p =\{ g \in G \mid g \cdot p  =  p \}$.  
Let $\cS_{p}$ be a $G_{p}$-invariant locally closed subset of $X$ with 
$p \in \cS_{p}$. Then $\cS_{p}$ is called a \emph{slice} for the $G$-action on $X$ if $$ U_{p} = G \cdot \cS_{p}$$  is an open subset of $X$ and the map 
\begin{equation} \label{eq:tube}
 G \times_{G_{p}} \cS_{p} \,  \to \, U_{p} \; ,  \;   (g, s) \mapsto g \cdot s
\end{equation} 
is a homeomorphism. If a slice exists then $U_{p}$ is called a \emph{tube} around the orbit $G \cdot p$. 

For any \emph{$G$-invariant open subset} $ U \subseteq  U_{p}$, $p \in U$,  
we define  $$ \cS_{U} = U \cap \cS_{p} \, . $$ 
Then $\cS_U$ is a $G_p$-space. For any $G$-module $V$,  since $U$ is a $G$-space, $\C(U,V)$ is a $G$-module. 

\begin{lemma}[$G$-tubes] \label{lem:tubesc}
There is a natural  isomorphism of continuous $G$-modules 
\begin{equation} \label{eq:ind_tube}
{\rm Ind}_{G_{p}}^G \C(\cS_{U}, \, \res_{G_p}^G \! V)  \, \lra  \, \;  \C(U,V) \,   .  
\end{equation}
\end{lemma}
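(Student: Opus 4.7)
The plan is to reduce the statement to the previously established isomorphism \eqref{eq:ind} of Lemma \ref{lem:ind} by identifying the $G$-space $U$ with the associated bundle $G \times_{G_{p}} \cS_{U}$.

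First I would check that the slice homeomorphism \eqref{eq:tube} restricts to a $G$-equivariant homeomorphism
\[ \Phi \colon G \times_{G_{p}} \cS_{U} \, \lra \, U. \]
The key observation is that $G \cdot \cS_{U} = U$: since $U \subseteq U_{p}$, any $x \in U$ may be written as $x = g \cdot s$ with $g \in G$ and $s \in \cS_{p}$, and the $G$-invariance of $U$ then forces $s = g^{-1} \cdot x \in U \cap \cS_{p} = \cS_{U}$. The reverse inclusion $G \cdot \cS_{U} \subseteq U$ holds by $G$-invariance of $U$, so $\Phi$ is surjective. Injectivity and the homeomorphism property are inherited from \eqref{eq:tube}, since $\Phi$ is the restriction of that homeomorphism to the saturated open preimage of $U$.

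Next I would invoke \eqref{eq:ind} with $H = G_{p}$ and $Y = \cS_{U}$ to obtain a natural isomorphism of continuous $G$-modules
\[ \mu \colon \Ind_{G_{p}}^{G} \C(\cS_{U}, \res_{G_{p}}^{G} V) \, \lra \, \C\bigl(G \times_{G_{p}} \cS_{U},\, V\bigr). \]
Composing with the pullback map $\C(\Phi, V) \colon \C(U, V) \to \C(G \times_{G_{p}} \cS_{U}, V)$, which is a $G$-equivariant linear homeomorphism because $\Phi$ is a $G$-equivariant homeomorphism, yields the desired natural isomorphism $\Ind_{G_{p}}^{G} \C(\cS_{U}, \res_{G_{p}}^{G} V) \cong \C(U, V)$ of continuous $G$-modules.

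I expect the only subtle point to be verifying that $\Phi$ is a homeomorphism, not just a continuous bijection; but this is essentially automatic, since \eqref{eq:tube} is already a homeomorphism and $\Phi$ is its restriction to the preimage of the open set $U$. The hypothesis that $G/G_{p}$ admits local cross sections, needed to apply Lemma \ref{lem:ind}, is built into the existence of the slice $\cS_{p}$ (the map $g \mapsto g \cdot p$ composed with a local inverse of \eqref{eq:tube} near $p$ produces such sections). After these identifications the statement is simply a rephrasing of \eqref{eq:ind} through the slice homeomorphism.
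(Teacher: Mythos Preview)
Your proposal is correct and follows exactly the paper's approach: identify $U$ with $G \times_{G_{p}} \cS_{U}$ via the restricted slice homeomorphism \eqref{eq:slice_homeo}, then apply Lemma \ref{lem:ind} with $H = G_{p}$ and $Y = \cS_{U}$. You supply more detail on surjectivity of the restricted map than the paper does; your side remark that the slice itself furnishes local cross sections of $G \to G/G_{p}$ is not quite justified (the slice homeomorphism only identifies $G/G_{p}$ with the orbit, it does not lift to $G$), but the paper likewise treats this as an ambient hypothesis rather than deriving it here.
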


\begin{proof} Since $\cS_{p}$ is a slice at $p$, the 
natural map \eqref{eq:tube} is a  $G$-equivariant  homeomorphism, 
and so are the restricted maps \begin{equation} \label{eq:slice_homeo}
G \times_{G_{p}} \, \cS_{U} \to U \; .  
\end{equation}
In particular, $U$ is a $G$-tube around $G \cdot p$. In view of the homeomorphism \eqref{eq:slice_homeo}, the claim follows from Lemma \ref{lem:ind}, by taking $H = G_p$.
\end{proof}

We conclude: 

\begin{lemma}[Local vanishing] \label{lem:local_vanish}
Suppose there exists a slice $S_{p}$ for $p \in X$, such that $G_{p}$ 
is compact and $G \to G/G_{p}$  has local cross sections. Let $V$ be a quasi-complete continuous $G$-module.  
Then, for all sufficiently small 
$G$-invariant neighbourhoods $U$ of $p$,  
\begin{equation}\label{eq:smallvanish}
H^r \! \left(G,\C(U,V)\right)= \zsp  \, ,  \;  \,r\geq 1  \, .
\end{equation}
\end{lemma}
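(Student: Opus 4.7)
The plan is to directly assemble three results already at our disposal: the $G$-tube identification (Lemma \ref{lem:tubesc}), the Shapiro lemma (Proposition \ref{Shapi}), and the vanishing of continuous cohomology of compact groups with quasi-complete coefficients (Proposition \ref{compsta}).

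First, I would clarify the meaning of ``sufficiently small'': take $U$ to be any $G$-invariant open neighborhood of $p$ contained in the tube $U_{p} = G \cdot \cS_{p}$. Such neighborhoods form a fundamental system of $G$-invariant neighborhoods of $G \cdot p$ inside $U_{p}$ (for instance, $U = G \cdot V$ for $V$ any $G_{p}$-invariant open neighborhood of $p$ in $\cS_{p}$, which exists by averaging over the compact group $G_{p}$). Then Lemma \ref{lem:tubesc} produces a natural isomorphism of continuous $G$-modules
\[
\C(U,V) \;\cong\; {\rm Ind}_{G_{p}}^{G}\, \C(\cS_{U},\, \res_{G_{p}}^{G} V),
\]
where $\cS_{U}= U\cap \cS_{p}$.

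Second, I would invoke Shapiro's lemma (Proposition \ref{Shapi}), whose hypothesis on local cross sections for $G \to G/G_{p}$ is given, to obtain
\[
H^{r}\!\left(G,\, \C(U,V)\right) \;\cong\; H^{r}\!\left(G_{p},\, \C(\cS_{U},\, \res_{G_{p}}^{G} V)\right), \quad r \geq 0.
\]
Third, since $G_{p}$ is compact, Proposition \ref{compsta} forces the right-hand side to vanish in all degrees $r \geq 1$, provided that the $G_{p}$-module $\C(\cS_{U}, V)$ is quasi-complete in its compact-open topology.

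The only non-formal point, and the step I would flag as the main obstacle, is verifying this quasi-completeness of $\C(\cS_{U},V)$. Since $\cS_{U}$ is locally compact Hausdorff and $V$ is quasi-complete and locally convex, this is standard: a closed bounded net in $\C(\cS_{U},V)$ in the compact-open topology evaluates on each compact $K \subseteq \cS_{U}$ to a bounded net in $\C(K,V)$, which is quasi-complete because $V$ is; the pointwise-limiting function is then continuous by the usual uniform convergence on compacta argument. With this verified, Proposition \ref{compsta} applies and the chain of isomorphisms yields \eqref{eq:smallvanish}.
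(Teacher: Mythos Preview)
Your proposal is correct and follows essentially the same route as the paper: identify $\C(U,V)$ with an induced module via the $G$-tube lemma, apply Shapiro's lemma to pass to $G_{p}$-cohomology with coefficients $\C(\cS_{U},V)$, then use compactness of $G_{p}$ together with quasi-completeness of $\C(\cS_{U},V)$ to invoke Proposition~\ref{compsta}. The paper also singles out the quasi-completeness of $\C(\cS_{U},V)$ (citing \cite[Proposition~3.1]{HM} for integrability), exactly the step you flagged.
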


\begin{proof} 
By Lemma \ref{lem:tubesc}, $\C(U,V) =  {\rm Ind}_{G_{p}}^G  \C(\cS_{U}  , \res_{G_p}^G V)$. 
Shapiro's lemma (Proposition \ref{Shapi}) states that 
\begin{equation*}
 H^r \! \left( G, \, {\rm Ind}_{G_{p}}^G  \C(\cS_{U},\res_{G_p}^G V) \right) = \,  
H^r \! \left (G_p, \, \C(\cS_{U},\res_{G_p}^G V) \right) \; . 
\end{equation*}

As $V$ is quasi-complete 
also $\C(S_{U},V)$ is quasi-complete.
The module $\C(S_{U},V)$ is therefore $G_{p}$-integrable 
 (see in particular \cite[Proposition 3.1]{HM}). 
As $G_p$ is compact it follows from Proposition \ref{compsta} that 
 \[ 
\displaystyle H^r\!  \left(G_p, \, \C(\cS_{U},\res_{G_p}^G V) \right)= \zsp \, , \; r\geq 1.
\qedhere \] 
\end{proof}  

\subsection{Sheaf theoretic interpretation of equivariant cohomology} \label{sect:sheaves_c}
Let $X$ be a $G$-space and let $V$ be a continuous $G$-module. 
We let 
 $$\pi: X \to X/G$$  denote the quotient map for the $G$-action.  
%
Furthermore,  let
$$  \C_{X,V}  = \C( \cdot, V)$$  
denote the sheaf of continuous functions on $X$ with values in 
$V$,  as well as,  $$ \C_{X}  \text{ and } \C_{X/G}$$  the structure 
sheaves of continuous real valued functions on $X$ and $X/G$,  respectively. 
Since $G$ acts on $X$ and $V$, 
$\C_{X,V}$  is a $G$-sheaf. That is, the sheaf $\C_{X,V}$ has an action of $G$ by co-morphisms which is defined 
by  \eqref{eq:induced_action}. Remark further that  
$ \C_{X,V}$ is also a sheaf of  $\C_{X}$-modules. 
(For general background on sheaf theory, see eg.\  \cite[Chapter II]{Wells} 
or \cite{Bredon_sheaves}. For the notion of $G$-sheaves, see \cite[Chapitre V]{tohoku2}.) 

\smallskip 
\paragraph{\em Direct image sheaves} 
Let  $\cA$ denote any sheaf on $X$. The direct image $\pi_{*} \cA$ of $\cA$ is the sheaf on
$X/G$, where,  for any open subset $U$ of  $X/G$,
$$\pi_{*} \cA \left(U\right )  \, = \, \cA \left(\pi^{-1}(U)\right) \, . $$
We may view $\pi_{*}$ as a left-exact functor taking $G$-sheaves on $X$ to $G$-sheaves on 
$X/G$ (where $X/G$ has the trivial $G$-action). Of particular interest is the direct image 
sheaf of rings $\pi_{*} \,  \C_{X}$. Its subsheaf  $$\pi_{*}^{G} \,  \C_{X}$$  of
 $G$-invariant functions is called the  \emph{equivariant direct image} of $\C_{X}$. Note that 
 the sheaf $\pi_{*}^{G} \,  \C_{X}$ is canonically isomorphic to the sheaf $\C_{X/G}$
 of continuous functions on $X/G$. 

\smallskip 
\paragraph{\em Resolution of structure sheaves on $X/G$} 
We declare a differential sheaf 
\begin{equation} \label{eq:barresolution_c}
 C^{0}_{X,V}  \stackrel{\partial^{0}}{\lra} \, C^{1}_{X,V}   \stackrel{\partial^{1}}{\lra}\,  C^{2}_{X,V} \stackrel{\partial^{2}}{\lra}  \ldots  \; , 
\end{equation} 
of  $\pi_{*} \,  \C_{X}$-modules on $X/G$. The module of sections of  $C^{r}_{X,V}$ over an open subset $U \subseteq X/G$ is  defined as 
$$   C^{r}_{X,V}\left( U \right) =  \, C^{r} \! \left(G; \,  \C(\pi^{-1}(U), V) \right) \,  . $$ 
Here $\C(\pi^{-1}(U),V)$ is a $G$-module and,  by definition, $C^{r}_{X,V}\left( U \right)$  consists of the inhomogeneous $r$-cochains with coefficients  $\C(\pi^{-1}(U),V)$.  The differential 
$$  C^{r}_{X,V}\left( U \right) \,   \stackrel{\partial^{r}}{\lra}\,  C^{r+1}_{X,V}\left( U \right) $$ 
is obtained by the usual formula \eqref{eq:partial} for inhomogeneous cochains. It is trivial to verify that these local  maps 
patch together to define a homomorphism of sheaves $$ \partial^{r}: C^{r}_{X,V}  \, {\lra}\,   C^{r+1}_{X,V} \, . $$  

\smallskip
\paragraph{\em Equivariant direct image of\/ $\C_{X,V}$}
We remark  that  $C^{0}_{X,V} =  \pi_{*} \, \C_{X,V}$ is a sheaf of $G$-modules, 
since $ C^{0}_{X,V}(U) =  \C_{X,V}(\pi^{-1}(U))$.   
The subsheaf  
$$\pi_{*}^G \, \C_{X,V} $$
of \emph{$G$-invariant}  functions is generated by the presheaf
$$   U \, \mapsto  \; 
\C(\pi^{-1}(U),V)^{G}  \; . $$ Thus, we note that  
$$    \pi_{*}^G \, \C_{X,V}   = \ker \partial^{0} \; . $$ 

\smallskip
\paragraph{\em Remark} Observe that $\pi_{*}^G \, \C_{X,V}$ is not necessarily a sheaf of functions on $X/G$, unless the action of $G$ on $V$ is trivial or $G$ acts freely on $X$. (In case $V$ is a trivial $G$-module, we may identify $\pi_{*}^G \, \C_{X,V}$ with the structure sheaf $ \C_{X/G,V}$ of continuous functions on $X/G$ taking values in $V$.)

\medskip 
\paragraph{\em Local vanishing of continuous cohomology}
Suppose that, for all sufficiently small open neighbourhoods $U$ on $X/G$,    
we have 
\begin{equation}  \label{eq:local_v}
 H^r \left(G,\, \C(\pi^{-1}(U),V)\right)= \zsp \,  , \, \text{$r\geq 1$}  \, . 
  \end{equation}
Then we shall say that the  continuous cohomology groups of $G$ 
with coefficients $\pi_{*} \, \C_{X,V}$ \emph{vanish locally}.
This  is clearly equivalent to the condition that the sequence \eqref{eq:barresolution_c} 
is an exact sequence of sheaves: 
 

\smallskip

\begin{lemma} \label{lem:exact_c}
If the  continuous cohomology groups of $G$ with coefficients $\pi_{*} \, \C_{X,V} $ vanish locally  then 
\begin{equation} \label{eq:resolution}
\zsp  \, \lra  \,   \pi_{*}^G \, \C_{X,V}   \lra  \, C^{0}_{X,V}  \stackrel{\partial^{0}}{\lra} \, C^{1}_{X,V}   \stackrel{\partial^{1}}{\lra}\,  C^{2}_{X,V} \stackrel{\partial^{2}}{\lra}  \ldots  
\end{equation} 
is an exact sequence of sheaves  on $X/G$. 
\end{lemma}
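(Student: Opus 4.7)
The plan is to verify exactness of the candidate resolution stalkwise on $X/G$, using the general principle that exactness of a sequence of sheaves may be checked on stalks, equivalently on sections over a neighborhood basis at each point. The local vanishing hypothesis is tailored precisely to this situation: it tells us that every $\bar x \in X/G$ admits a fundamental system of open neighborhoods $U$ for which the section complex $C^r\!\left(G;\C(\pi^{-1}(U),V)\right)$ has vanishing cohomology in positive degrees, which will then force exactness of the sheaf sequence at those stalks.

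Injectivity of $\pi_*^G \C_{X,V} \hookrightarrow C^0_{X,V}$ is automatic, as the former is by construction a subsheaf of the latter. Exactness at $C^0_{X,V}$ has already been recorded in the remark preceding the lemma: the formula $\partial^0 f(\alpha) = \alpha \cdot f - f$ identifies $\ker\partial^0$ with the presheaf of $G$-invariant sections, which sheafifies to $\pi_*^G \C_{X,V}$. So no extra argument is needed in degree $0$.

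The substantive step is exactness at $C^r_{X,V}$ for $r \geq 1$. I would fix $\bar x \in X/G$ and a germ $[\sigma]_{\bar x} \in \ker(\partial^r)_{\bar x}$, choose a representative cocycle $\sigma \in C^r\!\left(G;\C(\pi^{-1}(U),V)\right)$ on some open neighborhood $U$ of $\bar x$, and then shrink $U$ to a smaller $U' \ni \bar x$ drawn from the basis provided by local vanishing, so that
\[
H^r\!\left(G,\C(\pi^{-1}(U'),V)\right)=\zsp .
\]
The restricted cocycle $\sigma|_{U'}$ is then a coboundary: there exists $\tau \in C^{r-1}\!\left(G;\C(\pi^{-1}(U'),V)\right)$ with $\partial^{r-1}\tau = \sigma|_{U'}$. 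Passing to germs at $\bar x$ yields $[\sigma]_{\bar x} = \partial^{r-1}[\tau]_{\bar x}$, which establishes exactness at the stalk, and hence of the sheaf sequence.

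No step here carries genuine difficulty; the only bookkeeping to check is that the sheaf restriction $C^r_{X,V}(U) \to C^r_{X,V}(U')$ for $U' \subseteq U$ in $X/G$ is indeed induced by restriction of $V$-valued continuous functions from $\pi^{-1}(U)$ to $\pi^{-1}(U')$, and that this restriction commutes with the inhomogeneous coboundary $\partial^r$. Both are immediate from the formula \eqref{eq:partial}, since the operator acts pointwise in the coefficient variable $x \in X$ and $G$-equivariantly in the group variables. Thus the apparent main obstacle dissolves, and the proof rests entirely on the already-established local vanishing hypothesis combined with the stalk-local nature of sheaf exactness.
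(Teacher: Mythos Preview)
Your argument is correct and is precisely the standard stalkwise verification the paper has in mind; indeed the paper does not supply a proof at all, stating just before the lemma that local vanishing ``is clearly equivalent to the condition that the sequence \eqref{eq:barresolution_c} is an exact sequence of sheaves.'' One tiny remark: since $\pi_*^G\,\C_{X,V}$ is defined as $\ker\partial^0$, it is already a sheaf (kernels of sheaf morphisms are sheaves), so no sheafification is needed in degree~$0$.
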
 

\paragraph{\em Cohomology of the  equivariant direct image sheaf $\pi_{*}^G \, \C_{X,V}$} 

By a standard argument on double complexes (compare \cite[Th\'eor\`eme 2.4.1]{tohoku1}), we can compute the sheaf cohomology groups $H^*_{X/G}(\pi_{*}^G \, \C_{X,V})$ as follows:

\begin{pro} \label{pro:ssequence}
Suppose that the  continuous cohomology groups of $G$ with coefficients $\pi_{*} \, \C_{X,V} $ vanish locally. Then there exists a spectral sequence converging to the sheaf cohomology $H^*_{X/G}(\pi_{*}^G \, \C_{X,V})$ with 
$$E^{p,q}_2 = H^p \left(H^q_{X/G}(C^*_{X,V})\right) . $$
\end{pro}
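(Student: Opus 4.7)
The plan is to run the standard double complex / Cartan--Eilenberg argument. By Lemma \ref{lem:exact_c}, the local vanishing hypothesis gives an exact resolution
\[  \zsp  \lra \pi_{*}^G \, \C_{X,V} \lra C^{0}_{X,V} \stackrel{\partial^{0}}{\lra} C^{1}_{X,V} \stackrel{\partial^{1}}{\lra} C^{2}_{X,V} \stackrel{\partial^{2}}{\lra} \cdots \]
of sheaves on $X/G$. However the $C^{q}_{X,V}$ are in general not acyclic for the sheaf cohomology functor on $X/G$, so one cannot read off $H^{*}_{X/G}(\pi_{*}^G \, \C_{X,V})$ directly from this resolution. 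The remedy is to resolve each $C^{q}_{X,V}$ further.

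First, I would choose a Cartan--Eilenberg resolution of the complex $C^{\bullet}_{X,V}$: an upper half-plane double complex of injective (hence flasque, hence $\Gamma$-acyclic) sheaves $I^{p,q}$ on $X/G$ together with sheaf morphisms $C^{q}_{X,V} \to I^{0,q}$ such that, for every $q$, $I^{\bullet,q}$ is an injective resolution of $C^{q}_{X,V}$, and such that the induced rows on vertical cohomology, on vertical cocycles and on vertical coboundaries are all injective resolutions of the corresponding sheaves from the original complex (see \cite[Th\'eor\`eme 2.4.1]{tohoku1}). Applying the global sections functor $\Gamma = \Gamma(X/G, -)$ yields a double complex $K^{p,q} = \Gamma(X/G, I^{p,q})$ of abelian groups with total differential $\partial = \partial_{\mathrm{h}} + (-1)^{p}\partial_{\mathrm{v}}$.

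Next, I would consider the two standard spectral sequences associated with the filtered total complex $\mathrm{Tot}(K^{\bullet,\bullet})$. Filtering by columns first and taking vertical cohomology, the Cartan--Eilenberg property gives $H^{q}_{\mathrm{v}}(I^{\bullet,\bullet}) = \mathcal{H}^{q}(C^{\bullet}_{X,V})$ as a complex of injective sheaves whose rows $I^{\bullet, \cdot}$ are injective resolutions, so after applying $\Gamma$ and taking horizontal cohomology one obtains
\[  {}' E^{p,q}_{2} = H^{p}\bigl( H^{q}_{X/G}(C^{\bullet}_{X,V}) \bigr). \]
Filtering by rows first and taking horizontal cohomology, the exactness of \eqref{eq:resolution} forces $H^{p}_{\mathrm{h}}(I^{\bullet,\bullet}) = 0$ for $p \geq 1$ and $H^{0}_{\mathrm{h}}(I^{\bullet,\bullet}) = \pi_{*}^{G} \, \C_{X,V}$, resolved by the column $I^{0,\bullet}$ into injectives. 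Therefore
\[ {}'' E^{p,q}_{2} = \begin{cases} H^{q}_{X/G}(\pi_{*}^{G} \, \C_{X,V}) & p = 0, \\ 0 & p \geq 1, \end{cases} \]
so this second spectral sequence degenerates at $E_{2}$ and identifies the abutment of both as $H^{*}_{X/G}(\pi_{*}^{G} \, \C_{X,V})$, yielding the stated spectral sequence.

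There is no serious obstacle here beyond bookkeeping; the only mildly delicate point is to ensure a Cartan--Eilenberg resolution actually exists in the category of sheaves on $X/G$, which holds because that category has enough injectives, and to check that the differentials of $C^{\bullet}_{X,V}$ lift consistently to the double complex. Both are classical, so the proposition will follow by invoking the standard convergence theorem for first-quadrant double complexes.
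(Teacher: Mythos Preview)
Your overall approach---a Cartan--Eilenberg resolution of $C^{\bullet}_{X,V}$ and the two spectral sequences of the resulting double complex of global sections---is precisely the standard hypercohomology argument the paper invokes by citing \cite[Th\'eor\`eme 2.4.1]{tohoku1}. So the strategy is correct and coincides with the paper's.

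There is, however, a bookkeeping slip: you have interchanged the $E_{2}$-pages of the two filtrations. With your conventions ($I^{\bullet,q}$ an injective resolution of $C^{q}_{X,V}$, so $p$ is the resolution index and $q$ the $\partial$-index), the filtration whose $E_{1}$ is obtained by \emph{vertical} ($q$-)cohomology produces, via the Cartan--Eilenberg splitting, $E_{2}=H^{p}_{X/G}\bigl(\mathcal{H}^{q}(C^{\bullet}_{X,V})\bigr)$; it is \emph{this} sequence that degenerates, since Lemma~\ref{lem:exact_c} gives $\mathcal{H}^{q}(C^{\bullet}_{X,V})=0$ for $q\geq 1$ and $\mathcal{H}^{0}=\pi_{*}^{G}\,\C_{X,V}$, thereby identifying the common abutment with $H^{*}_{X/G}(\pi_{*}^{G}\,\C_{X,V})$. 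The other filtration first takes \emph{horizontal} ($p$-)cohomology, which---because each $I^{\bullet,q}$ resolves $C^{q}_{X,V}$---yields $E_{1}=H^{p}_{X/G}(C^{q}_{X,V})$ and hence $E_{2}=H^{q}_{\partial}\bigl(H^{p}_{X/G}(C^{\bullet}_{X,V})\bigr)$; after swapping the names of $p$ and $q$ this is the spectral sequence of the proposition. In particular, your claim that $H^{p}_{\mathrm{h}}(I^{\bullet,\bullet})=0$ for $p\geq 1$ ``by exactness of \eqref{eq:resolution}'' is misplaced: exactness of \eqref{eq:resolution} is a statement about the $q$-direction, not about the injective resolutions in the $p$-direction. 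Once the two filtrations are matched to the correct $E_{2}$-pages, your argument goes through.
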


Note that the induced complex of global sections for the resolution  \eqref{eq:resolution} takes the form  
\begin{equation} \label{eq:sections_c}
   0 \lra  \,  \C(X,V)^{G}  \, \lra  \,  \C(X,V)    \stackrel{\partial^{0}}{\lra}\,  C^{1}(G;  \C(X,V) ) \stackrel{\partial^{1}}{\lra}  \ldots  \; . 
 \end{equation}
By construction this is the inhomogeneous bar complex for the continuous cohomology of $G$ with
coefficients in the $G$-module $ \C(X,V) $. Therefore 
$$ E^{p,0}_2 = H^p \!  \left(H^0_{X/G}(C^*_{X,V}) \right)= H^p \left(C^*(G;\C(X,V))\right) = H^p\left(G, \C(X ,V)\right). $$

\smallskip 
\paragraph{\em Paracompact quotient space $X/G$} 
Suppose that $X/G$ is a paracompact Hausdorff space. Then   \eqref{eq:resolution} gives a resolution of  $\pi_{*}^G \, \C_{X,V}$   by \emph{fine} sheaves:  
\begin{lemma} \label{lem:fine_c}
If  $X/G$ is a paracompact Hausdorff space, then the sheaves
\begin{enumerate}
\item $C^{r}_{X,V}$, $r \geq 0$,  and  
\item  $\pi_{*}^G \, \C_{X,V}$ 
\end{enumerate} 
are fine sheaves.
\end{lemma}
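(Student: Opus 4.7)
The plan is to realise both $C^r_{X,V}$ and $\pi_{*}^G \, \C_{X,V}$ as sheaves of modules over the structure sheaf $\C_{X/G}$ of continuous real-valued functions on $X/G$, and then to invoke the classical fact that, on a paracompact Hausdorff base, any sheaf of modules over the sheaf of continuous functions is fine via multiplication by a continuous partition of unity. Since $X/G$ is paracompact Hausdorff, every locally finite open cover $\{U_i\}$ of $X/G$ admits a subordinate continuous partition of unity $\{\varphi_i\}$ with $\supp \varphi_i \subseteq U_i$ and $\sum_i \varphi_i \equiv 1$.

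For any open $U \subseteq X/G$ and any $\varphi \in \C_{X/G}(U) = \C(U,\RR)$, the pullback $\pi^{*}\varphi \in \C(\pi^{-1}(U),\RR)$ is a continuous $G$-invariant function on $\pi^{-1}(U)$. Pointwise multiplication by $\pi^{*}\varphi$ therefore yields a continuous endomorphism of the $G$-module $\C(\pi^{-1}(U),V)$ which commutes with the $G$-action, precisely because $\pi^{*}\varphi$ is $G$-invariant. In consequence, the prescription
\[
(\varphi \cdot \lambda)(\alpha_1,\dots,\alpha_r) \; := \; (\pi^{*}\varphi) \cdot \lambda(\alpha_1,\dots,\alpha_r), \quad \lambda \in C^r_{X,V}(U),
\]
defines a $\C_{X/G}(U)$-module structure on $C^r_{X,V}(U) = C^r(G;\C(\pi^{-1}(U),V))$ that commutes with the inhomogeneous coboundary $\partial^r$ of \eqref{eq:partial}. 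Since multiplication by a $G$-invariant function preserves $G$-invariant sections, the same recipe also equips the subsheaf $\pi_{*}^G\,\C_{X,V} = \ker \partial^{0}$ with a compatible $\C_{X/G}$-module structure.

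Given the locally finite cover $\{U_i\}$ of $X/G$ with subordinate partition of unity $\{\varphi_i\}$, the sheaf endomorphisms $\eta_i$ defined by multiplication by $\varphi_i$ (equivalently by $\pi^{*}\varphi_i$ on the preimage level) are then supported inside $U_i$ and satisfy $\sum_i \eta_i = \mathrm{id}$, both on each $C^{r}_{X,V}$ and on $\pi_{*}^G\,\C_{X,V}$. This is the standard defining property of a fine sheaf, and so both families of sheaves are fine.

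The only substantive point to verify, and the main obstacle, is that multiplication by $\pi^{*}\varphi$ truly commutes with the $G$-action appearing in \eqref{eq:partial}, so that it extends from an action on $C^0_{X,V} = \pi_{*}\,\C_{X,V}$ to an action on each higher $C^r_{X,V}$ and descends to the $G$-invariants. This is an immediate consequence of the $G$-invariance of $\pi^{*}\varphi$, but it is exactly the step where the passage through the structure sheaf of the \emph{quotient} space $X/G$ (rather than of $X$) is essential: an arbitrary continuous function on $X$ would not commute with the $G$-action on cochains, and multiplication by it would fail to preserve $\ker \partial^{0}$.
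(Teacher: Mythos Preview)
Your proof is correct and follows essentially the same approach as the paper's own argument: both realise $C^r_{X,V}$ and $\pi_*^G\,\C_{X,V}$ as sheaves of $\C_{X/G}$-modules via multiplication by the pullback $\pi^*\varphi$ of a continuous function on the quotient, and then conclude fineness from the existence of continuous partitions of unity on the paracompact Hausdorff space $X/G$. Your added remarks on why $G$-invariance of $\pi^*\varphi$ is needed (to preserve $\ker\partial^0$ and to interact correctly with the $G$-action in the coboundary formula) make explicit what the paper leaves implicit, but the underlying strategy is identical.
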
 
\begin{proof} 
Since $X/G$ is a paracompact Hausdorff space, any locally finite covering 
by open sets admits a subordinate partition of unity.
This implies that  the structure sheaf $\C_{X/G}$ is a fine sheaf of rings.  
In particular,  any sheaf of $\C_{X/G}$-modules is a fine sheaf (see \cite[Theorem 9.16]{Bredon_sheaves}). Now $C^{r}_{X,V}$ is a sheaf of $\C_{X/G}$-modules, where, given $\epsilon \in \C_{X/G}$, an open subset $U \subseteq X/G$  and $ c \, \in \,   C^{r}_{X,V}\left( U \right) \, $,
we declare
$$  \epsilon \cdot c \; \left(g_{1}, \ldots, g_{r} \right)   =   (\epsilon  \circ \pi) \cdot c \left( g_{1}, \ldots, g_{r} \right) \; . $$
It follows that $C^{r}_{X,V}$ is a fine sheaf, hence (1). 

Since $\pi_{*}^G \, \C_{X,V}$ is a sheaf of $\pi_{*}^{G} \,  \C_{X} = \C_{X/G}$ -modules on $X/G$, it is a fine sheaf as well.
Thus (2) holds.
\end{proof}

In this situation  the equivariant continuous cohomology groups of $G$ may be expressed in terms of sheaf cohomology on $X/G$:

\begin{corollary} \label{cor:groupc=sheafc}
Suppose that $X/G$ is a paracompact Hausdorff space and that \eqref{eq:resolution} is exact.
Then, for all $r \geq 0$,  there is a natural isomorphism 
$$   H^{r} \left(G, \C(X,V) \right) \, \cong \, H^{r} \left( \pi_{*}^G \, \C_{X,V} \right)  
 \; .$$ 
\end{corollary}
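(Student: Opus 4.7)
The plan is to deduce the corollary directly from Proposition~\ref{pro:ssequence} by using fineness to force the spectral sequence to collapse, and then identifying the surviving bottom row with the inhomogeneous bar complex already computed in \eqref{eq:sections_c}.

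First I would invoke Lemma~\ref{lem:fine_c}(1): on the paracompact Hausdorff space $X/G$, each $C^{r}_{X,V}$ is a fine sheaf, hence acyclic for sheaf cohomology. Therefore
$$H^{q}_{X/G}(C^{r}_{X,V}) \, = \, \zsp \quad \text{for all } q \geq 1, \; r \geq 0.$$
In the spectral sequence of Proposition~\ref{pro:ssequence} this forces $E_{2}^{p,q} = 0$ whenever $q \geq 1$, so the sequence degenerates at the $E_{2}$-page and yields an edge isomorphism
$$H^{r}\!\left(\pi_{*}^{G} \, \C_{X,V}\right) \, \cong \, E_{2}^{r,0} \, = \, H^{r}\!\left(H^{0}_{X/G}(C^{*}_{X,V})\right).$$

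Second, I would identify the surviving complex explicitly. By the very definition of the sheaves $C^{r}_{X,V}$ and of their differentials,
$$H^{0}_{X/G}(C^{r}_{X,V}) \, = \, C^{r}_{X,V}(X/G) \, = \, C^{r}\!\left(G; \C(\pi^{-1}(X/G),V)\right) \, = \, C^{r}\!\left(G; \C(X,V)\right),$$
and the induced differential is precisely the inhomogeneous coboundary operator \eqref{eq:partial} for cochains with values in the $G$-module $\C(X,V)$. This is exactly the complex \eqref{eq:sections_c} whose cohomology is the continuous group cohomology $H^{r}(G,\C(X,V))$, as already recorded in the text following Proposition~\ref{pro:ssequence}. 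Composing the two identifications, and noting that every step is functorial in $V$, produces the natural isomorphism claimed.

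The only subtle point is purely formal: one must check that the edge isomorphism of the collapsing spectral sequence is the one induced by the augmentation of the resolution \eqref{eq:resolution}, so that the final identification is genuinely natural. A more direct alternative, which bypasses the spectral sequence altogether, is an appeal to abstract de~Rham theory: since \eqref{eq:resolution} is a resolution of $\pi_{*}^{G} \, \C_{X,V}$ by fine, hence acyclic, sheaves on the paracompact Hausdorff space $X/G$, its sheaf cohomology is computed by the cohomology of its complex of global sections, which by the same identification is $H^{*}(G,\C(X,V))$. Either route gives the result with no further analytic input.
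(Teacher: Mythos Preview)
Your proof is correct and matches the paper's argument in substance. The only cosmetic difference is the ordering: you lead with the spectral-sequence collapse and mention the abstract de~Rham argument as an alternative, while the paper does the reverse---it invokes the general fact that a fine resolution computes sheaf cohomology (citing \cite[Section II.3]{Wells}) and notes the $E_2$-collapse parenthetically.
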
 
\begin{proof} The homology of the complex of global sections for 
any resolution of  $\pi_{*}^G \, \C_{X,V}$ by fine sheaves is  isomorphic to the 
sheaf cohomology  $H^{*} \left( \pi_{*}^G \, \C_{X,V} \right)$ (see, for example \cite[Section II.3]{Wells}).
By Lemma \ref{lem:fine_c} part (1), the resolution \eqref{eq:resolution} of $\pi_{*}^G \, \C_{X,V}$ is fine. Thus 
the cohomology of the sheaf $\pi_{*}^G \, \C_{X,V}$
is  isomorphic to the homology of the complex \eqref{eq:sections_c}. (In particular, the spectral sequence in Proposition \ref{pro:ssequence} collapses at $E_2$. That is $E_2^{p,q} = \zsp$, $q>0$.) 
\end{proof}

\subsection{Vanishing of equivariant continuous cohomology}
If $X/G$ is a paracompact Hausdorff space, then $\pi_{*}^G \, \C_{X,V}$ is a fine sheaf (by Lemma \ref{lem:fine_c} part (2)). Therefore 
its sheaf cohomology 
must be acyclic. In the view of Corollary \ref{cor:groupc=sheafc} this  
proves: 

\begin{theorem}[Vanishing theorem, continuous case] \label{thm:cont_vanish} \
Suppose that the 
continuous cohomology groups of $G$ with coefficients 
$\pi_* \, \C_{X,V}$ vanish locally and that $X/G$ is a paracompact Hausdorff space. Then  
$$H^r \left(G , \C(X,V) \right)= \zsp\,   \text{, for all $r \geq 1$} . $$ 
\end{theorem}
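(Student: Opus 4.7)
The proof should be a short assembly of the machinery already developed. All three previous lemmas (local vanishing interpretation, fineness under paracompactness, and the sheaf-theoretic expression of the group cohomology) are tailored precisely for this conclusion, so the plan is to chain them together.

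First, I would invoke Lemma \ref{lem:exact_c} to pass from the hypothesis of local vanishing of continuous cohomology of $G$ with coefficients $\pi_{*} \C_{X,V}$ to the exactness of the differential sheaf sequence \eqref{eq:resolution}. This exhibits $C^{*}_{X,V}$ as a resolution of the equivariant direct image sheaf $\pi_{*}^{G} \C_{X,V}$ on $X/G$.

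Next, using that $X/G$ is paracompact Hausdorff, I would apply Lemma \ref{lem:fine_c} twice: part (1) guarantees that the resolution $C^{*}_{X,V}$ is a resolution by fine (hence acyclic) sheaves, so the cohomology of $\pi_{*}^{G} \C_{X,V}$ can be computed as the homology of the complex of global sections \eqref{eq:sections_c}, which by construction equals $H^{*}\!\left(G, \C(X,V)\right)$; this is precisely the content of Corollary \ref{cor:groupc=sheafc}. Part (2) tells us that $\pi_{*}^{G} \C_{X,V}$ is itself a fine sheaf on the paracompact Hausdorff space $X/G$, hence its sheaf cohomology vanishes in positive degrees.

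Combining the two, the isomorphism
\[
H^{r}\!\left(G, \C(X,V)\right) \,\cong\, H^{r}\!\left(\pi_{*}^{G} \C_{X,V}\right) \, = \, \zsp, \qquad r \geq 1,
\]
yields the claim. There is no serious obstacle here; the whole content of the theorem has been front-loaded into the preparatory sheaf-theoretic lemmas. The one point worth stating carefully is that both hypotheses are used essentially: local vanishing is needed to ensure \eqref{eq:resolution} really is a resolution (without it, the sheaves $C^{r}_{X,V}$ would still be fine but would not resolve $\pi_{*}^{G} \C_{X,V}$), while paracompactness of $X/G$ is needed twice — once to guarantee acyclicity of the resolving sheaves so that they compute the sheaf cohomology, and once to guarantee that the target sheaf $\pi_{*}^{G} \C_{X,V}$ is itself acyclic.
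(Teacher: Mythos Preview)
Your proof is correct and follows exactly the same approach as the paper: invoke Corollary~\ref{cor:groupc=sheafc} (whose hypotheses are met via Lemma~\ref{lem:exact_c} and Lemma~\ref{lem:fine_c}(1)) to identify $H^{r}(G,\C(X,V))$ with $H^{r}(\pi_{*}^{G}\C_{X,V})$, then use Lemma~\ref{lem:fine_c}(2) to conclude the latter vanishes since a fine sheaf on a paracompact Hausdorff space is acyclic. The paper states the argument more tersely, but the content is identical.
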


\smallskip
\paragraph{\em Proper actions of Lie groups}
A typical case for application arises in smooth actions on manifolds. Let\/ $G$ be a Lie group and let $V$ be a quasi-complete continuous $G$-module. Then we have:
\begin{corollary}[Vanishing theorem, smooth manifolds]  \label{cor:smooth_man} 
Let $G$ be a Lie group and $X$ a $G$-manifold on which $G$ acts smoothly and
properly. Then 
$$  H^{r}\left(G, \C(X,V) \right)   = 0 \;  , \text{ for all }  r  \geq 1  . $$
\end{corollary}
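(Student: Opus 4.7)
The plan is to verify the two hypotheses of Theorem \ref{thm:cont_vanish} and then apply it directly. Namely, I need to check that the continuous cohomology of $G$ with coefficients in $\pi_{*} \, \C_{X,V}$ vanishes locally and that the orbit space $X/G$ is a paracompact Hausdorff space.

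For the local vanishing, the strategy is to invoke Lemma \ref{lem:local_vanish} at each point $p \in X$. Since $G$ acts smoothly and properly on the manifold $X$, the isotropy group $G_p$ is compact and, by the slice theorem for proper actions of Lie groups (due to Palais), there exists a smooth slice $\cS_p$ through $p$. Because $G$ is a Lie group and $G_p$ is a closed subgroup, the quotient $G \to G/G_p$ is a smooth $G_p$-principal bundle and in particular admits continuous (even smooth) local cross sections. Hence all the hypotheses of Lemma \ref{lem:local_vanish} are met, and the $G$-invariant tubular neighbourhoods of $G \cdot p$ form a basis of $G$-invariant open neighbourhoods $U$ on which $H^r(G,\C(U,V)) = \zsp$ for $r \geq 1$. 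Translated to $X/G$, this is exactly the local vanishing condition \eqref{eq:local_v} for $\pi_{*}\, \C_{X,V}$.

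For the quotient $X/G$ being paracompact Hausdorff, I would argue as follows. The properness of the action implies that the orbit equivalence relation is closed in $X \times X$, so $X/G$ is Hausdorff. Furthermore, as $X$ is a smooth manifold it is paracompact, and since $\pi : X \to X/G$ is a continuous open surjection, $X/G$ inherits paracompactness (indeed, the existence of $G$-tubes around each orbit exhibits $X/G$ locally as the quotient $\cS_p/G_p$ of a slice by a compact group action, which is metrizable; combined with Hausdorffness this yields paracompactness).

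With both hypotheses verified, Theorem \ref{thm:cont_vanish} applies to yield $H^r(G, \C(X,V)) = \zsp$ for all $r \geq 1$, giving the corollary. The only nontrivial step is the input from slice theory; the rest is bookkeeping against the framework already established. I anticipate no serious obstacle beyond invoking the slice theorem in the correct form for a proper smooth action, together with the standard fact that closed subgroups of Lie groups admit local sections in the ambient group.
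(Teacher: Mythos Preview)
Your proposal is correct and follows essentially the same route as the paper: verify the hypotheses of Theorem~\ref{thm:cont_vanish} by invoking the differentiable slice theorem (the paper cites Koszul rather than Palais) together with the existence of local sections for $G \to G/G_p$, and then note that properness forces $X/G$ to be paracompact Hausdorff. The only minor point is that your justification of paracompactness of $X/G$ is a bit loose (open surjections do not transport paracompactness in general), but the paper itself simply asserts this fact without further argument, and the conclusion is standard.
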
 
\begin{proof} Since $X$ is paracompact and $G$ acts properly,  $X/G$ is a paracompact Hausdorff space. 
By the differentiable slice theorem (see \cite[\S 4, Lemma 4]{JK}, for example), every point 
$p \in X$ admits a  $G$-tube $U_{p}$. Moreover, since $G$ is a Lie group, for any closed subgroup 
$H$ of $G$, $G/H$ is a manifold and admits local (smooth) sections. Since $V$ is assumed quasi-complete, it  follows by Lemma \ref{lem:local_vanish} that the continuous cohomology of $G$ with coefficients $\C(X,V)$ vanishes 
locally. Therefore, Theorem \ref{thm:cont_vanish} applies.
\end{proof}

\section{Smooth coefficients and vanishing of equivariant differentiable cohomology}   \label{sec:smooth_coefficients} 
Let $G$ be a Lie group 
 and $X$ a smooth manifold on which $G$ acts smoothly. Such $X$ will be called a
\emph{differentiable $G$-space}.  For any continuous $G$-module $V$, where $V$ is a locally convex topological vector space (as  in Section \ref{sec:cont_cohomology}),  let $$\C^{\infty}(X,V)$$  denote the vector space of smooth functions on $X$ with values in $V$. 
Endowed with the $\Cs$-topology of maps,  
$\C^{\infty}(X,V)$ is a locally convex vector space and (quasi-) complete if $V$ is (quasi-) complete \cite[Chapter 1, \S 10]{Groth_tvs}, see also  \cite[\S 3]{Cass} and \cite{Schwartz}. 
Moreover, since $G$ acts smoothly on $X$, with  respect to \eqref{eq:induced_action},  $\C^{\infty}(X,V) $ becomes a continuous $G$-module in an obvious way. 

\smallskip 
Van Est and Mostow-Hochschild \cite{HM} introduced the notion of \emph{differentiable $G$-modules} and differentiable cohomology groups $ H^{r}_{d} \!   \left(G, \cdot\right)$ based on smooth co\-chains (see Section \ref{sec:diff_cohomology} below for definitions). In particular, if  $X$ is a differentiable $G$-space and $V$  a differentiable $G$-module then $\C^{\infty}(X,V)$ is a differentiable $G$-module. 

\smallskip
The main result for this section will be:

\begin{theorem}[Vanishing theorem, smooth case]  \label{thm:smooth_vanish}
Let $X$ be a differentiable $G$-space on which $G$ acts properly, and let\/ $V$ be a differentiable $G$-module. Then  
$$  H^{r}_{d}  \left(G, \C^{\infty}(X,V) \right)   \, =  \,   H^{r}  \left(G, \C^{\infty}(X,V) \right)  \, =  \zsp \, , \;  r \geq 1. $$
\end{theorem}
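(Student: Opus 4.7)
The strategy is to replicate the sheaf-theoretic proof of Theorem~\ref{thm:cont_vanish} and Corollary~\ref{cor:smooth_man} in the smooth/differentiable setting. On $X/G$ I would introduce a differential sheaf $C^{r,d}_{X,V}$ whose sections over an open set $U \subseteq X/G$ are the smooth inhomogeneous cochains $C^r_d(G;\, \Cs(\pi^{-1}(U), V))$, equipped with the coboundary formula~\eqref{eq:partial}. The kernel of $\partial^0$ is the equivariant direct image $\pi_*^G \Cs_{X,V}$, and each $C^{r,d}_{X,V}$ is naturally a module over the sheaf of rings $\pi_*^G \Cs_X$ on $X/G$.

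The first step is to establish smooth analogues of Lemmas~\ref{lem:ind} and \ref{lem:tubesc}. Define smooth induction by
$$\Ind_{H}^{G,\infty} W \; := \; \{f \in \Cs(G,W) : f(gh^{-1}) = h \cdot f(g) \text{ for all } g \in G,\, h \in H\},$$
with $G$-action by left translation. The differentiable slice theorem furnishes a smooth $G$-tube $U_p \cong G \times_{G_p} S_p$ around every orbit, with $G_p$ compact and $G \to G/G_p$ admitting smooth local cross-sections, yielding an isomorphism of differentiable $G$-modules $\Cs(U, V) \cong \Ind_{G_p}^{G,\infty} \Cs(S_U, V)$ for any $G$-invariant open $U \subseteq U_p$. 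The differentiable Shapiro lemma, in the form developed by Hochschild--Mostow, gives
$$H^r_d(G, \Cs(U, V)) \;\cong\; H^r_d(G_p,\, \Cs(S_U, V)).$$
Since $G_p$ is compact and $\Cs(S_U, V)$ is quasi-complete, averaging smooth cochains against a normalized bi-invariant Haar measure supplies the smooth analogue of Proposition~\ref{compsta}: the right-hand side vanishes for $r \geq 1$. This yields smooth local vanishing, and hence exactness of the sheaf complex.

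Since the $G$-action on $X$ is proper, $X/G$ is paracompact Hausdorff; moreover averaging smooth partitions of unity subordinate to $G$-tubes over the compact isotropy groups produces smooth $G$-invariant partitions of unity on $X$, so that $\pi_*^G \Cs_X$ is a fine sheaf of rings on $X/G$. Each $C^{r,d}_{X,V}$ and also $\pi_*^G \Cs_{X,V}$ is a $\pi_*^G \Cs_X$-module, hence fine. The fine resolution
$$\zsp \,\lra\, \pi_*^G \Cs_{X,V} \,\lra\, C^{0,d}_{X,V} \stackrel{\partial^0}{\lra} C^{1,d}_{X,V} \stackrel{\partial^1}{\lra} \cdots$$
thus computes the sheaf cohomology of $\pi_*^G \Cs_{X,V}$, which is acyclic. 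Reading off the complex of global sections---which is precisely the bar complex $C^{*}_d(G;\, \Cs(X,V))$---one obtains $H^r_d(G, \Cs(X,V)) = \zsp$ for $r \geq 1$. Running the same scheme with continuous cochains in place of smooth ones (i.e.\ applying the spectral sequence of Proposition~\ref{pro:ssequence} with $\Cs_{X,V}$ replacing $\C_{X,V}$, using the local continuous vanishing from Lemma~\ref{lem:local_vanish} with coefficients $\Cs(S_U, V)$) gives the continuous version $H^r(G, \Cs(X,V)) = \zsp$.

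The principal obstacle I expect is the technical verification of the smooth Shapiro lemma and the smooth induction/tube identifications. These require controlling the interplay of the $\Cs$-topologies on $G^r$ and on $\Cs(\pi^{-1}(U), V)$, ensuring the smooth analogues of the adjointness~\eqref{eq:adjointc} and of Lemma~\ref{lem:cpushfw} (so that the push-forward of a smooth invariant function is smooth on the quotient tube), and verifying that integration against the Haar measure of a compact isotropy group preserves smoothness of cochains with values in $\Cs(S_U, V)$. These facts follow from the Hochschild--Mostow--Blanc framework for differentiable modules, but must be applied uniformly across the slice data to match the sheaf formalism.
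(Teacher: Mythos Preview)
Your proposal is correct and follows essentially the same sheaf-theoretic approach as the paper: differentiable slices give local vanishing via the smooth Shapiro lemma (the paper's Lemma~\ref{lem:tubesc_smooth}), and a fine resolution over $X/G$ built from smooth $G$-invariant partitions of unity (the paper's Lemma~\ref{lem:fine_s}) yields global vanishing of $H^r_d$. The only minor deviation is that the paper obtains the continuous-cohomology equality $H^r_d = H^r$ by invoking the Hochschild--Mostow comparison Theorem~\ref{thm:comparison} rather than rerunning the sheaf argument with continuous cochains as you suggest; both routes work.
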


This result  implies Theorem \ref{thm:main_cohomology} in the introduction. 

\medskip 
Note that Theorem \ref{thm:smooth_vanish} 
is a differentiable version  of 
Corollary \ref{cor:smooth_man}.  
Here we are dealing with smooth functions as coefficients instead of 
continuous functions.  Likewise, the differentiable cohomology groups $
H^{*}_{d}\left(G, \cdot  \right)$ are using smooth cochains in their definition, and it is required that the coefficient modules for the differentiable cohomology functor $H_d^*(G, \cdot)$  are differentiable $G$-modules. We shall explain these notions right away in the following Section \ref{sec:diff_cohomology}.


\subsection{Differentiable cohomology groups} \label{sec:diff_cohomology}
Since $G$ is a Lie group we can introduce a smooth analogue of the continuous cohomology theory, which was first systematically studied by Van Est  \cite{VanEst}. Its foundations were further developed by Mostow and Hochschild, see \cite{Mostow, HM}. Another good reference is \cite{Blanc}.
The differentiable cohomology of $G$ is a functor defined on the category of
differentiable $G$-modules. 

\smallskip \paragraph{\em Differentiable $G$-modules} 
Let $V$ be a differentiable $G$-module. By this we mean a continuous $G$-module $V$
(with all the assumptions of Section \ref{sec:cont_cohomology} in place, in particular $V$ is a quasi-complete Hausdorff locally convex topological real vector space) 
that satisfies (see \cite{Blanc,HM}) that,  for all $v \in V$, 
\begin{enumerate}
\item the orbit map $G \to V$, $o_{v} \! : g \mapsto g \cdot v$ is smooth.
\item the map $V \to \Cs(G, V)$, $v \mapsto  o_{v}$ is smooth. 
\end{enumerate} 
An isomorphism of differentiable $G$-modules is a $G$-equivariant isomorphism of topological vector spaces. 

\smallskip \paragraph{\em Smooth functions on $G$-spaces} 
A differentiable $G$-space is a smooth manifold $X$ on which $G$ acts smoothly.
In this situation, $$ \Cs(X,V)$$  with the usual action (defined by \eqref{eq:induced_action}) is a differentiable $G$-module. (Compare  \cite[\mbox{$8^{\circ})$} Proposition]{Blanc}.)

\smallskip \paragraph{\em Smooth cochains} 
The differentiable cohomology groups  
of $G$ with coefficients 
in the differentiable module $V$ 
 are defined by using differentiable cochains
instead of continuous cochains.  
For this, we consider in the complex of continuous inhomogeneous 
cochains $$\left(C^{*}(G;V), \partial \right)$$ (see Section \ref{sect:cc_groups})
 the subcomplex of differentiable inhomogeneous cochains 
$$\left( \, C^{*}_{d}(G;V), \partial \, \right) \,  , \; \text{ where } \, C^{r}_{d}(G;V) :=  \, \C^{\infty}(G^{r},V) \,  .$$ 
We put $B^r_{d}(G;V)  = \partial \left(C^{r-1}_{d}(G;V)\right)$ and 
$Z^*_d(G;V) = \ker \partial \cap C^*_d(G;V)$. This defines the differentiable cohomology groups 
$$  H^{r}_{d}(G,V) = \, Z^r_d(G;V) \big/ B^r_d(G,V) \,  . $$   

\smallskip 
We mention the following
comparison result with continuous cohomology. 

\begin{theorem}[Hochschild, Mostow \mbox{\cite[Theorem 5.1]{HM}}] \label{thm:comparison}
 Let $G$ be a real  Lie group and $V$ a differentiable $G$-module. Then the natural 
 map $$    H^{*}_{d}(G,V)  \, \to  \,  H^{*}(G,V)$$
 is an isomorphism of  (topological) vector spaces. 
\end{theorem}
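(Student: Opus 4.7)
The plan is to show that the natural inclusion of cochain complexes
\[
\iota : \bigl(C^{*}_{d}(G;V),\partial\bigr)\,\hookrightarrow\,\bigl(C^{*}(G;V),\partial\bigr)
\]
is a cochain homotopy equivalence. The isomorphism of cohomology vector spaces follows at once, and the topological statement comes for free because the chain-level maps in the homotopy equivalence will be continuous for the natural Fr\'echet-type topologies on $C^{r}(G;V)$ and $C^{r}_{d}(G;V)$, hence descend to continuous maps in both directions on cohomology carrying the quotient topology.

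The key tool is a smoothing operator on cochains via convolution on $G$. Fix a left Haar measure $dg$ on $G$ and choose a symmetric non-negative $\phi\in\Cs_{c}(G)$ with $\int_{G}\phi(g)\,dg=1$. For $f\in C^{r}(G;V)$, define
\[
(S_{\phi}f)(g_{1},\ldots,g_{r})\;=\;\int_{G^{r}}\phi(h_{1})\cdots\phi(h_{r})\,f(g_{1}h_{1},\ldots,g_{r}h_{r})\,dh_{1}\cdots dh_{r}.
\]
After a change of variables $h_{i}\mapsto g_{i}^{-1}h_{i}$ (absorbing the modular function), the integrand has the form $\prod_{i}\phi(g_{i}^{-1}h_{i})\cdot f(h_{1},\ldots,h_{r})$, in which the dependence on the $g_{i}$ is smooth. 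Combining this with the differentiability axioms on $V$ (in particular, that $v\mapsto o_{v}$ is smooth $V\to\Cs(G,V)$) and the quasi-completeness of $V$ (which ensures the vector-valued integrals are defined), one concludes that $S_{\phi}f\in C^{r}_{d}(G;V)$ and that $S_{\phi}$ commutes with $\partial$.

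Next I would construct an explicit chain homotopy $H_{\phi}:C^{r}(G;V)\to C^{r-1}(G;V)$ satisfying $\partial H_{\phi}+H_{\phi}\partial=\iota\circ S_{\phi}-\mathrm{id}$. The idea is to interpolate: let $S_{\phi}^{(k)}$ denote the partial smoothing that convolves only the first $k$ of the $r$ coordinates, so that $S_{\phi}^{(0)}=\mathrm{id}$ and $S_{\phi}^{(r)}=S_{\phi}$; telescoping,
\[
\iota\circ S_{\phi}-\mathrm{id}\;=\;\sum_{k=1}^{r}\bigl(S_{\phi}^{(k)}-S_{\phi}^{(k-1)}\bigr),
\]
and each consecutive difference is realized as the simplicial coboundary (i.e.\ alternating sum of face operations applied to a suitable $(r+1)$-cochain built by inserting the convolution variable) in the bar complex. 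The same formulas, applied to cochains that are already smooth, give $S_{\phi}\circ\iota\simeq\mathrm{id}$ on $C^{*}_{d}(G;V)$. Hence $\iota$ is a cochain homotopy equivalence and $\iota_{*}:H^{*}_{d}(G,V)\to H^{*}(G,V)$ is an isomorphism of vector spaces with inverse $[S_{\phi}]$, independent of $\phi$ up to cohomology.

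The main obstacle I anticipate is twofold: (i) verifying that the partial smoothings $S_{\phi}^{(k)}f$ and the intermediate cochains entering $H_{\phi}$ are smooth in exactly the right subset of variables, which is precisely the role of the differentiability axioms on $V$ and the fact that $\Cs(X,V)$-valued orbit maps are smooth; and (ii) justifying the exchange of coboundary and integration at each step, which relies on quasi-completeness of $V$ together with the uniform compact support of $\phi$. Once these are in place, continuity of $\iota$, $S_{\phi}$, and $H_{\phi}$ for the Fr\'echet topologies yields continuous inverses at cohomology level, promoting the algebraic isomorphism to a linear homeomorphism as claimed.
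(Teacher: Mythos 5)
First, note that the paper does not prove this statement at all: it is quoted verbatim from Hochschild--Mostow \cite[Theorem 5.1]{HM}, so there is no internal proof to compare against. The published proofs (Hochschild--Mostow, and also Blanc and Casselman--Wigner) do not build an explicit chain homotopy on the inhomogeneous complex; they show that both the continuous standard complex $\C(G^{*+1},V)$ and the differentiable one $\Cs(G^{*+1},V)$ are strong resolutions of $V$ by relatively injective modules, and then invoke the uniqueness of such resolutions. Your smoothing strategy is a legitimate alternative in spirit, but as written it has a concrete error.

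The gap is the claim that $S_{\phi}$ commutes with $\partial$. With your definition on \emph{inhomogeneous} cochains this is false. Compare the first term of the coboundary: $S_{\phi}(\partial\lambda)(g_{1},\dots,g_{r+1})$ contains $\int\phi(h_{1})\,(g_{1}h_{1})\cdot\lambda(g_{2}h_{2},\dots)\,dh$, whereas $\partial(S_{\phi}\lambda)$ contains $g_{1}\cdot\int\lambda(g_{2}h_{2},\dots)\,dh$; these differ whenever the $G$-action on $V$ is nontrivial. The middle terms fail too: smoothing $\lambda(\dots,\alpha_{i}\alpha_{i+1},\dots)$ produces arguments of the form $g_{i}h_{i}g_{i+1}h_{i+1}$, which is not $(g_{i}g_{i+1})k$ for a single convolution variable $k$, so it does not match the corresponding term of $\partial(S_{\phi}\lambda)$. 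The fix is to work with \emph{homogeneous} equivariant cochains $F:G^{r+1}\to V$, $F(gg_{0},\dots,gg_{r})=g\cdot F(g_{0},\dots,g_{r})$, where right-convolution in each variable preserves left-equivariance and visibly commutes with the face maps (the integral over the omitted variable just contributes $\int\phi=1$); transported back to the inhomogeneous picture this becomes a two-sided convolution $\int\prod\phi(h_{i})\,h_{0}\cdot\lambda(h_{0}^{-1}\alpha_{1}h_{1},h_{1}^{-1}\alpha_{2}h_{2},\dots)\,dh$, not your formula. With that correction, your telescoping construction of the homotopy (a standard prism/partial-smoothing argument on the homogeneous bar complex), the smoothness of the regularized cochains via the differentiability axioms on $V$, and the quasi-completeness needed for the vector-valued integrals, the argument can be carried through; but as stated the central identity on which everything rests does not hold.
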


\subsubsection{Compact Lie groups} 
Let $V$ be a differentiable $G$-module. By Proposition \ref{compsta}, 
vanishing of continous cohomology with coefficients  
in $V$ follows for all compact Lie groups $G$. 
In fact, the same proof (or application of Theorem \ref{thm:smooth_vanish}) and Theorem \ref{thm:comparison}) shows: 

\begin{lemma}  \label{lem:smooth_van_compact}
Let $G$ be a compact Lie group and let $V$ be a differentiable $G$-module. 
Then $ H_{d}^{r} \left(G, V\right)   = \zsp$, for all $r \geq 1$.
\end{lemma}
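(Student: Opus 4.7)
The plan is to imitate the proof of Proposition \ref{compsta} essentially verbatim, taking care that the primitive obtained by integration against Haar measure lies in the smooth subcomplex $C^{*}_d(G;V)$, not merely in $C^{*}(G;V)$. Fix a smooth cocycle $\lambda \in Z^r_d(G;V)$ and let $d\alpha$ denote the normalized (bi-invariant) Haar measure on the compact Lie group $G$. Because $V$ is quasi-complete (which is part of the hypothesis on a differentiable $G$-module) and the integrand is continuous with image in the compact set $\lambda(G^r)$ whose closed convex hull is complete, the $V$-valued Bochner-type integral
\[
\tau(\alpha_1,\ldots,\alpha_{r-1}) \,=\, (-1)^{r}\int_G \lambda(\alpha_1,\ldots,\alpha_{r-1},\alpha)\,d\alpha
\]
is well defined as an element of $C^{r-1}(G;V)$. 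The algebraic identity $\partial^{r-1}\tau = \lambda$ then follows from $\partial^r\lambda = 0$ together with left-invariance of $d\alpha$, by the same calculation as in Proposition \ref{compsta}.

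The one place where the smooth setting demands extra care, and which I would regard as the main technical point, is the verification that $\tau$ is actually smooth in $(\alpha_1,\ldots,\alpha_{r-1})$, so that $\tau \in C^{r-1}_d(G;V)$. Since $\lambda$ is smooth on $G^r$ with values in the quasi-complete locally convex space $V$, and since the domain of integration $G$ is compact, the standard differentiation-under-the-integral theorem for vector-valued integrals in quasi-complete locally convex spaces applies: any iterated left-invariant derivative in the first $r-1$ variables can be brought inside the integral, producing a continuous (in fact smooth) function of $(\alpha_1,\ldots,\alpha_{r-1})$. This gives $\tau \in C^{r-1}_d(G;V)$ and therefore $[\lambda]=0$ in $H^r_d(G,V)$.

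A cleaner, if less self-contained, route is to bypass the smoothness verification entirely by invoking the Hochschild--Mostow comparison Theorem \ref{thm:comparison}, which asserts that the natural map $H^*_d(G,V) \to H^*(G,V)$ is an isomorphism for every real Lie group $G$ and every differentiable $G$-module $V$. Since $V$ is quasi-complete, Proposition \ref{compsta} already yields $H^r(G,V) = \{0\}$ for every $r \geq 1$ when $G$ is compact, and transporting this vanishing across the comparison isomorphism gives the conclusion of the lemma. Either strategy closes the argument; the first is preferable because it produces an explicit primitive and because it does not rely on the nontrivial comparison theorem.
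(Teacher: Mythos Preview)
Your proposal is correct and matches the paper's own treatment: the paper does not give a detailed proof either, but simply remarks that the same integration argument as in Proposition~\ref{compsta} works, or alternatively that one may invoke the Hochschild--Mostow comparison Theorem~\ref{thm:comparison} together with Proposition~\ref{compsta}. You have spelled out both routes, including the one point the paper leaves implicit, namely that differentiation under the integral (legitimate here since $G$ is compact and $V$ is quasi-complete) ensures the primitive $\tau$ lies in the smooth subcomplex $C^{r-1}_d(G;V)$.
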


\subsubsection{Smooth Shapiro lemma} \label{sec:smoothShapi}
Let $H$ be a closed subgroup of $G$ and $W$ a differentiable $H$-module. Put 
$$  \Inds_H^G \, W = \; \{f\in \Cs(G,W)\mid f(g h^{-1})=h \cdot f(g), \ g\in G, \, h\in H\}.$$
Then the space $\Inds_H^G \, W$ turns into a differentiable $G$-module, by declaring 
$$  (\alpha \cdot f) \left(g\right) =  f \left(\alpha^{-1} g\right) \; . $$
As it turns out the usual proof of Shapiro's lemma (Proposition \ref{Shapi})  works for differentiable cochains with respect to the functor $ \Inds_H^G$, compare 
\cite{Blanc}.  It relies on a differentiable equivariant version of Frobenius reciprocity related to Lemma \ref{lem:ind2_smooth} below. 

\smallskip 
Let $W$ be a differentiable $H$-module. 
\begin{pro}
[Smooth Shapiro lemma, see \mbox{ \cite[Th\'eor\`eme 11]{Blanc}}]
\label{Shapi_smooth} 
There is a natural isomorphism 
$$  H^r _{d}  \left(G,  \Inds_H^G \, W \right)   \; \cong  \;  H^r_{d}  \left(H, W\right) \, , \, r \geq 0 .  $$
\end{pro}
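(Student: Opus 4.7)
The approach closely mirrors the proof of the continuous Shapiro lemma (Proposition \ref{Shapi}), executed within the smooth category. The crucial geometric ingredient is that $G \to G/H$ is a smooth principal $H$-bundle (since $H$ is closed in the Lie group $G$), so it admits smooth local sections and $G/H$ is a smooth manifold. The main tool is the evaluation-at-identity cochain map
$$\Phi^r : C^r_d(G;\Inds_H^G W) \longrightarrow C^r_d(H;W), \qquad \Phi^r(\lambda)(h_1,\ldots,h_r) := \lambda(h_1,\ldots,h_r)(e).$$
First I would verify that $\Phi^\bullet$ is a chain map; the nontrivial case is the first term of the coboundary formula \eqref{eq:partial}, which reduces to the identity $(h_1 \cdot f)(e) = f(h_1^{-1}) = h_1 \cdot f(e)$ for $f \in \Inds_H^G W$, where the first equality uses the $G$-action $(\alpha \cdot f)(g) = f(\alpha^{-1}g)$ on $\Inds_H^G W$ and the second is the defining $H$-equivariance $f(gh^{-1}) = h \cdot f(g)$ specialized to $g = e$. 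In degree zero this already identifies $(\Inds_H^G W)^G$ with $W^H$.

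Next, to produce a cohomology-level inverse, I would choose a smooth partition of unity $\{\rho_i\}$ on $G/H$ subordinate to an open cover $\{U_i\}$ carrying smooth sections $s_i : U_i \to G$, and write $g = s_i(gH)\,\kappa_i(g)$ with $\kappa_i(g) \in H$ whenever $gH \in U_i$. Patching the local formulas using the partition of unity gives a smooth cochain map $\Psi^\bullet : C^r_d(H;W) \to C^r_d(G;\Inds_H^G W)$, and one then exhibits explicit smooth cochain homotopies witnessing $\Phi \circ \Psi \simeq \mathrm{id}$ and $\Psi \circ \Phi \simeq \mathrm{id}$ built from the standard prism operators of the classical discrete and continuous Shapiro lemmas. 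A conceptually cleaner alternative, which is the one pursued in \cite{Blanc}, is to establish a smooth Frobenius reciprocity $\Hom_G^{\mathrm{diff}}(A, \Inds_H^G W) \cong \Hom_H^{\mathrm{diff}}(\res_H^G A, W)$ and apply it to a canonical acyclic resolution of the trivial module; both computations then produce the same cochain complex.

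The main obstacle is ensuring smoothness throughout. The coefficient space $\Cs(G,W)$, endowed with the $\Cs$-topology, is an infinite-dimensional locally convex vector space, so each cochain $\lambda : G^r \to \Inds_H^G W \subset \Cs(G,W)$ must be smooth into that topology. The key technical point is the smooth exponential law $\Cs(G^r,\Cs(G,W)) \cong \Cs(G^r \times G,W)$ as topological vector spaces, the smooth analogue of \eqref{eq:adjointc}, which ensures that $\Phi^r$ and $\Psi^r$ are genuine smooth cochains rather than merely continuous ones. Once this is in place, smoothness of the constructed maps reduces to smoothness of the $H$-action on $W$ (conditions (1)--(2) of the differentiable $H$-module definition), of the bundle charts for $G \to G/H$, and of the partition of unity on $G/H$.
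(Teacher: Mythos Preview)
The paper does not supply its own proof of this proposition; it merely cites \cite[Th\'eor\`eme 11]{Blanc} and remarks in the preceding paragraph that the argument is the smooth analogue of Proposition~\ref{Shapi}, resting on the differentiable Frobenius reciprocity recorded as Lemma~\ref{lem:ind2_smooth}. Your proposal is correct and provides substantially more detail than the paper itself. You accurately identify the Frobenius-reciprocity route as the one taken in \cite{Blanc}, which matches the paper's indication; your explicit evaluation-at-identity chain map $\Phi^\bullet$ together with a partition-of-unity inverse $\Psi^\bullet$ is a legitimate alternative in the spirit of the Casselman--Wigner proof of Proposition~\ref{Shapi}. The technical ingredient you flag as decisive---the smooth exponential law $\Cs(G^r,\Cs(G,W)) \cong \Cs(G^{r+1},W)$---is precisely the adjointness \eqref{eq:adjoints} that the paper invokes elsewhere, so your smoothness discussion is well aligned with the paper's framework.
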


\smallskip 
\paragraph{\em Remark} Note that, in the view of Theorem \ref{thm:comparison}, Proposition \ref{Shapi_smooth}  implies that the continuous cohomology groups  $H^r \! \left(H,W \right)$ and $H^r \! \left(G,  \Inds_H^G \, W \right)$ are isomorphic.

\smallskip 
\paragraph{\em Associated bundles over $\, G/H$} Let $Y$ be a differentiable $H$-space. Then the associated bundle 
$$ G \times_{H} Y$$ 
is a smooth $G$-manifold, and a locally trivial smooth 
fiber bundle over $G/H$ with fiber $Y$. 
The following is the differentiable analogue of Lemma \ref{lem:ind}.

\begin{lemma} \label{lem:ind_smooth}
Let $V$ be a differentiable $G$-module.
There is a natural  isomorphism of differentiable  $G$-modules 
\begin{equation} \label{eq:inds}
\mu: \, \Inds_{H}^{G} \; \Cs(Y,\res_H^G \, V)  \, \to  \, \Cs(G \times_{H} Y,V) \;   .  
\end{equation}
\end{lemma}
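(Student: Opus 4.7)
The plan is to mirror the proof of Lemma \ref{lem:ind} step by step, replacing continuous objects with their smooth counterparts and verifying that each homeomorphism upgrades to one in the $\Cs$-topology.

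First, I would define the candidate map by the same explicit formula as in the continuous case. For $f \in \Inds_H^G \Cs(Y, \res_H^G V) \subseteq \Cs(G, \Cs(Y,V))$, set
\[
\bar\mu(f)(g,y) \; = \; g \cdot \bigl(f(g)(y)\bigr) \; \in \; V.
\]
The smoothness of $\bar\mu(f) : G \times Y \to V$ follows from the fact that $V$ is a differentiable $G$-module, so the action map $G \times V \to V$ is smooth in a suitable sense and composes with smooth maps to yield smooth maps; the $H$-equivariance of $f$ then gives $H$-invariance of $\bar\mu(f)$ under the diagonal action \eqref{eq:diagonal}, so $\bar\mu(f)$ descends to $\mu(f) \in \Cs(G \times_H Y, V)$. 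Conversely, given $F \in \Cs(G \times_H Y, V)$ one recovers $f \in \Inds_H^G \Cs(Y, V)$ by pulling back and twisting by the $G$-action, so $\mu$ is bijective. The verification that $\mu$ is $G$-equivariant and linear is the same formal calculation as in the continuous case and I would not dwell on it.

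Next, I would establish that $\mu$ is a topological isomorphism with respect to the $\Cs$-topologies. The key intermediate step is the smooth exponential law
\[
\Cs(G \times Y, V) \; \cong \; \Cs(G, \Cs(Y, V)),
\]
which is the smooth analogue of \eqref{eq:adjointc} and is standard for smooth maps into a quasi-complete locally convex space (see e.g.\ \cite[Chapter 1, \S 10]{Groth_tvs}). Restricting to the $H$-invariant subspaces yields a homeomorphism between $\Inds_H^G \Cs(Y, V)$ and the $H$-invariant smooth functions $\Cs(G \times Y, V)^H$. I would then push forward this invariant subspace to $\Cs(G \times_H Y, V)$ using the smooth analogue of Lemma \ref{lem:cpushfw}: since $G$ is a Lie group and $H$ is closed, $G \to G/H$ admits local smooth sections, so by the differentiable analogue of Lemma \ref{lem:cfiberbundles} the quotient $G \times Y \to G \times_H Y$ is a smooth $H$-principal bundle admitting local smooth sections. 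The smooth push-forward is therefore a homeomorphism in the $\Cs$-topology by the same local-patching argument as in the continuous case, now using a smooth partition of unity subordinate to a locally finite trivializing cover.

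Finally, I would check that $\mu$ is actually an isomorphism of \emph{differentiable} $G$-modules, i.e.\ that the $G$-action on $\Cs(G \times_H Y, V)$ inherited from translation on $G \times_H Y$ matches the induced differentiable module structure on $\Inds_H^G \Cs(Y, \res_H^G V)$ (see Section \ref{sec:smoothShapi}); both reduce to the smoothness of the orbit maps $g \mapsto g \cdot F$, which follows from the smooth exponential law applied once more.

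The only mildly delicate step is the smooth exponential law and the push-forward through the smooth principal bundle quotient: one must take care that the $\Cs$-topology is stable under these operations when the target is an infinite-dimensional quasi-complete locally convex space. Once this is granted (and it is classical, cf.\ \cite{Cass,Schwartz}), the remainder of the argument is formally identical to the continuous case of Lemma \ref{lem:ind}.
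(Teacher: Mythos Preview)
Your proposal is correct and follows essentially the same route as the paper: define $\mu$ by the same formula \eqref{eq:corresp_maps}, invoke the smooth exponential law \eqref{eq:adjoints} (the paper cites the same source \cite[Ch.~1, \S10]{Groth_tvs}), and use local triviality of the smooth $H$-principal bundle $G\times Y \to G\times_H Y$ to obtain the smooth push-forward homeomorphism \eqref{eq:spushfw}. The only cosmetic difference is that the paper appeals to the differentiable slice theorem for the principal-bundle structure and to the identity $\Cs(A\times H,V)^H=\Cs(A,V)$ on local trivializations, whereas you phrase the same step via local smooth sections of $G\to G/H$ and a partition-of-unity patching; your added remark verifying the differentiable $G$-module structure is a harmless elaboration.
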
   
\begin{proof} First note that for any two smooth manifolds $A, B$, we have (\cf \cite[Ch. 1, \S 10 and
Ch. 3. \S 8]{Groth_tvs}) the adjoint formula
\begin{equation} \label{eq:adjoints} 
 \Cs(A, \Cs(B,V))  \, =  \, \Cs(A \times B,V )  \; . 
\end{equation} 

Also, since $H$ acts properly and freely on $G \times Y$, the differentiable slice theorem 
(compare \eqref{eq:stube}), shows that the quotient map $$G \times Y  \to G \times_{H} Y$$ is a smooth  
$H$-principal bundle map which is locally trivial. 

Now, for any trivial $H$-principal bundle $A \times H$,   
$\Cs(A \times H,V)^{H}  =  \Cs(A,V)$. This allows to show the homeomorphism 
\begin{equation} \label{eq:spushfw} 
\Cs(G \times Y,V)^{H} \to \Cs(G \times_{H} Y,V) \end{equation}
 (compare Lemma \ref{lem:cpushfw}). 
The proof of Lemma \ref{lem:ind_smooth} carries now through analogously to the one of Lemma \ref{lem:ind} (1), using  \eqref{eq:adjoints} and \eqref{eq:spushfw}.  
\end{proof} 

\begin{corollary} [Smooth Shapiro lemma for actions on functions]
\label{cor:Shapi_smooth} 
There  is a natural isomorphism 
$$  H^r_{d}  \left(H,  \Cs(Y,\res_H^G V) \right) \, \to  \, H^{r}_{d} \left(G, \Cs(G \times_{H} Y,V) \right) \; . $$
\end{corollary}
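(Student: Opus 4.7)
The plan is to chain the two preceding results functorially. First, apply Lemma \ref{lem:ind_smooth} with the differentiable $H$-module $W = \Cs(Y, \res_H^G V)$. This yields a natural isomorphism of differentiable $G$-modules
$$ \mu: \; \Inds_H^G \, \Cs(Y, \res_H^G V) \; \stackrel{\cong}{\lra} \; \Cs(G \times_H Y, V). $$
Since the differentiable cohomology functor $H^r_d(G, \cdot)$ sends isomorphisms of differentiable $G$-modules to isomorphisms, $\mu$ induces
$$ H^r_d\!\left(G, \, \Inds_H^G \Cs(Y, \res_H^G V)\right) \; \cong \; H^r_d\!\left(G, \, \Cs(G \times_H Y, V)\right). $$

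Second, apply the smooth Shapiro lemma (Proposition \ref{Shapi_smooth}) with the same choice $W = \Cs(Y, \res_H^G V)$. This gives the natural isomorphism
$$ H^r_d\!\left(H, \, \Cs(Y, \res_H^G V)\right) \; \cong \; H^r_d\!\left(G, \, \Inds_H^G \Cs(Y, \res_H^G V)\right). $$
Composing the two isomorphisms produces the desired natural identification and completes the proof.

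The only point requiring verification is that $W = \Cs(Y, \res_H^G V)$ genuinely qualifies as a differentiable $H$-module in the sense of Section \ref{sec:diff_cohomology}, so that both Lemma \ref{lem:ind_smooth} and Proposition \ref{Shapi_smooth} actually apply. The locally convex, Hausdorff and quasi-completeness properties of $W$ are inherited from those of $V$ via the remarks at the top of Section \ref{sec:smooth_coefficients}. The smoothness conditions (1) and (2) for the $H$-action on $W$ follow from the fact that $H$ acts smoothly on $Y$ (as the restriction of the smooth $G$-action) and smoothly on $V$; this is precisely the statement, recalled in Section \ref{sec:diff_cohomology}, that $\Cs(X,V)$ is a differentiable $G$-module whenever $X$ is a differentiable $G$-space and $V$ is a differentiable $G$-module. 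I do not anticipate any serious obstacle beyond this bookkeeping, since the entire argument reduces to a functorial composition of results already established in the preceding subsections.
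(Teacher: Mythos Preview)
Your proof is correct and matches the paper's (implicit) argument exactly: the corollary is the immediate composition of Lemma \ref{lem:ind_smooth} with Proposition \ref{Shapi_smooth}, and the paper offers no separate proof. One small inaccuracy in your bookkeeping paragraph: in the paper's setup $Y$ is a differentiable $H$-space in its own right, not the restriction of a $G$-space, so the parenthetical ``as the restriction of the smooth $G$-action'' should be dropped; the cited fact that $\Cs(Y,V)$ is a differentiable $H$-module applies directly with $H$ in place of $G$.
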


\smallskip 
\paragraph{\em Equivariant reciprocity lemma}
Let $X$ be a differentiable $G$-space. Then $X$ is also a differentiable $H$-space by restriction. We denote this space by $\res_H^G \, X $. 

\smallskip
We mention the analogue of Lemma \ref{lem:ind} (2):  

\begin{lemma} \label{lem:ind2_smooth}
There is a natural isomorphism of differentiable $G$-modules 
$$  \nu:  \Cs\left( X, \Inds^{G}_{H} \, W \right)  \, \to \,  \Inds^{G}_{H} \; \,  \Cs \left( \res_H^G \, X, W \right) \; . $$ 
\end{lemma}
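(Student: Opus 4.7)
The plan is to construct $\nu$ explicitly via a ``twisted'' currying, and to verify the claim by reducing its topological content to the smooth adjointness $\Cs(A,\Cs(B,V))\cong\Cs(A\times B,V)$ already used in the proof of Lemma \ref{lem:ind_smooth}.

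First I would define $\nu$. Given $f\in\Cs(X,\Inds_{H}^{G}W)$, set
\[
\nu(f)(g)(x)\,:=\,f(g\cdot x)(g)\in W,\qquad g\in G,\ x\in X.
\]
The twist of the $X$-argument by the $G$-action is forced: naive currying would send the defining condition $f(x)(gh^{-1})=h\cdot f(x)(g)$ to $\phi(gh^{-1})(x)=h\cdot\phi(g)(x)$, whereas membership in $\Inds_{H}^{G}\Cs(\res_{H}^{G}X,W)$ requires, with respect to the $H$-action on $\Cs(\res_{H}^{G}X,W)$ from \eqref{eq:induced_action}, the condition $\phi(gh^{-1})(x)=h\cdot\phi(g)(h^{-1}x)$. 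A short calculation using $f(gh^{-1}\cdot x)(gh^{-1})=h\cdot f(gh^{-1}\cdot x)(g)$ and $(gh^{-1})\cdot x=g\cdot(h^{-1}x)$ shows that the twisted formula above does satisfy this equivariance.

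Next I would check $G$-equivariance. With the $G$-action on $\Cs(X,\Inds_{H}^{G}W)$ given by \eqref{eq:induced_action} and the usual left-regular action $(\alpha\cdot\phi)(g)=\phi(\alpha^{-1}g)$ on $\Inds_{H}^{G}\Cs(\res_{H}^{G}X,W)$, one unwinds
\[
\nu(\alpha\cdot f)(g)(x)=(\alpha\cdot f)(g\cdot x)(g)=f(\alpha^{-1}g\cdot x)(\alpha^{-1}g)=\nu(f)(\alpha^{-1}g)(x)=(\alpha\cdot\nu(f))(g)(x).
\]
An explicit two-sided inverse is given by $\nu^{-1}(\phi)(x)(g):=\phi(g)(g^{-1}\cdot x)$; checking that it lands in $\Cs(X,\Inds_{H}^{G}W)$ and that $\nu^{-1}\circ\nu=\mathrm{id}$, $\nu\circ\nu^{-1}=\mathrm{id}$ is the same kind of direct substitution as above.

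For the topological statement I would invoke the smooth adjointness
\[
\Cs(X,\Cs(G,W))\,\cong\,\Cs(X\times G,W)\,\cong\,\Cs(G,\Cs(X,W))
\]
from \cite[Ch.~1, \S 10 and Ch.~3, \S 8]{Groth_tvs}, together with the observation that $(x,g)\mapsto(g\cdot x,g)$ is a diffeomorphism of $X\times G$ with smooth inverse $(x,g)\mapsto(g^{-1}\cdot x,g)$, so that precomposition with it is a topological linear automorphism of $\Cs(X\times G,W)$. The composition of these three identifications yields a homeomorphism of ambient function spaces whose restriction to $\Cs(X,\Inds_{H}^{G}W)$ coincides with $\nu$ and maps onto $\Inds_{H}^{G}\Cs(\res_{H}^{G}X,W)$; both subspaces are closed (cut out by the linear continuous equivariance conditions), so $\nu$ is itself a topological linear isomorphism, and thus an isomorphism of differentiable $G$-modules.

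The main obstacle is purely bookkeeping: one has to keep track of two different $H$-actions (on $W$ and on $\Cs(\res_H^G X,W)$) as well as the twist by the $G$-action, and to confirm that the twist is implemented by a \emph{diffeomorphism} of $X\times G$, so that the induced pullback is a topological linear automorphism in the $\Cs$-topology. No new analytic input beyond the smooth adjointness and the smoothness of the $G$-action on $X$ is required.
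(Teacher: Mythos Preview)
Your proposal is correct and follows essentially the same approach as the paper. In fact the paper gives no proof of this lemma at all, merely introducing it as the smooth analogue of part \eqref{eq:ind2} of Lemma~\ref{lem:ind}, whose proof in turn consists of the single remark ``follows similarly, using the adjointness~\eqref{eq:adjointc}''; you have supplied exactly the details this remark suppresses, including the explicit formula for $\nu$, the twist by the $G$-action on $X$ (implemented via the diffeomorphism $(x,g)\mapsto(g\cdot x,g)$ of $X\times G$), and the verification that the resulting bijection restricts to a topological isomorphism between the closed $H$-equivariant subspaces.
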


\subsubsection{Local vanishing for smooth coefficients} 
\hspace{1cm} \smallskip

\paragraph{\em Differentiable slices and smooth $G$-tubes} Let  $\cS_{p}$ be a differentiable slice at $p \in X$, 
and $U_{p} =  G \cdot \cS_{p}$  the corresponding smooth $G$-tube. 
By definition a differentiable slice $\cS_{p}$ is a submanifold such that the natural map 
\begin{equation} \label{eq:stube} G \times_{G_{p}} \cS_{p} \to U_{p}
\end{equation} is a  $G$-equivariant diffeomorphism (see \cite[\S2 Lemma 4]{JK}).
As before, for any,  $G$-invariant open subset $U \subseteq U_{p}$ of $X$, $p \in U$, define  $ \cS_{U} =   \cS_{p} \cap U$, which is an open submanifold of the manifold $ \cS_{p}$. Moreover,  
then   $ \cS_{U}$ is a differentiable slice with tube $U$.  In fact, since  \eqref{eq:stube}  is a  $G$-equivariant diffeomorphism,  
so are the restricted maps 
$G \times_{G_{p}} \cS_{U} \to U$. 
In particular, if a tube $U_{p}$, exists we may always find a differentiable slice $S_{U}$ near $p$ which has compact closure.  That is,  by taking $U = G \cdot S$, where $S$ is  a $G_{p}$-invariant neighborhood of $p$ in $S_{p}$ with compact closure in $S_{p}$. 

\begin{lemma}[Smooth $G$-tubes] \label{lem:tubesc_smooth}
There is a natural  isomorphism of differentiable $G$-modules 
\begin{equation} \label{eq:ind_tubes}
{\mathrm{Ind}_{G_{p}}^G \, \C^\infty}(\cS_{U},\res^G_{G_p} V)  \, \to  \, \C^{\infty}(U,V) \,  \text{ and }     
\end{equation}
\begin{equation}\label{eq:smallvanish2}
H^r _{d}\left(G,\C^{\infty}(U, V)\right)= \zsp ,  \; \text{ for all } r \geq 1.
\end{equation}
\end{lemma}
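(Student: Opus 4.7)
My plan is to transplant the proof of the continuous tube lemma (Lemma \ref{lem:tubesc}) and the continuous local vanishing lemma (Lemma \ref{lem:local_vanish}) into the smooth category, with every continuous ingredient replaced by its already-established differentiable counterpart. The structural scaffolding — smooth slices, the smooth Shapiro lemma, the smooth version of the $\Ind$-functor — has been put in place in the preceding material, so the argument should be essentially formal.

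For the isomorphism \eqref{eq:ind_tubes}, I would first appeal to the differentiable slice theorem. The map \eqref{eq:stube} restricts to a $G$-equivariant diffeomorphism $G \times_{G_p} \cS_U \to U$, and pulling back $V$-valued smooth functions along this diffeomorphism identifies $\C^\infty(U, V)$ with $\C^\infty(G \times_{G_p} \cS_U, V)$ as differentiable $G$-modules. I would then apply Lemma \ref{lem:ind_smooth} with $H = G_p$ and $Y = \cS_U$ to obtain
$$ \C^\infty(G \times_{G_p} \cS_U, V) \, \cong \, \Inds_{G_p}^G \, \C^\infty(\cS_U, \res^G_{G_p} V) $$
as differentiable $G$-modules, which yields the first assertion.

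For the vanishing \eqref{eq:smallvanish2}, I would invoke the smooth Shapiro lemma in its induced-module form (Corollary \ref{cor:Shapi_smooth}, applied with $H = G_p$ and $Y = \cS_U$), which combined with the preceding isomorphism gives a natural identification
$$ H^r_d\bigl(G, \C^\infty(U, V)\bigr) \, \cong \, H^r_d\bigl(G_p, \, \C^\infty(\cS_U, \res^G_{G_p} V)\bigr) . $$
Since $G$ acts properly, $G_p$ is a compact Lie subgroup of $G$. The restricted module $\res^G_{G_p} V$ is differentiable, hence so is $\C^\infty(\cS_U, \res^G_{G_p} V)$ as a $G_p$-module. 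Finally, Lemma \ref{lem:smooth_van_compact}, applied to the compact Lie group $G_p$, forces the right-hand side to vanish for every $r \geq 1$.

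No step here is a genuine obstacle, since the difficult technical work — namely, the smooth Shapiro lemma (Proposition \ref{Shapi_smooth}) together with the pushforward identification \eqref{eq:spushfw} along the principal $H$-bundle $G \times Y \to G \times_H Y$ used in Lemma \ref{lem:ind_smooth} — has already been done. The only detail worth checking carefully is that all identifications are genuinely isomorphisms of differentiable $G$-modules, not merely of topological $G$-modules; this is however immediate from the smoothness of the slice diffeomorphism and the natural smooth adjunction $\C^\infty(A, \C^\infty(B, V)) \cong \C^\infty(A \times B, V)$ underlying the proof of Lemma \ref{lem:ind_smooth}.
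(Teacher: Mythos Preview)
Your proof is correct and follows essentially the same route as the paper: apply Lemma~\ref{lem:ind_smooth} (together with the slice diffeomorphism \eqref{eq:stube}) to obtain \eqref{eq:ind_tubes}, then use the smooth Shapiro lemma (Proposition~\ref{Shapi_smooth}) and compactness of $G_p$ to deduce \eqref{eq:smallvanish2}. Your write-up is in fact a bit more explicit than the paper's about why $G_p$ is compact and why the identifications are isomorphisms of differentiable $G$-modules, but the argument is the same.
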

\begin{proof}
By Lemma \ref{lem:ind_smooth}, $ \Inds_{G_{p}}^G \, \C^{\infty}(\cS_{U} \, , \res^G_{G_p} V) = \C^{\infty}\left (G \times_{G_{p}} \cS_{U} \,,\, V \right)$. 
Using the differentiable Shapiro's lemma (Proposition \ref{Shapi_smooth}) we conclude 
\begin{equation*}
 H^r_{d} \left( G, {\Inds}_{G_{p}}^G \, \C^{\infty}(\cS_{U},V) \right) = 
H^r_{d}\left (G_p, \C^{\infty}(\cS_{U}, V) \right) \; . 
\end{equation*}
Since $G_p$ is compact, we have 
$H^r \left(G_p,\Cs(\cS_{U}, V) \right) = \zsp$, $r\geq 1$. 
\end{proof} 

\subsection{Proof of Theorem \ref{thm:smooth_vanish}}
\hspace{1ex} \smallskip

\noindent 
Let $X$ be a differentiable $G$-space and $V$ a differentiable $G$-module. 
We are also assuming that $X$ is a proper $G$-space. As before we let
 $$\pi: X \to X/G$$  denote the quotient map for the $G$-action. 
Then $X/G$ is
a locally compact, paracompact Hausdorff space, since $G$ acts properly.
Moreover, by the differentiable slice theorem \cite[\S2 Lemma 4]{JK} differentiable 
slices do exist for every $p \in X$.  Therefore,  
the local vanishing property for the differentiable cohomology of $G$ 
with coefficients in the function space $ \C^{\infty}(X, V)$ is satisfied
by application of Lemma \ref{lem:tubesc_smooth}. 

\smallskip   
\paragraph{\em Differentiable structure sheaves}
Following Section \ref{sect:sheaves_c}, we are now considering the properties of various sheaves of functions on $X$ and $X/G$ which are associated to the action of $G$.
First let
$$  \Cs_{X,V}  = \Cs( \cdot, V)$$  
denote the sheaf of smooth functions on $X$ with values in 
$V$. Since $G$ acts on $X$ and $V$, the sheaf $\Cs_{X,V}$ has an action of $G$ by co-morphisms which is defined by  \eqref{eq:induced_action}. Furthermore, let  $$ \Cs_{X}  \text{ and } \Cs_{X/G}$$  denote the structure 
sheaves of smooth real valued functions on $X$ and $X/G$,  respectively. 
Note that $X/G$ is in general not a smooth manifold, but we can define 
$$ \Cs_{X/G} =  \pi_{*}^{G} \,  \Cs_{X} \; . $$   
Similarly, for any differentiable $G$-module $V$, 
we have the equivariant direct image sheaf  $\pi_{*}^G \, \Cs_{X,V}$
(compare Section \ref{sect:sheaves_c}).

\begin{lemma} \label{lem:fine_s}
The sheaves $\pi_{*}^G \, \Cs_{X,V}$ are fine sheaves.
\end{lemma}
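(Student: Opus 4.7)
The plan is to mirror the argument for Lemma \ref{lem:fine_c}(2) in the smooth setting, reducing the problem to the existence of smooth $G$-invariant partitions of unity on $X$ adapted to $G$-invariant open covers. First I observe that $\pi_{*}^G \, \Cs_{X,V}$ is naturally a sheaf of modules over the sheaf of rings $\Cs_{X/G} = \pi_{*}^G \, \Cs_X$. Indeed, if $\phi \in \Cs_X(\pi^{-1}(U))$ is $G$-invariant in the usual sense and $f \in \Cs_{X,V}(\pi^{-1}(U))$ satisfies $\alpha \cdot f(\alpha^{-1}x) = f(x)$ for all $\alpha \in G$ (i.e.\ is a section of $\pi_{*}^G \Cs_{X,V}$), then $\phi f$ again satisfies $\alpha \cdot (\phi f)(\alpha^{-1}x) = \phi(\alpha^{-1}x) \, \alpha \cdot f(\alpha^{-1}x) = \phi(x) f(x)$, so $\phi f$ lies in $\pi_{*}^G \Cs_{X,V}$. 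By the standard fact \cite[Theorem 9.16]{Bredon_sheaves} that any sheaf of modules over a fine sheaf of rings is fine, it suffices to prove that $\Cs_{X/G}$ is a fine sheaf of rings on $X/G$.

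For this I have to produce, for every locally finite open cover $\{U_\alpha\}$ of $X/G$, a subordinate partition of unity by sections of $\Cs_{X/G}$; equivalently, a $G$-invariant smooth partition of unity on $X$ subordinate to the $G$-invariant locally finite open cover $\{\pi^{-1}(U_\alpha)\}$. The construction proceeds via the differentiable slice theorem \cite[\S 2 Lemma 4]{JK}: at each $p \in X$, fix a differentiable slice $\cS_p$ with compact closure such that $U_p = G \cdot \cS_p \cong G \times_{G_p} \cS_p$ is contained in some $\pi^{-1}(U_{\alpha(p)})$. Pick a smooth nonnegative bump on $\cS_p$ supported in a compact neighbourhood of $p$ and average it over the compact stabilizer $G_p$ to obtain a $G_p$-invariant smooth bump $\psi_p$ on $\cS_p$; extending trivially along $G$-orbits through the tube decomposition produces a $G$-invariant smooth function $\tilde\psi_p$ on $X$ whose support is the $G$-saturation of a compact set in $\cS_p$, hence closed in $X$ and contained in $\pi^{-1}(U_{\alpha(p)})$. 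Since $X/G$ is paracompact, the cover of $X/G$ by the sets $\pi(U_p)$ admits a locally finite refinement, giving countably many such $\tilde\psi_i$ whose supports form a locally finite family in $X$, with $\sum_i \tilde\psi_i > 0$ everywhere. Normalizing and then summing those $\tilde\psi_i/\sum_j\tilde\psi_j$ whose support lies in $\pi^{-1}(U_\alpha)$ yields the desired $G$-invariant smooth partition of unity $\{\phi_\alpha\}$, which descends to the required partition in $\Cs_{X/G}$.

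The only delicate point in this plan is the existence of the $G$-invariant smooth partition of unity on a proper $G$-manifold; the rest is formal. This existence is standard for proper actions (essentially Koszul's theorem, cf.\ the slice theory used already in Lemma \ref{lem:tubesc_smooth}), and reduces the smooth case to the same formal argument used in the continuous case, so no new obstruction appears beyond the (routine) verification that smoothness is preserved under averaging over the compact stabilizers $G_p$ and under the trivial extension along orbits via the diffeomorphism $G \times_{G_p} \cS_p \cong U_p$.
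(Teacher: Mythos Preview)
Your proposal is correct and follows essentially the same route as the paper: reduce to showing that $\Cs_{X/G}=\pi_*^G\Cs_X$ is a fine sheaf of rings, then construct a $G$-invariant smooth partition of unity using differentiable slices, averaging over the compact stabilizers $G_p$, extending along tubes, and normalizing. The only minor difference is that the paper first takes a \emph{continuous} partition of unity on $X/G$ and then smoothly approximates it on each slice (invoking an approximation argument \`a la \cite{Bredon_compact_trans}), whereas you start directly with smooth bumps on the slices; your variant is slightly more streamlined but otherwise identical in substance.
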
 
\begin{proof}  Since  $\pi_{*}^G \, \Cs_{X,V}$ is a sheaf of $\Cs_{X/G} =  \pi_{*}^{G} \,  \Cs_{X} $-modules, it is sufficient to show that the latter is a fine sheaf of rings (see for example \cite[\S 9]{Bredon_sheaves}). 
This amounts to constructing, for any
covering of the form $\{ \pi^{-1}(U_{j})\}$ of $X$, where 
$\{U_{j}\}$ is a  locally finite covering of  $X/G$,  an associated subordinate partition of unity 
by $G$-invariant smooth functions $\{ \tilde \eta_{j} \in \Cs({X})^{G}\}$. 
To this end,  it is sufficient to construct a partition of unity  subordinate to any locally finite covering of $X$ by $G$-invariant open subsets that refines the covering $\{ \pi^{-1}(U_{j})\}$.  
Therefore, since $X$ is covered by $G$-tubes, 
we may assume that $\pi^{-1}(U_{j}) = G \times_{G_{j}} S_{j}$  is a 
differentiable $G$-tube and 
that $S_{j}$ has compact closure. In particular, $U_{j}$ has compact closure 
in $X/G$. 

Since $X/G$ is paracompact, the covering $\{U_{j}\}$ 
has a subordinate continuous partition of unity $\epsilon_{j}$.  
Therefore the functions
$\epsilon_{j} \circ \pi \in \C(X)^{G}$ give a continuous 
$G$-invariant partition of unity subordinate to $\{\pi^{-1}(U_{j})\}$. 
   
Moreover, we may arrange things that there exist open $G$-invariant 
subsets $W^{1} \subset W^{2} \subset S_{j}$, which in some coordinate system 
are diffeomorphic to Euclidean balls of radius $1$, respectively $2$, such that
$$K_j = \supp{\;  {(\epsilon_j \circ \pi)}_{| \cS_j}}  \subset W^{1} . $$

Using the same technique as employed in the proof of  \cite[Ch. VI, \S4 4.2 Theorem]{Bredon_compact_trans} (for reference on approximation of continuous functions by smooth functions, see 
 \cite{MI} and \cite[Theorem 2.2]{Hirsch}),
we may approximate
$\epsilon'_j$ by a smooth function $$ \eta'_j:S_j\,\ra \,  \RR$$ such that
\begin{enumerate}
\item[(1)]  $\eta'_j$ is a smooth (positive) function on $K_{j}$, 
 \item[(2)] $\eta'_j=0$ on $\cS_j -  \overline{W}^{1}$ (where  $ \overline{W}^{1}$ denotes the closure of $W^{1}$).
\end{enumerate}

Next let $\eta''_j \in \C^{\infty}(\cS_{j}, \RR)$ be defined by
\[
\eta''_j(x)=\int_{G_j}\eta'_j(gx) \,  dg.\]
Then $\eta''_j$ is a $G_j$-invariant nonnegative smooth function.
Since $W^{1}$ is $G_j$-invariant, $\eta''_j$ also satisfies  $(1)$, $(2)$.
In particular, note that 
\[ \supp {\epsilon'_j} \, \subseteq \, \supp {\; \eta''_j} \; . \]
As the restriction map
$\displaystyle \Cs(\pi^{-1}(U_j),\RR)^G\, \to \,  \Cs(S_j,\RR)^{G_j}$
is bijective (as follows from \eqref{eq:ind_tubes}, for example),  
we obtain a
$G$-invariant smooth function $\eta_j$ on $\pi^{-1}(U_j)$, which restricts to $\eta''_j$
on $\cS_j$.

With the above provisions in place, and taking into account that $\pi: \cS_{j} \to U_{j}$ is a quotient 
map (see \cite[Ch. VI, Proposition 3.3]{Bredon_compact_trans}), it follows that $G \cdot (\cS_j -  \overline{W}^{1})$ is an open subset of 
$\pi^{-1}(U_j)$ and a neighborhood of the boundary of the open subset $G \cdot  W_{2}$.  
Therefore, extension by $0$ outside $\pi^{-1}(U_j)$ shows that $\eta_j$ 
arises as the restriction of a smooth  $G$-invariant function $\eta_{j}$ defined on $X$ 
with support in $\pi^{-1}(U_j)$. 

By construction, $ a = \sum_{j} \eta_j  \in \Cs(X,\RR)$, 
 is everywhere positive on $X$
and $G$-invariant. Thus $$ \tilde \eta_j =\frac 1{a} \,  \eta_j$$  defines
a partition of unity by $G$-invariant smooth functions subordinate 
to the covering by $G$-neighborhoods $\{ \pi^{-1}(U_j) \}$.
\end{proof} 

We conclude the proof of Theorem \ref{thm:smooth_vanish} using the reasoning developed in Section \ref{sect:sheaves_c} as follows:

\smallskip
First of all, in the view of \eqref{eq:smallvanish2} we have
a resolution of sheaves
\begin{equation} \label{eq:resolution_s}
\zsp  \, \lra  \,   \pi_{*}^G \, \Cs_{X,V}   \lra  \, (C^\infty_{X,V})^0  \stackrel{\partial^{0}}{\lra} \, (C^\infty_{X,V})^1   \stackrel{\partial^{1}}{\lra}\,  (C^{\infty}_{X,V})^2 \stackrel{\partial^{2}}{\lra}  \ldots  \, \; \; , 
\end{equation} 
where $(C^\infty_{X,V})^r$ are sheaves on $X/G$ whose sections over $U$ are differentiable $r$-cochains of $G$ with coefficients in smooth functions $\Cs(\pi^{-1}(U),V)$.  Since the sheaves $(C^\infty_{X,V})^r$ are sheaves of $\Cs_{X/G}$ modules (which is a fine sheaf of rings by Lemma \ref{lem:fine_s}) these are fine sheaves. It follows 
\begin{equation} \label{eq:coho_ident_s}
 H^{r}_d \left(G, \Cs(X,V) \right) \, \cong \, H^{r} \left( \pi_{*}^G \, \Cs_{X,V} \right) \, . 
\end{equation}
Finally, by Lemma \ref{lem:fine_s}, $\pi_{*}^G \, \Cs_{X,V}$ is a fine sheaf. It follows that the right hand cohomology in \eqref{eq:coho_ident_s} is acyclic. Therefore,
$$ H^{r}_d \left(G, \Cs(X,V) \right) = \zsp\,  , \, r \geq 1 \, . $$
This finishes the proof of Theorem \ref{thm:smooth_vanish}.

\section{Proof of Theorems \ref{thm:main_CR} and Theorem \ref{thm:main_conformal}} \label{sec:proofs_main}

Given a pseudo-Hermitian structure $\{ \om, J \}$ on $M$, the distribution $$ D = \ker \om$$ defines  a strictly pseudo-convex $CR$-structure $\{D, J \}$ on $M$. We have the following naturally  associated transformation groups, namely
the group  $$ \Psh(M,\{\om,J\}) =\{\, \alpha\in\Diff(M)\mid \alpha^{*}\om=\om, \;  \alpha_*\circ J=J\circ \alpha_*|_{\sfD} \,\} $$ of pseudo-Hermitian transformations and the group  of $CR$- automorphisms 
$$ \Aut_{CR}(M, \{\sfD,J\}) =\{ \, \alpha\in\Diff(M)\mid \alpha_*\sfD=\sfD, \;   \alpha_*\circ J=J\circ \alpha_*|_{\sfD} \, \} \; \;  . $$
In general, the inclusion $$\Psh(M,\{\om,J\}) \, \leq \, \Aut_{CR}(M, \{\sfD,J\})$$ is strict, since the contact form $\om$ is determined
by $D$ only up to conformal equivalence. 
Moreover, the Lie group $\Psh(M,\{\om,J\})$ always acts properly on $M$, whereas in some  cases 
(as detailed in Theorem \ref{thm:schoen}) the $CR$-automorphism group $\Aut_{CR}(M, \{\sfD,J\})$ is too large and doesn't act properly on $M$. 
About the possible relations of the group $\Psh(M,\{\om,J\})$ and  the $CR$-automorphism group 
$\Aut_{CR}(M, \{\sfD,J\})$, we shall prove: 

\begin{theorem} \label{thm:proper_cr}
Let $(M,\{\sfD,J\})$ be a strictly pseudo-convex $CR$-manifold and let 
$  G \, \leq \, \Aut_{CR}(M, \{D,J\})$ 
be a subgroup of $CR$-automorphisms that acts properly on $M$. Then there exists on $M$ a  pseudo-Hermitian structure $\{ \eta, J\}$ compatible with $\{\sfD,J\}$, such that $ G \, \leq \, \Psh(M, \{ \eta, J\} ) \, . $ \end{theorem}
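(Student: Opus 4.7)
The strategy is to reduce to the vanishing theorem (Theorem \ref{thm:main_h1}) by producing a multiplicative cocycle that measures the failure of an arbitrary representative contact form to be $G$-invariant. First, pick any pseudo-Hermitian structure $\{\om,J\}$ on $M$ compatible with the given $CR$-structure, i.e.\ $\ker\om=\sfD$. For every $\alpha\in G$ the diffeomorphism $\alpha$ preserves $\sfD$ and $J$, so $(\alpha^{-1})^*\om$ is another compatible contact form with the same kernel and the same sign of Levi form; consequently there is a unique positive smooth function $\lambda(\alpha)\in\Cs(M,\RR^{>0})$ with
\[
(\alpha^{-1})^{*}\om \, = \, \lambda(\alpha)\cdot\om \, .
\]
Smoothness of the $G$-action on $M$ (together with the $\Cs$-topology on functions) implies that $\alpha\mapsto\lambda(\alpha)$ is smooth from $G$ into $\Cs(M,\RR^{>0})$, so it defines a smooth inhomogeneous $1$-cochain.

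Next, a short computation from $(\alpha\beta)^{-1}=\beta^{-1}\alpha^{-1}$ and the contravariance of pullback yields
\[
\lambda(\alpha\beta)(x)=\lambda(\alpha)(x)\cdot\lambda(\beta)(\alpha^{-1}x) \, ,
\]
and passing to $L(\alpha):=\log\lambda(\alpha)\in\Cs(M,\RR)$ this becomes the additive cocycle identity
\[
L(\alpha\beta) \, = \, L(\alpha) \, + \, \alpha\cdot L(\beta) \, ,
\]
where the $G$-action on $\Cs(M,\RR)$ is $(\alpha\cdot f)(x)=f(\alpha^{-1}x)$, as in \eqref{eq:induced_action}. Thus $L$ represents a class in $H^{1}_{d}(G,\Cs(M,\RR))$. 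Since $G$ acts properly, Theorem \ref{thm:main_h1} yields $H^{1}_{d}(G,\Cs(M,\RR))=\zsp$, so $L$ is a coboundary: there exists $u\in\Cs(M,\RR)$ with $L(\alpha)=\alpha\cdot u-u$, equivalently
\[
\lambda(\alpha) \, = \, \frac{\,e^{u}\circ\alpha^{-1}}{e^{u}} \, .
\]

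Finally, set $\eta:=e^{-u}\cdot\om$. Then $\ker\eta=\sfD$ and a direct substitution gives
\[
(\alpha^{-1})^{*}\eta \, = \, (e^{-u}\circ\alpha^{-1})\,\lambda(\alpha)\,\om \, = \, e^{-u}\,\om \, = \, \eta \, ,
\]
so $\alpha^{*}\eta=\eta$ for every $\alpha\in G$. Because $\alpha$ still preserves $J$ on $\ker\eta=\sfD$, it follows that $G\leq\Psh(M,\{\eta,J\})$, as required. The only delicate point in this outline is the smoothness claim for the cocycle $\alpha\mapsto\lambda(\alpha)$ with values in the locally convex space $\Cs(M,\RR)$; this should follow from joint smoothness of the action map $G\times M\to M$ via the canonical adjunction $\Cs(G\times M,\RR)\cong\Cs(G,\Cs(M,\RR))$ (compare \eqref{eq:adjoints}) together with the identity $\lambda(\alpha)=(\alpha^{-1})^{*}\om/\om$, and is essentially the same verification needed to make $\mu_{\mathrm{CR}}\in H^{1}_{d}(G,\Cs(M,\RR^{>0}))$ well defined as a differentiable class in Section \ref{sec:cr}.
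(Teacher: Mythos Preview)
Your proof is correct and follows essentially the same approach as the paper: you build the canonical multiplicative cocycle from a chosen compatible contact form, invoke the vanishing of $H^1_d(G,\Cs(M,\RR))$ for proper actions to trivialize it, and rescale $\om$ by the resulting coboundary to obtain a $G$-invariant $\eta$. The only cosmetic differences are that you define the cocycle via $(\alpha^{-1})^*\om$ (yielding the inverse of the paper's $\lambda_{\mathrm{CR}}$) and take the logarithm explicitly, whereas the paper packages these steps inside Proposition~\ref{pro:can_class_CR} and Theorem~\ref{thm:vanish}.
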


Together with Schoen's theorem (Theorem \ref{thm:schoen}), this result obviously implies Theorem \ref{thm:main_CR} in the introduction.  

\medskip 
This section is organized as follows: In subsection \ref{sec:crossed_hom}  we prepare the proof of Theorem \ref{thm:proper_cr} with a brief discussion of equivariant crossed homomorphisms which are associated to group actions on manifolds. Following that we construct the canonical cohomology class associated to the action of $CR$-automorphisms. The proof of Theorem  \ref{thm:proper_cr} is based on the vanishing of this class, see section  \ref{sec:cr}.  Analogous results for the conformal case and also for locally conformal K\"ahler manifolds will be discussed in subsections  \ref{sec:conf} and  \ref{sec:lcK}.

\smallskip 
\subsection{Equivariant crossed homomorphisms} \label{sec:crossed_hom}
Let $G$ be a Lie group which acts smoothly on the manifold $X$. We let 
$\RR^{>0}$ denote the multiplicative group of positive real numbers.
Next consider $$\Cs(X,\RR^{>0})$$ the group of all smooth maps from $X$ into $\RR^{>0}$ endowed with its natural $G$-module structure, where, for $\al\in G$, $f\in \Cs(X,\RR^{>0})$, 
\begin{equation}\label{eq:mapusua}
(\al \cdot f)(x)= f(\al^{-1}x) \, . 
\end{equation}
In fact, taking the $\Cs$-topology of maps, 
$\Cs(X,\RR^{>0})$ is a differentiable $G$-module in the sense of Section \ref{sec:diff_cohomology}, and 
it is isomorphic to the $G$-module $\C^\infty(X,\RR)$.
Concerning the associated \emph{differentiable cohomology} groups of $G$, 
we note:


\begin{theorem}  \label{thm:vanish}
Suppose that $G$ acts properly on $X$. Then, for all $r \geq 1$, we have 
$ \displaystyle  H^r_d \left(G , \Cs(X,\RR^{>0})\right) = \zsp$. 
\end{theorem}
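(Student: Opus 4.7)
The plan is to reduce the claim for the multiplicative coefficient module $\Cs(X,\RR^{>0})$ to the additive case $\Cs(X,\RR)$, and then invoke the main vanishing theorem for smooth coefficients, Theorem \ref{thm:smooth_vanish} (equivalently, Theorem \ref{thm:main_cohomology} in the introduction).

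First I would make the reduction explicit. The pointwise logarithm
\[
\log\colon \Cs(X,\RR^{>0}) \;\longrightarrow\; \Cs(X,\RR),\qquad f \,\mapsto\, \log \circ f,
\]
is a homeomorphism of locally convex topological abelian groups (sending multiplication to addition), where the target carries its usual additive structure as a locally convex topological vector space in the $\Cs$-topology. It is $G$-equivariant for the action $(\alpha\cdot f)(x)=f(\alpha^{-1}x)$ on both sides since $\log\circ(\alpha\cdot f)=\alpha\cdot(\log\circ f)$. Consequently it is an isomorphism of differentiable $G$-modules (the differentiability axioms in Section \ref{sec:diff_cohomology} transport across $\log$, and indeed this isomorphism is the one already remarked upon in the statement preceding the theorem).

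Next I would appeal to functoriality of the differentiable cohomology groups $H^{r}_{d}(G,\cdot)$ to obtain natural isomorphisms
\[
H^{r}_{d}\bigl(G,\Cs(X,\RR^{>0})\bigr) \;\cong\; H^{r}_{d}\bigl(G,\Cs(X,\RR)\bigr),\qquad r\geq 0.
\]
Since $G$ acts smoothly and properly on $X$ and $V=\RR$ is a (trivially) differentiable $G$-module, Theorem \ref{thm:smooth_vanish} applies and yields $H^{r}_{d}(G,\Cs(X,\RR))=\{0\}$ for every $r\geq 1$. Combining these two facts gives $H^{r}_{d}(G,\Cs(X,\RR^{>0}))=\{0\}$ for all $r\geq 1$, as required.

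There is no real obstacle here; the only point warranting care is to verify that $\log$ is genuinely an isomorphism of \emph{differentiable} $G$-modules in the sense of Section \ref{sec:diff_cohomology} — i.e., that the orbit maps $g\mapsto g\cdot f$ and the map $f\mapsto(g\mapsto g\cdot f)$ remain smooth after the identification. This is immediate because smoothness of these orbit maps depends only on smoothness of the $G$-action on $X$ (via pullback of functions), which is unchanged by the transport along $\log$. With this checked, the theorem is a direct corollary of the general vanishing result for proper actions.
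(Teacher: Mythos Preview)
Your proof is correct and essentially identical to the paper's: the paper uses the inverse map $\exp\colon \Cs(X,\RR)\to\Cs(X,\RR^{>0})$ to set up the same isomorphism of differentiable $G$-modules, checks $G$-equivariance, and then appeals to the general vanishing theorem for proper actions. The only difference is the direction of the identification (you take $\log$, the paper takes $\exp$), which is immaterial.
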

\begin{proof}
The map $\exp: \Cs(X,\RR) \, \ra \, \Cs(X,\RR^{>0})$
defined by $\displaystyle u=\exp f$,
that is,
\[ u(x)=e^{f(x)}\, , \;  x \in X \]
is easily seen to be an isomorphism of differentiable $G$-modules. In fact, the correspondence is $G$-equivariant, since, for all $\alpha \in G$, 
\begin{equation*}
\exp{(\al\cdot f)(x)} =e^{f(\al^{-1}x)} =u(\al^{-1}x)=(\al\cdot u)(x) \, .
\end{equation*}
Hence, for all $r$, there is an isomorphism of differentiable cohomology groups 
\begin{equation*}
H^r_d\left(G, \Cs(X,\RR)\right)\cong 
H^r_d\left(G, \Cs(X,\RR^{>0})\right)
\, .
\end{equation*}
Since $G$ acts properly, Theorem \ref{thm:vanish} implies
\begin{equation*}
H^r_d\left(G, \Cs(X,\RR^{>0})\right)= \zsp 
\, , \ r\geq 1.  \qedhere
\end{equation*}
\end{proof}

\subsubsection{Differentiable crossed homomorphisms}
Smooth one-cocycles $$\lambda \in \Cs\left(G,  \Cs(X,\RR^{>0})\right) \, , \; \partial^1 \lambda = 0$$ are representing the elements of the first \emph{differentiable}  cohomology group $$ H^1_{d}\left(G, \Cs(X,\RR^{>0})\right) \, .  $$
The condition $\partial^1 \lambda = 0$ amounts to 
the requirement that, for all $\al,\be\in G$, 
\begin{equation} \label{eq:crossed_hom}
\lam(\al  \be) \, (x)= \lam (\be)\,(\alpha^{-1} x )  \cdot \lam(\al) \, (x) \ \, . 
\end{equation} 
Denoting by $\alpha_*$ the covariant map on forms which is induced by $\alpha$, this relation can be written in the concise form 
\begin{equation} \label{eq:crossed_homc}
\lambda(\alpha \beta) = \, \alpha_* \, \lambda(\beta)\,\cdot   \lambda(\alpha) . 
\end{equation}
Such $\lambda$ are called \emph{differentiable crossed 
homomorphisms}.

\smallskip
\paragraph{\em Exact crossed homomorphisms}
For any crossed homomorphism $\lambda$, the cohomology class $$[\, \lambda \, ] \, \in \, H^1_{d}\left(G, \Cs(X,\RR^{>0})\right)$$ vanishes 
if there exists $ v\in \Cs(X,\RR^{>0})$, with
$ \partial^0 v=  \lam$. That is, if 
\begin{equation} \label{eq:coboundary} 
(\al_{*}  v)  \cdot  v^{-1}  =\lam(\al) \, , \text{ for all } \al \in G , 
\end{equation}
or,  equivalently, $\lambda(\alpha)(x)  = v(\alpha^{-1} x) \cdot v^{-1}(x)$, for all
$x \in X$.  
Moreover, two 
crossed homomorphisms $\lambda$ and $\lambda'$ represent the 
same class in  the group $H^1_{d}\left(G, \Cs(X,\RR^{>0})\right)$ if and only if 
$ \lambda' = \lambda \cdot   \partial^0 v$, for some $v \in  \Cs(X,\RR^{>0})$. 


\subsection{$CR$-automorphisms and canonical class} 
\label{sec:cr}
By definition, for any pseudo-Hermitian manifold  $(M,\{\om,J\})$,
the Levi form of the underlying $CR$-structure $$  B= d\om \left(J \cdot ,\cdot \right)$$  
is \emph{positive definite} on $D = \ker \om$.
For any $CR$-automorphism  $$ \alpha \in\Aut_{CR}(M,\{\sfD,J\}) \, , $$ the equation $\alpha_*({\sfD})={\sfD}$
implies that there exists $ u_\alpha \in \C^\infty(M, \RR^{>0})$ with  
\begin{equation} \label{eq:posi}
\alpha^*\om= u_\alpha\cdot\om \; . 
\end{equation}
(In fact, to show $u_\alpha>0$, note that, for any $0 \neq v \in D= \ker \om$, we have $$ B(\alpha_* v, \alpha_* v) = d\om (J \alpha_* v, \alpha_* v) = d \, \alpha^* \! \om \left(Jv, v \right) = u_\alpha B(v, v) \, > 0 .\;  ) $$ 

%

\subsubsection{Associated crossed homomorphism and canonical cohomology class}
Let $$ G  \, \leq \, \Aut_{CR}(M,\{\sfD,J\})$$  be a group of $CR$-automorphisms. Next choose a compatible pseudo-Hermitian structure   $\{\om,J\}$ for the $CR$-structure on $M$. Define a map
$$ \lam_{\mathrm{CR}}: G  \, \to \, \Cs(M,\RR^{>0})$$
by declaring   
\begin{equation}\label{crossedhomo_cr}
  \lam_{\mathrm{CR}}(\al)\, = \, \alpha_* u_{\al}\,  , \; \,  \al\in \Aut_{CR}(M) \, ,
\end{equation}
where $u_{\alpha}$ is as in \eqref{eq:posi} defined relative to $\om$. (More explicitly, \eqref{crossedhomo_cr}  amounts to 
$ \lam_{\mathrm{CR}}(\al)\, (x)= u_{\al}\, (\al^{-1}x) $, for all $x \in M$.)

\smallskip
We claim that  $\displaystyle  \lam_{\mathrm{CR}}$ is a crossed homomorphism for $G$. 

\begin{definition}
The map $ \lam_{\mathrm{CR}}$ is  called {crossed homomorphism  associated to $G$} and the pseudo-Hermitian structure $\{\om,J\}$. Its cohomology class $\mu_{\mathrm{CR}}$ is defined by the underlying $CR$-structure only and it is called the  $\emph{canonical class}$ associated to the $CR$-action of $G$.  
\end{definition} 

\medskip 
The geometric 
meaning of the canonical cohomology class is given by: 

\begin{pro}[Cohomology class of $CR$-transformation group]  \label{pro:can_class_CR}
Suppose that $M$ is a stric\-tly pseudo-convex $CR$-manifold and let $G$ be a Lie subgroup of the group of $CR$-automorphisms of $M$. Then: 
\begin{enumerate}
\item There exists in the differentiable cohomology of $G$ a natural associated class 
$$ \mu_{\mathrm{CR}} =  [\lambda_{\mathrm{CR}}]  \; \in \, H^1_d\left(G, \C^\infty(M,\RR^{>0})\right)$$ 
which is induced by the $CR$-structure on $M$. 
\item 
Moreover, the class $\mu_{\mathrm{CR}}$ vanishes if and only if there exists a contact form $\eta$ compatible with the $CR$-structure,  such that $G$ is contained in the group of pseudo-Hermitian transformations 
$\Psh(M,\{\eta,J\})$.	
\end{enumerate} 
\end{pro}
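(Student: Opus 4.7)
The proof plan splits naturally into constructing the class in (1) and verifying its geometric meaning in (2).

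For (1), I will first fix a compatible pseudo-Hermitian structure $\{\omega,J\}$ and verify that $\lambda_{\mathrm{CR}}:G\to\Cs(M,\RR^{>0})$ defined by $\lambda_{\mathrm{CR}}(\alpha)(x)=u_\alpha(\alpha^{-1}x)$ is a differentiable crossed homomorphism. Smoothness is clear since the assignment $\alpha\mapsto u_\alpha$ is smooth by smoothness of the action and of $\omega$. For the cocycle identity, pulling back $\omega$ by $\alpha\beta$ in two ways gives the chain rule $u_{\alpha\beta}=(u_\alpha\circ\beta)\cdot u_\beta$, and a direct substitution $x\mapsto\beta^{-1}\alpha^{-1}x$ yields
\[ \lambda_{\mathrm{CR}}(\alpha\beta)(x)=u_\alpha(\alpha^{-1}x)\cdot u_\beta(\beta^{-1}\alpha^{-1}x)=\lambda_{\mathrm{CR}}(\alpha)(x)\cdot\bigl(\alpha_*\lambda_{\mathrm{CR}}(\beta)\bigr)(x), \]
which is precisely the crossed-homomorphism relation \eqref{eq:crossed_homc} (the coefficient group being abelian).

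Next I will show that the class $\mu_{\mathrm{CR}}=[\lambda_{\mathrm{CR}}]$ depends only on the $CR$-structure and not on the choice of contact representative $\omega$. If $\omega'=h\cdot\omega$ with $h\in\Cs(M,\RR^{>0})$, then $\alpha^*\omega'=(h\circ\alpha)\,u_\alpha\,h^{-1}\omega'$, so $u'_\alpha=(h\circ\alpha)u_\alpha/h$. Substituting and using the definition gives $\lambda'_{\mathrm{CR}}(\alpha)(x)=\bigl(h(x)/h(\alpha^{-1}x)\bigr)\cdot\lambda_{\mathrm{CR}}(\alpha)(x)$, which is $\lambda_{\mathrm{CR}}$ multiplied by the coboundary $\partial^0(h^{-1})$ in the sense of \eqref{eq:coboundary}. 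Hence the two cocycles represent the same class in $H^1_d(G,\Cs(M,\RR^{>0}))$, and the construction is canonical.

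For (2), one direction is immediate: if $G\leq\Psh(M,\{\eta,J\})$, then using $\omega=\eta$ as the chosen representative forces $u_\alpha\equiv 1$ for all $\alpha\in G$, hence $\lambda_{\mathrm{CR}}$ is trivial. Conversely, if $\mu_{\mathrm{CR}}=0$, choose any representative $\omega$ and pick $v\in\Cs(M,\RR^{>0})$ with $\lambda_{\mathrm{CR}}=\partial^0 v$, that is $u_\alpha(\alpha^{-1}x)=v(\alpha^{-1}x)/v(x)$. Rewriting with $y=\alpha^{-1}x$ yields $v(\alpha y)u_\alpha(y)=v(y)$. I then set $\eta=v\cdot\omega$, which is a contact form compatible with $\sfD$ since $v>0$ and $\ker\eta=\ker\omega=\sfD$, and a direct calculation gives
\[ \alpha^*\eta=(v\circ\alpha)\cdot u_\alpha\cdot\omega=v\cdot\omega=\eta \]
for every $\alpha\in G$, so $G\leq\Psh(M,\{\eta,J\})$. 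The only mild technical point worth watching is ensuring $v$ can be taken smooth and strictly positive; this is built into the module $\Cs(M,\RR^{>0})$ via the $\exp$-isomorphism used in Theorem \ref{thm:vanish}, so no separate argument is needed and the proof closes without a substantial obstacle.
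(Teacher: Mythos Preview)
Your proposal is correct and follows essentially the same route as the paper: you verify the cocycle identity via the chain rule $u_{\alpha\beta}=(u_\alpha\circ\beta)\,u_\beta$, check independence of the contact representative by exhibiting the difference as a coboundary $\partial^0(h^{-1})$, and for (2) set $\eta=v\cdot\omega$ with $v$ a trivializing $0$-cochain and compute $\alpha^*\eta=\eta$. The only cosmetic differences are that you spell out the easy direction of (2) and the smoothness of $\lambda_{\mathrm{CR}}$, and that you work in pointwise notation where the paper uses the pushforward shorthand $\alpha_*$.
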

\begin{proof} 
To show that $\lambda_{CR}$ is a crossed homomorphism,  we calculate  
\begin{equation*}\begin{split}
(\al \cdot  \be)^*\, \om&= \be^* (\al^*\om)=\be^*\, (u_\al\, \om)= (\be^*u_\al) \cdot  u_\be \;   \om.
\end{split}
\end{equation*}
 As, according to \eqref{eq:posi}, $(\al \cdot \be)^*\om= u_{\al\be}\,\om$,  for some $u_{\al\be}\in \C(X,\RR^{>0})$,
we have $
u_{\al\be}\, = \be^* u_\alpha \cdot u_\be$. 
In particular, 
\begin{equation*}\begin{split}
 \lam_{\mathrm{CR}}(\al\be) &= (\al \cdot \be)_* \, u_{\al\be} =
\al_* u_\al \cdot (\al \cdot \be)_* \, u_\be 
=  \lam_{\mathrm{CR}}(\al) \cdot \al_*  \lam_{\mathrm{CR}}(\be) \, . 
\end{split}
\end{equation*}
So $\lam_{\mathrm{CR}}$ is a crossed homomorphism as in \eqref{eq:crossed_homc}. 

Suppose next that $\omega = v \, \omega'$, for some $v \in  \C^\infty(M,\RR^{>0})$. From 
\begin{equation*} 
u_{\alpha} \,  \om  =  \al^{*} \om  =  \al^{*} (v  \, \omega') =    \al^{*} v \cdot u'_{\alpha} \,  \om' =   \al^{*} v \cdot u'_{\alpha} \cdot  v^{-1}  \, \om
\end{equation*}
we deduce $u_{\alpha} =  \al^{*} v \cdot u'_{\alpha} \cdot  v^{-1}$. Thus $\lambda_{\mathrm{CR}}(\al) = 
\alpha_{*} u_{\alpha} = v \cdot  \alpha_{*} v^{-1} \cdot \alpha_{*} u'_{\alpha} = (\partial^{0} v^{-1}) (\al) \cdot  \lambda'_{\mathrm{CR}}(\al) $, showing that the associated crossed homomorphisms $\lambda_{\mathrm{CR}}$ and $\lambda'_{\mathrm{CR}}$ are representing the same cohomology class $\mu_{\mathrm{CR}}$. 
\smallskip 

%

\smallskip
Next let $\{\om,J\}$ be a pseudo-Hermitian structure on $M$ compatible with the $CR$-structure and $\lambda_{\mathrm{CR}}$ be the associated crossed homomorphism defined by $\om$. Suppose that $[ \lambda_{\mathrm{CR}} ] = 0$. Therefore, as in \eqref{eq:coboundary}, there exists $v\in \Cs(M,\RR^{>0})$
such that $ \displaystyle \alpha_* v =\lam(\al) \cdot v \, , \text{ for all } \alpha \in G$, which amounts to 
\begin{equation}\label{eq:cobound_cr}
\begin{split}
v &= \alpha^* \, (\lam(\al) \cdot v ) = u_\al \cdot  \al^*v \, .
\end{split}
\end{equation}
Put a $1$-form $
\eta=v\cdot \om $. 
Then $$ \al^*\eta=\al^*v\cdot \al^*\om=\al^*v \cdot u_\al \, \om=v\cdot \om=\eta \, .$$ 
Hence, $ \alpha \, \in  \,  \Psh(M,\{\eta,J\})$ is equivalent to \eqref{eq:cobound_cr}. This  shows (2). 
\end{proof}

\subsubsection{Proof of Theorem \ref{thm:proper_cr}}
Since $G$ acts properly, Theorem \ref{thm:vanish} shows that the canonical crossed homomorphism $\lambda_{\mathrm{CR}}$ for the group of $CR$-\-auto\-mor\-phisms $G$,  as defined in \eqref{crossedhomo_cr},  is exact.
In the view of (2) of Proposition  \ref{pro:can_class_CR}, this  proves Theorem \ref{thm:proper_cr}. 
\hfill \qed

\subsection{Conformal case} \label{sec:conf}
Replacing the role of  $\om$ in $CR$-geometry by a Riemannian metric $g$ on $M$, the  conformal
class of $g$ is said to establish  a \emph{conformal structure} on $M$. Every 
diffeomorphism $\alpha : M\ra M$ that satisfies $\alpha^*g= u_\alpha \cdot g$ for some positive smooth function $u_\alpha \in \C^\infty(M,\RR^{>0})$ is correspondingly called a \emph{conformal automorphism}
of $(M,g)$.  

\smallskip 
As in the $CR$ case, Proposition \ref{pro:can_class_CR}, to any Lie group $G$ of conformal automorphisms 
there is a natural associated cohomology class  $ \mu_{\mathrm{Conf}}$, which is an obstruction for $G$ being a group of isometries: 

\begin{pro}[Cohomology class of conformal transformation group]  \label{pro:can_class_Conf}
Let $(M,g)$ be  a Riemannian manifold and let $G$ be a Lie subgroup of the group of 
conformal automorphisms of $M$. Then: 
\begin{enumerate}
\item There exists in the differentiable cohomology of $G$ a natural associated class 
$$ \mu_{\mathrm{Conf}} =  [\lambda_{\mathrm{Conf}}]  \; \in \, H^1_d\left(G, \C^\infty(M,\RR^{>0})\right)$$ 
which is induced by the conformal structure on $M$. 
\item 
Moreover, the class $\mu_{\mathrm{Conf}}$ vanishes if and only if there exists a Riemannian metric  $h$ conformal to $g$, such that $ G$ is contained in the group of isometries $ \Iso(M, h) $.
\end{enumerate} 
\end{pro}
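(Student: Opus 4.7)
The plan is to run the same argument as for Proposition \ref{pro:can_class_CR} with the Riemannian metric $g$ playing the role of the contact form $\om$. First, for every $\alpha \in G$ the equation $\alpha^*g = u_\alpha \cdot g$ defines a positive smooth function $u_\alpha \in \C^\infty(M,\RR^{>0})$; positivity is immediate here because $g$ is positive definite, so no separate Levi-form argument is needed. Set
\[
\lambda_{\mathrm{Conf}}(\alpha) \, = \, \alpha_* u_\alpha \; ,
\]
and compute $(\alpha\beta)^*g = \beta^*(u_\alpha\, g) = (\beta^* u_\alpha)\, u_\beta\, g$, whence $u_{\alpha\beta} = \beta^*u_\alpha \cdot u_\beta$. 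Pushing this forward by $\alpha\beta$ gives the cocycle identity \eqref{eq:crossed_homc} for $\lambda_{\mathrm{Conf}}$, so $\lambda_{\mathrm{Conf}} \in Z^1_d(G,\C^\infty(M,\RR^{>0}))$. Smoothness in $\alpha$ follows from smoothness of the $G$-action on $M$.

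Next I would show that the cohomology class $\mu_{\mathrm{Conf}} = [\lambda_{\mathrm{Conf}}]$ depends only on the conformal class of $g$. If $g = v \cdot g'$ with $v \in \C^\infty(M,\RR^{>0})$, then expanding $\alpha^*g$ in two ways yields $u_\alpha = \alpha^* v \cdot u'_\alpha \cdot v^{-1}$, which after applying $\alpha_*$ translates into $\lambda_{\mathrm{Conf}} = (\partial^0 v^{-1}) \cdot \lambda'_{\mathrm{Conf}}$. Hence the two cocycles differ by an exact cocycle, so they represent the same class in $H^1_d(G,\C^\infty(M,\RR^{>0}))$, proving (1).

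For part (2), if $G \leq \Iso(M,h)$ for some $h = v \cdot g$ conformal to $g$, then $\alpha^*h = h$ translates exactly into $u_\alpha \cdot \alpha^* v = v$, i.e. into $\partial^0 v = \lambda_{\mathrm{Conf}}$ after rearrangement, so $\mu_{\mathrm{Conf}} = 0$. Conversely, if $\mu_{\mathrm{Conf}} = 0$ there exists $v \in \C^\infty(M,\RR^{>0})$ with $\alpha_* v = \lambda_{\mathrm{Conf}}(\alpha) \cdot v$ for all $\alpha \in G$; pulling this back along $\alpha$ gives $v = u_\alpha \cdot \alpha^* v$, and then the metric $h = v \cdot g$ satisfies $\alpha^* h = (\alpha^* v)\cdot u_\alpha \cdot g = v \cdot g = h$, showing $G \leq \Iso(M,h)$.

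The computations are entirely parallel to those of Proposition \ref{pro:can_class_CR}; the only point of genuine friction is bookkeeping of pushforward versus pullback when passing between $u_\alpha$ and $\lambda_{\mathrm{Conf}}(\alpha) = \alpha_* u_\alpha$, and making sure the conformal change $g \leadsto v \cdot g'$ produces an honest coboundary $\partial^0 v^{\pm 1}$ in the sense of \eqref{eq:coboundary} rather than just a formal equivalence. Once that is handled, both (1) and (2) follow by direct substitution.
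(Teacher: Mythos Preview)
Your proposal is correct and follows essentially the same approach as the paper: define $\lambda_{\mathrm{Conf}}(\alpha)=\alpha_* u_\alpha$, verify the cocycle identity and conformal-class independence exactly as in Proposition~\ref{pro:can_class_CR}, then show that $\partial^0 v=\lambda_{\mathrm{Conf}}$ is equivalent to $h=v\cdot g$ being $G$-invariant. In fact you spell out more detail than the paper, which largely just refers back to the $CR$ case; your bookkeeping of $\alpha_*$ versus $\alpha^*$ is consistent with the paper's conventions.
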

\begin{proof} We define the cocycle 
$ \lam_{\mathrm{conf}}: G  \, \to \, \Cs(M,\RR^{>0})$
by declaring   
\begin{equation}\label{crossedhomo_conf}
  \lam_{\mathrm{conf}}(\al)\, = \, \alpha_* u_{\al}\,  , \; \,  \al\in \Aut_{CR}(M) \, ,
\end{equation}
where $u_\alpha \in \C^\infty(M,\RR^{>0})$  is defined as above by the relation 
$\alpha^*g= u_\alpha \cdot g$. 
As in the proof of  Proposition \ref{pro:can_class_CR} it can be verified that $\displaystyle  \lam_{\mathrm{conf}}$ is a crossed homomorphism for $G$, and its class $ \mu_{\mathrm{Conf}}$ in  $H^1_d\left(G, \C^\infty(M,\RR^{>0})\right)$ depends only on  the conformal class of $g$. This shows (1).

The  condition  that $[\lam_{\mathrm{Conf}}]  \in H^1_d \left(G , \Cs(M,\RR^{>0})\right)$ vanishes means that there  exists  $v\in \Cs(M,\RR^{>0})$ with $\partial^{0} v = \lambda_{\mathrm{Conf}}$, that is, by \eqref{eq:coboundary}, \begin{equation} \label{eq:cobound_conf}
(\al_{*}  v)  \cdot  v^{-1}  =\lam_{\mathrm{Conf}}(\al) = \alpha_{*} u_{\alpha} \; \text{, for all $\alpha \in G$} 
.
\end{equation}
On the other hand, putting $h = v \cdot g$, we have 
$\alpha^{*} h = \alpha^{*} v \cdot  \alpha^{*} g =   \alpha^{*} v \cdot u_{\alpha} g$. 
Therefore $\alpha^{*} h = h$ is equivalent to \eqref{eq:cobound_conf}, 
which is equivalent $(\partial^{0} v) (\alpha) = \lambda_{\mathrm{Conf}}
(\alpha)$. This implies (2). 
\end{proof}

We also obtain the following analogue of Theorem \ref{thm:proper_cr}: 

\begin{theorem} \label{thm:proper_conformal}
Let $(M,g)$ be a Riemannian manifold. 
Suppose that  
$ G \, \leq  \, \Conf(M,g)$
is a subgroup of conformal automorphisms that acts properly on $M$. Then there exists on $M$ a Riemannian metric $h$ conformal to $g$, such that $ G \, \leq \, \Iso(M, h) \, . $
\end{theorem}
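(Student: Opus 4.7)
The plan is to follow verbatim the strategy used for Theorem \ref{thm:proper_cr} in the $CR$-case, exploiting the fact that all the heavy machinery---the vanishing of equivariant differentiable cohomology for proper actions and the interpretation of the canonical class as an obstruction---has already been established in the proposition-form we need.

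First I would invoke Proposition \ref{pro:can_class_Conf}(1) to produce the canonical cohomology class
\[ \mu_{\mathrm{Conf}} \, = \, [\lambda_{\mathrm{Conf}}] \; \in \; H^1_d\left(G, \Cs(M,\RR^{>0})\right) \]
associated to the conformal $G$-action. The hypothesis that $G \leq \Conf(M,g)$ acts properly on $M$ then allows an appeal to Theorem \ref{thm:vanish} with $X = M$, which asserts that, for the differentiable $G$-module $\Cs(M,\RR^{>0})$,
\[ H^r_d\left(G, \Cs(M, \RR^{>0})\right) \, = \, \zsp \qquad \text{for all } r \geq 1. \]
In particular the class $\mu_{\mathrm{Conf}}$ must vanish.

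Having $\mu_{\mathrm{Conf}} = 0$, I apply Proposition \ref{pro:can_class_Conf}(2), which states exactly that vanishing of the canonical class is equivalent to the existence of a Riemannian metric $h$ conformal to $g$ such that $G \leq \Iso(M,h)$. This immediately yields the desired conclusion.

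There is no substantive obstacle here; the entire content has been packaged into (i) the construction of the canonical class for conformal actions, (ii) its obstruction-theoretic interpretation, and (iii) the vanishing theorem for differentiable cohomology of properly acting Lie groups on function spaces. The proof is a three-line invocation of these prior results, precisely parallel to the proof of Theorem \ref{thm:proper_cr}. If anything requires care, it is merely verifying once more that $\Cs(M,\RR^{>0})$ is indeed a differentiable $G$-module so that Theorem \ref{thm:vanish} applies---but this is already noted in Section \ref{sec:crossed_hom} via the exponential isomorphism $\Cs(M,\RR) \cong \Cs(M,\RR^{>0})$ of differentiable $G$-modules.
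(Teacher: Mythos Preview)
Your proposal is correct and matches the paper's own proof essentially verbatim: invoke Theorem \ref{thm:vanish} for the proper action to kill $\mu_{\mathrm{Conf}}$, then apply Proposition \ref{pro:can_class_Conf}(2) to produce the conformal metric $h$ with $G \leq \Iso(M,h)$. The paper states it in two sentences, but the logical content is identical to what you wrote.
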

\begin{proof} Since $G$ acts properly,  Theorem \ref{thm:vanish} implies that the associated differentiable cohomology class $ \mu_{\mathrm{Conf}}$ vanishes. Hence,  there exists on $M$ a Riemannian metric $h = v \cdot g $, such that $ G \, \leq \, \Iso(M, h) \, . $
\end{proof}

\subsection{Cohomological characterization of proper actions}  \label{sec:proper_and_coho}

The following theorem shows that the properness of $CR$- and conformal actions is basically a vanishing 
property for differentiable cohomology.  This  also 
implies Corollary \ref{cor:main_cr_and_h1} in the introduction:   

\begin{theorem} \label{thm:proper_and_coho}
Let $G$ be a Lie group of diffeomorphisms of the smooth manifold $M$ that preserves either a strictly pseudo-convex $CR$-structure or a conformal Riemannian structure on $M$.  If $G$ is closed in the group of all diffeomorphisms of $M$, then the following are equivalent: 
\begin{enumerate}
\item $G$ acts properly on $M$. 
\item $H^1_{d}\left(G, \C^{\infty}(M, \RR)\right) = \zsp$.
\item $H^{r}_{d}\left(G, \C^{\infty}(M, V)\right) = \zsp$, for all $r>0$, and any differentiable $G$-module $V$.
\end{enumerate}
 \end{theorem}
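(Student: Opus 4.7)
The plan is to prove the three conditions equivalent in a cycle $(1)\Rightarrow(3)\Rightarrow(2)\Rightarrow(1)$. The implication $(1)\Rightarrow(3)$ is precisely Theorem \ref{thm:smooth_vanish}: for any smooth proper $G$-action on $M$ and any differentiable $G$-module $V$, all higher differentiable cohomology groups $H^{r}_{d}(G,\Cs(M,V))$ vanish. Specializing to $V=\RR$ (with trivial $G$-action) and $r=1$ gives $(3)\Rightarrow(2)$ at once.

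The main content lies in $(2)\Rightarrow(1)$. First, the exponential map $\exp\colon\Cs(M,\RR)\to\Cs(M,\RR^{>0})$ is a $G$-equivariant isomorphism of differentiable $G$-modules, as exploited in the proof of Theorem \ref{thm:vanish}. So hypothesis (2) is equivalent to
$$ H^{1}_{d}\bigl(G,\Cs(M,\RR^{>0})\bigr)=\zsp. $$
In particular, the canonical class $\mu_{\mathrm{CR}}\in H^{1}_{d}\bigl(G,\Cs(M,\RR^{>0})\bigr)$ (in the strictly pseudo-convex $CR$ case) or $\mu_{\mathrm{Conf}}$ (in the conformal Riemannian case) must vanish. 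Applying part (2) of Proposition \ref{pro:can_class_CR} (respectively Proposition \ref{pro:can_class_Conf}) converts this vanishing into the existence of a compatible contact form $\eta$ with $D=\ker\eta$ (respectively a Riemannian metric $h$ conformal to $g$) such that
$$ G\subseteq\Psh(M,\{\eta,J\}) \qquad \text{or} \qquad G\subseteq\Iso(M,h). $$

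To conclude properness we exploit the ambient geometric group $H$ containing $G$. In the $CR$ case $H=\Psh(M,\{\eta,J\})$ preserves the contact Riemannian metric $\eta\cdot\eta+d\eta\circ J$ and thus embeds in its isometry group, which acts properly by Myers--Steenrod; in the conformal case $H=\Iso(M,h)$ acts properly by the same theorem. Since $G$ is closed in $\Diff(M)$ and $G=G\cap H$, the subgroup $G$ is also closed in $H$; restricting the proper map $H\times M\to M\times M$, $(h,x)\mapsto(h\cdot x,x)$, to the closed subspace $G\times M$ yields a proper map, so $G$ itself acts properly on $M$. The crux of the argument, and the only genuinely non-formal step, is the conversion of the cohomological vanishing into an $H$-invariant reference metric via Propositions \ref{pro:can_class_CR} and \ref{pro:can_class_Conf}; everything else is the vanishing theorem on one side and a routine descent of properness to closed subgroups on the other.
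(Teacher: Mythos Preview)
Your proof is correct and follows essentially the same route as the paper: the cycle $(1)\Rightarrow(3)\Rightarrow(2)\Rightarrow(1)$ via Theorem \ref{thm:smooth_vanish}, specialization, and then vanishing of the canonical class combined with Propositions \ref{pro:can_class_CR}/\ref{pro:can_class_Conf} and Myers--Steenrod. You spell out a couple of points the paper leaves implicit (the $\exp$ identification of coefficients and why closedness in $\Diff(M)$ yields closedness in the isometry group), but the argument is the same.
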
 
 
\begin{proof} Suppose that $G$ acts properly, then by Theorem \ref{thm:smooth_vanish}, (3) is satisfied. 
Now (3) clearly implies (2).  Finally,  if (2) is satisfied, the canonical class $\mu \in H^{1}_{d}\left(G , \Cs(M,\RR^{>0})\right)$ associated to either the $CR$- or conformal structure on $M$ vanishes. In the first case, as is implied by  Proposition \ref{pro:can_class_CR}, $G$ preserves an associated contact Riemannian metric on $M$,  respectively in the case of Proposition \ref{pro:can_class_Conf}, the group $G$ preserves a Riemannian metric in the given conformal class. Since the group of isometries of a Riemannian manifold acts properly by the theorem of Myers and Steenrod \cite{MS}, and $G$ is a closed group of isometries,  $G$ acts properly on $M$.  
\end{proof}

\subsection{Locally conformal K\"ahler metrics} \label{sec:lcK}
%
%
In this subsection we let  $(X,J)$ denote a connected complex manifold satisfying 
${\rm dim}_\RR\, X=2n\geq 4$. 
Then a Hermitian metric $h$  for  $X$ is called a  \emph{locally conformal K\"ahler} metric if it is \emph{locally} conformal to a K\"ahler metric. 
(This means that 
 there exists an open covering $\{  U_\ell \}$  of $X$ and functions $u_\ell:  U_\ell \ra\RR^{>0}$
such that $h=u_\ell \cdot g_{\ell}$, where  $g_{\ell}$ is a K\"ahler metric on $(U_\ell, J)$, 
compare  \cite{Va}, \cite{DO}.)  
%

\subsubsection{Conformal automorphisms of K\"ahler metrics}
Our first result concerns $lcK$-metrics which admit a K\"ahler metric in 
their conformal class: 

%

%
%
%
%

\begin{theorem}\label{thm:hollcK}
Let $(X,\{J,g\})$ be a K\"ahler manifold which is not holomorphically  isometric to $\CC^n$.
Then the following hold:  \smallskip

{\bf (1)}\,  There exists an $lcK$ manifold  $(X, \{h,J\})$ such that  the $lcK$ metric $h$  
is conformal to the K\"ahler metric $g$ and satisfying 
$$  \Iso\left(X,\{h,J \}\right) \, =  \, \mathrm{Conf}(X,\{g,J\})  
\, . 
$$ 


{\bf (2)}\, Furthermore, the  holomorphic isometry group  $\Iso\left(X,\{h,J \}\right)$  
is maximal among all isometry groups of Hermitian metrics 
conformal to $g$.
\end{theorem}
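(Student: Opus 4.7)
My plan is to combine the holomorphic rigidity statement of Theorem \ref{thm:schoenhol} with the cohomological vanishing principle of Theorem \ref{thm:vanish}, exactly as in the proof of Theorem \ref{thm:proper_conformal}, and then observe that the resulting conformal deformation of the K\"ahler metric is automatically Hermitian and (globally) conformally K\"ahler. The key point is that the canonical cohomology class of Proposition \ref{pro:can_class_Conf} is defined for any Lie subgroup of conformal automorphisms and depends only on the conformal class of $g$; applying it to the \emph{holomorphic} conformal group yields a metric which is automatically $J$-Hermitian.

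First I would set $G := \mathrm{Conf}(X,\{g,J\})$. Since $X$ is not holomorphically isometric to $\CC^n$, Theorem \ref{thm:schoenhol} asserts that $G$ acts properly on $X$. Noting that $G$ is a closed subgroup of $\mathrm{Conf}(X,g)$ (in the underlying Riemannian conformal group), Theorem \ref{thm:proper_conformal} applied to $G$ produces a smooth positive function $v \in \Cs(X,\RR^{>0})$ such that the Riemannian metric $h := v \cdot g$ satisfies $G \leq \Iso(X,h)$. Because the elements of $G$ preserve $J$ by definition, they lie in $\Iso(X,\{h,J\})$, and the reverse inclusion $\Iso(X,\{h,J\}) \leq G$ is immediate from $h = v g$ (so any $J$-holomorphic isometry of $h$ is a $J$-holomorphic conformal transformation of $g$). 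Finally, $h = v \cdot g$ is $J$-Hermitian (as a positive rescaling of a Hermitian metric) and globally conformal to the K\"ahler metric $g$, hence in particular is an $lcK$ metric. This proves (1).

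For (2), let $h'$ be any Hermitian metric in the conformal class of $g$, say $h' = v' \cdot g$ with $v' \in \Cs(X,\RR^{>0})$. If $\alpha \in \Iso(X,\{h',J\})$, then $\alpha^{*}(v' g) = v' g$ forces $\alpha^{*} g = (v' / \alpha^{*} v') \cdot g$, so $\alpha$ is a holomorphic conformal automorphism of $g$. Hence $\Iso(X,\{h',J\}) \leq G = \Iso(X,\{h,J\})$, giving the claimed maximality.

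The only conceptual point one must take care of is that Theorem \ref{thm:proper_conformal} is formulated for the underlying Riemannian conformal structure, whereas here we want to stay within the holomorphic category. This, however, poses no real obstacle: the canonical class $\mu_{\mathrm{Conf}}$ of Proposition \ref{pro:can_class_Conf} depends only on the action of $G$ on the conformal class of $g$, and the coboundary $v$ trivializing it produces a $J$-invariant rescaling because every $\alpha \in G$ commutes with $J$. The main substantive input is thus the holomorphic rigidity of Theorem \ref{thm:schoenhol}, which is what forces the dichotomy with $\CC^n$; the rest is a direct application of the vanishing theorem for $H^{1}_{d}(G,\Cs(X,\RR^{>0}))$ of Theorem \ref{thm:vanish}.
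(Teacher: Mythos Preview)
Your proof is correct and follows essentially the same route as the paper: invoke Theorem \ref{thm:schoenhol} to get properness of $G=\mathrm{Conf}(X,\{g,J\})$, apply Theorem \ref{thm:proper_conformal} to obtain $h=v\cdot g$ with $G\leq \Iso(X,h)$, observe that $h$ is Hermitian so $G\leq \Iso(X,\{h,J\})$, and get the reverse inclusion from $\Iso(X,\{h,J\})\leq \mathrm{Conf}(X,\{h,J\})=\mathrm{Conf}(X,\{g,J\})$; part (2) is the same chain of inclusions. One small expository slip: in your final paragraph, the reason $h$ is $J$-Hermitian is simply that it is a positive scalar multiple of the Hermitian metric $g$ (as you correctly say earlier), not that elements of $G$ commute with $J$.
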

\begin{proof} 
 
 Since $X$ is not holomorphically isometric to $\CC^{n}$, we infer from Theorem \ref{thm:schoenhol} that the Lie group $ \mathrm{Conf}(X,\{g,J\})$ is acting properly on $X$.  
Therefore, according to Theorem \ref{thm:proper_conformal}, 
there exists a
Riemannian metric $h$ conformal to $g$, such that $  \mathrm{Conf}(X,\{g,J\})  \leq  \Iso(X, h)$.  
Since $h$ is conformal to the K\"ahler metric $g$, $h$ is Hermitian for the complex structure $J$  
and   $ \mathrm{Conf}(X,\{g,J\}) \leq 
 \Iso\left(X,\{h,J \}\right)$.  From the fact that $\mathrm{Conf}(X,\{g,J\}) = \mathrm{Conf}(X,\{h,J\})$,  
 we deduce that in fact  $\mathrm{Conf}(X,\{g,J\}) = 
 \Iso\left(X,\{h,J \}\right)$. This proves (1). 
 
For the proof of (2), note that 
$\Isom \left(X,\{ h',J \}\right) \leq  \, \mathrm{Conf}(X,\{h',J\})  =  \mathrm{Conf}(X,\{g,J\}) =  \Isom\left(X,\{ h,J \}\right)$, by (1). 
This shows (2). 
\end{proof}

\smallskip 
\paragraph{\em Remark} 
For a  K\"ahler manifold  $(X, \{g,J\})$ with
${\rm dim}_\RR\, X=2n\geq 4$,
the group of holomorphic conformal 
diffeomorphisms $$ {\Conf}(X,\{g,J\})$$  coincides with 
the group of holomorphic homothetic transformations 
\begin{equation*} \begin{aligned} 
{\rm Hoth} &  (X,\{g,J\}) = \\  &  \{ \alpha \in \Diff(X)\mid \alpha^* g= c\cdot g,\, \alpha_*J=J \alpha_* \text{,  for some }  c \in \RR^{>0} \} \; . 
\end{aligned}
\end{equation*} 
In fact, any biholomorphic conformal map between K\"ahler manifolds is easily seen to be an isometry up to constant scaling of the metrics, compare  \cite[Theorem 6.5, p. 66]{YANO}. Therefore, (1) of Theorem \ref{thm:hollcK} also asserts the equality 
\begin{equation} \tag{{\bf 1'}} 
  \Iso\left(X,\{h,J \}\right) \, =  \, \mathrm{Hoth} (X,\{h,J\})  
 \, = \,   \Hoth\left(X,\{g,J\}\right) \; . 
 \end{equation}

\subsubsection{Conformal automorphisms of $lcK$-metrics}

We are now looking at the conformal class of $lcK$-metrics and their conformal automorphisms in general. 
First we recall that $lcK$-manifolds admit K\"ahler coverings (compare \cite[p. 65]{Va}): 

\begin{lemma}[K\"ahler covering] Let $X$ be a simply connected complex manifold  and $h$ any $lcK$-metric on $X$. 
Then 
\begin{enumerate}
\item The  $lcK$-metric $h$ is conformal  to a K\"ahler metric $g$  (which is unique up to a constant factor).
\item The  $lcK$-metric $h$ is locally holomorphically conformal to $\CC^{n}$ if and only if 
$g$ is a flat K\"ahler metric (that is, $g$ is locally holomorphically isometric to $\CC^{n}$). 
\item  The  $lcK$-manifold $(X,\{h,J\})$ is holomorphically conformal to $\CC^{n}$ if and only if the K\"ahler metric $(X,\{g,J\})$ is holomorphically isometric to $\CC^{n}$. 
\end{enumerate}
\end{lemma}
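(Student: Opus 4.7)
The plan is to use the standard Lee form construction in the $lcK$ setting. Given the local decompositions $h = u_{\ell}\cdot g_{\ell}$ on $U_{\ell}$ with each $g_{\ell}$ K\"ahler, I would first compute the fundamental $(1,1)$-form of $h$, obtaining $\omega_{h} = u_{\ell}\,\omega_{g_{\ell}}$ on $U_{\ell}$, and consequently
$$ d\omega_{h} \, = \, d\log u_{\ell}\wedge \omega_{h}\, . $$
This identifies a local candidate $\theta|_{U_{\ell}} := d\log u_{\ell}$ for a globally defined closed Lee $1$-form $\theta$ on $X$. To see $\theta$ is well-defined on overlaps, note that $u_{\ell}\,g_{\ell} = u_{m}\,g_{m}$ yields $g_{m} = c\cdot g_{\ell}$ with $c := u_{\ell}/u_{m}$. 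K\"ahler-ness of both $g_{\ell}$ and $g_{m} = c\,g_{\ell}$ forces $dc\wedge \omega_{g_{\ell}} = 0$; since $\dim_{\RR}X = 2n\geq 4$, the elementary symplectic-linear-algebra fact that $\alpha\mapsto \alpha\wedge \omega_{g_{\ell}}$ is injective on $1$-forms implies $c$ is locally constant, and hence $d\log u_{\ell} = d\log u_{m}$ on $U_{\ell}\cap U_{m}$.

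For (1), simple-connectedness of $X$ gives a smooth primitive $\varphi: X\to\RR$ with $d\varphi = \theta$, and then on each $U_{\ell}$ the function $\varphi - \log u_{\ell}$ is a local constant $c_{\ell}$. Therefore $g := e^{-\varphi}\cdot h$ satisfies $g|_{U_{\ell}} = e^{-c_{\ell}}\,g_{\ell}$, a positive constant multiple of a K\"ahler metric, so $g$ is a globally defined K\"ahler metric conformal to $h$. Uniqueness up to a positive constant follows by the same rigidity applied globally: if $g' = f\cdot g$ is another K\"ahler representative of the conformal class, then the K\"ahler condition on both sides forces $df\wedge\omega_{g} = 0$, hence $df = 0$ on the connected manifold $X$, so $f$ is constant.

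For (2), the ``if'' direction is immediate, since if $g$ is locally holomorphically isometric to $(\CC^{n},g_{0})$, then $h = e^{\varphi}g$ is locally holomorphically conformal to $\CC^{n}$. Conversely, a local biholomorphism on some $U$ pulling back $g_{0}$ to a metric conformal to $h|_{U}$ produces a second local K\"ahler representative of $h|_{U}$; the local version of the uniqueness in (1) forces this pullback to be a constant multiple of $g|_{U}$, so $g$ is locally flat K\"ahler. Part (3) is analogous: the easy direction is automatic from $h = e^{\varphi}g$. Conversely, a global biholomorphism $\phi: X\to \CC^{n}$ with $\phi^{*}g_{0}$ conformal to $h$ provides a second globally defined K\"ahler representative of the conformal class; the global uniqueness from (1) gives $\phi^{*}g_{0} = c\cdot g$ for a positive constant $c$, and after rescaling coordinates on $\CC^{n}$ this exhibits $(X,g)$ as holomorphically isometric to $(\CC^{n},g_{0})$.

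The main obstacle is the conformal rigidity of K\"ahler metrics: two K\"ahler metrics in the same conformal class must differ by a locally constant factor. This step crucially uses the dimension hypothesis $\dim_{\RR}X\geq 4$ through the injectivity of wedging with the K\"ahler form on $1$-forms; once it is established, the remainder of all three parts is a straightforward application of the Poincar\'e lemma in degree one together with the definitions.
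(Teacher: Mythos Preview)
Your proposal is correct and follows essentially the same route as the paper: the Lee form $\theta = d\log u_{\ell}$ is globalized and integrated on the simply connected $X$ to produce the K\"ahler representative $g = e^{-\varphi}h$, and parts (2)--(3) are reduced to the conformal rigidity of K\"ahler metrics in dimension $\geq 4$. The only difference is that you supply a direct proof of this rigidity (via injectivity of $\alpha\mapsto\alpha\wedge\omega_{g}$ on $1$-forms), whereas the paper invokes the equivalent statement that a holomorphic conformal map between K\"ahler manifolds is a homothety, citing \cite[Theorem 6.5]{YANO}.
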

\begin{proof} Let $\Theta = h(J \cdot, \cdot)$ be the fundamental two-form of the $lcK$-metric $h$.
There exists a closed (global) one-form $\theta$ on $X$, called Lee form,  such that
$$ \displaystyle d\Theta=\theta\we \Theta \,  . $$
Indeed,  by the definition of $lcK$ metric, $\theta$ is constructed as  $\theta = d \log u_{\ell}$ on $U_{\ell}$, where $h=u_\ell \cdot g_{\ell}$ and  $g_{\ell}$ is a K\"ahler metric on $U_\ell$, for
some covering of $X$. 
Furthermore, assuming that  $X$ is simply connected,
$\theta=df$ for some function $f$ on $X$. 
Then $ \Omega=e^{-f}\cdot \Theta$ is a K\"ahler form on $X$. This proves (1). 

The $lcK$-metric $h$ being locally holomorphically conformal to $\CC^{n}$ means that there exists 
locally a holomorphic conformal map to $\CC^{n}$. 
Since the K\"ahler metric $g$ is conformal to $h$, this map is also locally conformal for $g$, so $g$ is
locally holomorphically conformal to the standard flat K\"ahler space $\CC^{n}$. By the above remark following Theorem \ref{thm:hollcK}, $g$ is actually locally homothetic to the standard complex space $\CC^{n}$, which also implies that $g$ flat and locally holomorphically isometric to $\CC^{n}$,  proving (2). The remark also implies (3). 
\end{proof}

The following is now a consequence of Theorem \ref{thm:hollcK}:

\begin{corollary}\label{cor:hollcK}
Let $(X, \{h,J \})$ be an $lcK$-manifold whose universal  covering manifold  is not holomorphically  conformal to $\CC^n$. Then there exists an $lcK$-metric $h'$ conformal to $h$ which is satisfying 
$$  \Iso\left(X,\{h',J \}\right) \, =  \, \mathrm{Conf}(X,\{h,J\})  \; . 
$$ 
%
%
%
\end{corollary}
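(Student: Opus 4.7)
The plan is to reduce the statement to Theorem \ref{thm:hollcK} by lifting the data to the universal cover $\pi: \tilde X \to X$. Set $\tilde h = \pi^{*} h$, and let $g$ be the K\"ahler metric on $\tilde X$ produced by the K\"ahler covering lemma (part (1)), so that $g$ is globally conformal to $\tilde h$. The hypothesis that $\tilde X$ is not holomorphically conformal to $\CC^n$, combined with part (3) of that lemma, implies that $(\tilde X, \{g, J\})$ is not holomorphically isometric to $\CC^n$. Hence Theorem \ref{thm:hollcK}(1) applies and yields a Hermitian metric $\tilde h'$ on $\tilde X$, conformal to $g$ and therefore to $\tilde h$, with
\[
\Iso(\tilde X, \{\tilde h', J\}) \; = \; \mathrm{Conf}(\tilde X, \{g, J\}) \; = \; \mathrm{Conf}(\tilde X, \{\tilde h, J\}) \, .
\]

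Next I would descend $\tilde h'$ to $X$. Every deck transformation $\gamma \in \pi_1(X)$ is an isometry of $\tilde h$, hence a holomorphic conformal automorphism of $\tilde h$ (equivalently of $g$); by the equality above, $\gamma$ is then an isometry of $\tilde h'$. Thus $\tilde h'$ is $\pi_1(X)$-invariant and descends to a Hermitian metric $h'$ on $X$ with $\pi^{*} h' = \tilde h'$. Writing $\tilde h' = u \cdot \tilde h$ for a smooth positive function $u$ on $\tilde X$, the same invariance yields a positive function on $X$ with $h' = u \cdot h$. Since $h$ is $lcK$, this global conformal factor preserves the $lcK$ property, so $h'$ is an $lcK$-metric conformal to $h$.

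For the desired identification $\Iso(X, \{h', J\}) = \mathrm{Conf}(X, \{h, J\})$, the inclusion $\subseteq$ is immediate: any $h'$-isometry is a holomorphic conformal map for $h'$, and since $h'$ is conformal to $h$, it is also conformal for $h$. For $\supseteq$, each $\alpha \in \mathrm{Conf}(X, \{h, J\})$ admits lifts $\tilde\alpha$ which necessarily lie in $\mathrm{Conf}(\tilde X, \{\tilde h, J\})$, and thus in $\Iso(\tilde X, \{\tilde h', J\})$ by the identification above. Pushing down, $\alpha$ preserves $h'$.

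The main delicate point I anticipate is the descent step: ensuring that the Hermitian metric $\tilde h'$ produced upstairs is genuinely $\pi_1(X)$-invariant. This hinges on the observation that the deck group sits inside $\mathrm{Conf}(\tilde X, \{\tilde h, J\})$, so that its members are captured by the group identification in Theorem \ref{thm:hollcK}; once this is recognized, the remaining verifications (preservation of the $lcK$ class under global conformal change, and compatibility of lifts and projections for the conformal and isometry groups) are routine naturality checks for $\pi^{*}$ and quotient by $\pi_1(X)$.
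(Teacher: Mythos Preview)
Your proposal is correct and follows essentially the same route as the paper's proof: lift to the universal K\"ahler cover, apply Theorem~\ref{thm:hollcK} there, observe that the deck group lies in $\mathrm{Conf}(\tilde X,\{\tilde h,J\})=\Iso(\tilde X,\{\tilde h',J\})$ and hence fixes $\tilde h'$, and then descend. Your write-up is in fact somewhat more explicit than the paper's, spelling out the use of part~(3) of the K\"ahler covering lemma, the reason $h'$ remains $lcK$, and both inclusions of the final equality.
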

\begin{proof} Let $\mathsf{p}: (\tilde X, \tilde h) \to (X,h)$ be the universal covering $lcK$-manifold. Let $g$ be the K\"ahler metric on $\tilde X$ conformal to $\tilde h$. Then the group of decktransformations $\Gamma$ for the covering $\mathsf{p}$ is contained in $\mathrm{Hoth} (\tilde X,\{g,J\})$. Since $(X,g)$ is not  holomorphically isometric to $\CC^{n}$, Theorem \ref{thm:hollcK} implies that there exists a Hermitian metric $\tilde h'$ conformal to $g$ such  that $\Iso(\tilde X,\{ \tilde  h',J  \}) \, =  \, \mathrm{Conf}(\tilde X,\{g,J\})  = \mathrm{Conf}(\tilde X,\{\tilde h,J\})$.  Since $\Gamma \leq \Iso(\tilde X,\{ \tilde  h',J \})$, there exists a unique $lcK$-metric $h'$ on $X$, such that $\mathsf{p}: (\tilde X, \tilde h') \to (X,h')$ is a holomorphic Riemannian covering, and this  metric is conformal to $h$. Moreover, $\Iso(X,\{  h',J  \}) = \mathrm{Conf}(X,\{h,J\})$. 
\end{proof} 

%
 
%

\appendix 

\section{Other related results}

\subsection{Conformally flat K\"ahler manifolds} \label{A1}
 
The following results were  prov\-ed by K.\ Yano and I.\ Mogi  \cite[Theorem 4.1]{MK} in the case  
$\dim_{\RR} X \geq 6$, and by S. Tanno \cite{Tano}  for $\dim_{\RR} X =4$. 
%

\begin{theorem}\label{YM}
Any  conformally flat K\"ahler manifold $X$ of dimension $n\geq 6$ is locally flat, that is,  
it is \emph{locally holomorphically isometric} to $\CC^n$.
\end{theorem}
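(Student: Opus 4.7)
The plan is to combine two algebraic constraints on the curvature tensor: the Schouten decomposition forced by conformal flatness, and the $J$-invariance identities forced by the K\"ahler condition. Matching them along a unitary frame at a single point will produce linear relations on the Schouten tensor whose only solution, in real dimension at least six, is the trivial one. Flatness of the metric then follows directly from the Schouten decomposition, and the local holomorphic isometry with the standard complex space follows from Cartan--Ambrose--Hicks applied to the parallel K\"ahler data.

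First I would use the standard fact that on a Riemannian manifold of real dimension $n \geq 4$ conformal flatness is equivalent to the vanishing of the Weyl tensor, so that the Riemann tensor takes the purely algebraic form
$$R_{abcd} \, = \, g_{ac}L_{bd} + g_{bd}L_{ac} - g_{ad}L_{bc} - g_{bc}L_{ad},$$
where $L_{ab} = \frac{1}{n-2}\bigl( \mathrm{Ric}_{ab} - \frac{s}{2(n-1)} g_{ab}\bigr)$ is the Schouten tensor. Next I would invoke the K\"ahler symmetries $R(X,Y,JZ,JW) = R(X,Y,Z,W)$ and $\mathrm{Ric}(JX,JY) = \mathrm{Ric}(X,Y)$, which together imply $L(JX,JY) = L(X,Y)$. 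Pick a point $p \in X$ and a unitary orthonormal frame $\{e_1, Je_1, \dots, e_m, Je_m\}$ of $T_p X$, where $m = n/2$. Orthogonality of the frame makes the Schouten formula evaluated on $(e_i, e_j, e_i, e_j)$ collapse to $L(e_i, e_i) + L(e_j, e_j)$, while the same formula evaluated on $(e_i, e_j, Je_i, Je_j)$ collapses to $0$ whenever $i \neq j$. The K\"ahler identity identifies these two expressions and produces
$$L(e_i, e_i) + L(e_j, e_j) \; = \; 0, \quad i \neq j.$$

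When $m \geq 3$, that is, real dimension $n \geq 6$, this linear system has only the trivial solution: combining the three relations indexed by any triple $\{i,j,k\}$ gives $2L(e_i,e_i) = 0$ for each index. Since any unit vector in $T_p X$ can be chosen as the first element of a unitary frame, $L(v,v) = 0$ for every unit $v$, and polarization of the symmetric form $L$ forces $L = 0$ at $p$. The Schouten decomposition then gives $R = 0$ at $p$, and since $p$ is arbitrary the K\"ahler manifold $(X,g,J)$ is flat. Finally, because both $g$ and $J$ are parallel, Cartan--Ambrose--Hicks applied to the model $(\CC^{m}, g_{\mathrm{std}}, J_{\mathrm{std}})$ identifies neighborhoods of $(X,g,J)$ with neighborhoods in $\CC^{m}$ by holomorphic isometries.

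The principal obstacle, and the source of the dimension assumption, is the linear system $L(e_i,e_i)+L(e_j,e_j) = 0$: with at least three independent complex directions the relations algebraically force each diagonal entry to vanish, but with only two such directions one obtains a single equation that leaves a one-parameter family of nontrivial Schouten tensors. This is precisely why Tanno's separate treatment is required in real dimension four, where the purely algebraic pigeonhole argument sketched above must be supplemented by an analytic input (typically the second Bianchi identity and integration of the resulting divergence relations).
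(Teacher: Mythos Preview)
Your argument is correct. Note, however, that the paper does not supply its own proof of this statement: Theorem~\ref{YM} is quoted in the appendix as a classical result of Yano and Mogi, with the companion four-dimensional case attributed to Tanno, and is then used as a black box in the proof of Theorem~\ref{thm:schoenhol}. What you have written is essentially the standard curvature-algebra argument underlying the Yano--Mogi theorem: the Weyl-free Schouten decomposition together with the K\"ahler identity $R(X,Y,JZ,JW)=R(X,Y,Z,W)$ produces the system $L(e_i,e_i)+L(e_j,e_j)=0$ on a unitary frame, which in complex dimension at least three forces $L\equiv 0$ and hence $R\equiv 0$. Your final step, passing from flatness with parallel $J$ to a local holomorphic isometry with $\CC^{n}$, is precisely what the paper invokes (citing \cite[IX, Theorem 7.9]{KN2}) when it applies this theorem.
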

Here the complex space $\CC^n$ carries the standard flat K\"ahler metric.

\begin{theorem}\label{Tanno}
Any\/ $4$-dimensional  conformally flat K\"ahler manifold $X$ 
is \emph{locally holomorphically isometric} to either $\CC^2$
or the product of $2$-dimensional surfaces $\HH^2_\RR\times S^2$
with constant opposite sign.
\end{theorem}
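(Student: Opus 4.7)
The plan is to exploit the very rigid structure of the curvature tensor of a K\"ahler surface together with the vanishing of the Weyl tensor. First, I would invoke the standard description (due to Derdzinski, already implicit in Apte's formula) of the self-dual Weyl operator $W^+$ on a K\"ahler $4$-manifold $(X,\{g,J\})$: viewed as an endomorphism of self-dual two-forms, $W^+$ has eigenvalue $s/6$ on the line spanned by the K\"ahler form $\om$ and eigenvalue $-s/12$ on the orthogonal two-plane, where $s$ is the scalar curvature. Since $X$ is conformally flat, $W=W^{+}+W^{-}$ vanishes identically, and therefore $s\equiv 0$. In particular, the contracted second Bianchi identity for $W=0$ reduces to the Codazzi condition $(\nabla_X \mathrm{Ric})(Y,Z)=(\nabla_Y \mathrm{Ric})(X,Z)$.

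Next I would analyze the Ricci tensor. Since $g$ is K\"ahler, $\mathrm{Ric}$ commutes with $J$, so in real dimension four it has at each point at most two distinct eigenvalues $\lambda,\mu$, each of multiplicity two, with $J$-invariant eigenspaces. The condition $s=2(\lambda+\mu)=0$ forces $\mu=-\lambda$. On the open set $U\subseteq X$ on which $\lambda\neq 0$, this gives a global decomposition $TU = E_{+}\oplus E_{-}$ into two $J$-invariant, $g$-orthogonal Ricci eigendistributions; on the interior of $X\setminus U$ the Ricci tensor vanishes identically, and together with $W=0$ in dimension four this forces the full Riemann tensor to vanish, so that $g$ is locally holomorphically isometric to flat $\CC^2$ on that open set.

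The key remaining step is to prove that $E_\pm$ are parallel on $U$. Differentiating the eigenvalue relation $\mathrm{Ric}(X)=\lambda X$ for $X\in E_+$ and projecting onto $E_-$ yields $\langle \nabla_Y X, Z\rangle = (2\lambda)^{-1}\langle (\nabla_Y \mathrm{Ric})(X),Z\rangle$ for $Z\in E_-$; a careful manipulation, combining the Codazzi symmetry with $\nabla J = 0$ and the $J$-invariance of $E_\pm$, shows that the right-hand side vanishes. Hence both $E_{+}$ and $E_{-}$ are parallel complementary $J$-invariant distributions, and the local de Rham decomposition theorem yields a holomorphic isometric splitting $(U,g) \cong N_{+}\times N_{-}$ into two K\"ahler manifolds of real dimension two. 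Each $N_{\pm}$ is a Riemann surface whose Gaussian curvature equals $\pm\lambda$, and a short argument using the Codazzi identity restricted to each factor forces these curvatures to be constants of opposite sign; normalizing gives the local holomorphic isometry with $\HH^2_\RR \times S^2$ of opposite constant curvatures asserted in the theorem.

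The main obstacle is the parallelism step above: one must exploit the full force of the K\"ahler condition ($\nabla J=0$ and $J$-invariance of $\mathrm{Ric}$) together with the Codazzi identity to kill all off-diagonal components of $\nabla\mathrm{Ric}$ with respect to the splitting $TX=E_{+}\oplus E_{-}$. Once this is in place, the conclusion follows from classical de Rham theory and the elementary classification of two-dimensional metrics of constant curvature, and the dichotomy between the Ricci-flat and the non-Ricci-flat components yields precisely the two possibilities listed in the theorem.
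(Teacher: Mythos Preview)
The paper does not give its own proof of this statement: Theorem~\ref{Tanno} is quoted from Tanno's 1972 paper \cite{Tano} (together with the companion result of Yano--Mogi for $\dim_\RR X\ge 6$) and is used only as input in the proof of Theorem~\ref{thm:schoenhol}. So there is nothing in the paper to compare your argument against.

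That said, your outline is essentially the correct route and is close in spirit to Tanno's original argument. A few remarks that may sharpen it. First, the ``main obstacle'' you flag can be resolved more cleanly than the eigenspace-by-eigenspace computation you sketch: once you know that $T(X,Y,Z):=(\nabla_X\mathrm{Ric})(Y,Z)$ is totally symmetric (Codazzi, from $W=0$ and $s\equiv 0$) and satisfies $T(X,JY,JZ)=T(X,Y,Z)$ (from $\nabla J=0$ and $J$-invariance of $\mathrm{Ric}$), combine this with the closedness of the Ricci form $\rho=\mathrm{Ric}(J\cdot,\cdot)$, which for K\"ahler metrics gives
\[
T(JX,Y,Z)+T(X,JY,Z)+T(X,Y,JZ)=0.
\]
The second identity yields $T(X,JY,Z)=-T(X,Y,JZ)$, and feeding this into the third forces $T(JX,Y,Z)=0$ for all $X,Y,Z$, hence $\nabla\mathrm{Ric}=0$. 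This is stronger than mere parallelism of $E_\pm$: it makes $(X,g)$ locally symmetric, the Ricci eigenvalues $\pm\lambda$ are automatically constant, and the de~Rham splitting and identification with $\CC^2$ or $\HH^2_\RR\times S^2$ follow at once. Second, your open--closed decomposition into $U=\{\lambda\ne0\}$ and its complement is then unnecessary, since $\lambda$ is globally constant; this also removes any worry about matching the two local models across $\partial U$.
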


Recall that the universal covering of any conformally flat  $m$-dimensional manifold admits a 
conformal development map into the sphere $S^{m}$ and this map is unique up to composition 
with an element of $\Conf(S^{m}) =\PO(m+1,1)$, see for example \cite[Theorem 4]{Kui}. 
In the case of the two theorems, the universal covering 
space $\tilde X$ is thus mapped to $S^4-\{\infty\}= \CC^2$ or
the sphere complement $S^4-S^1=\HH^2_\RR\times S^2$  (compare \cite{KA2}) 
through this developing map.

\subsection{Proof of Theorem  \ref{thm:schoenhol}} \label{proofA2}
%
%
We assume  that $  \Conf(X, \{g,J\})  \leq  \Conf(X,g)$ does not act properly on $X$, where 
$g$ is the K\"ahler metric. 

\smallskip 
Therefore, Schoen's theorem \cite{SC} implies that there exists a conformal diffeomorphism from  $X$ 
to either the sphere $S^{2n}$ 
or $\RR^{2n}$. 
Since $X$ is assumed to be K\"ahler, $H^2(X,\RR) \neq \zsp$,
in case $X$ is compact. Therefore, $S^{2n}$, $n>1$, does not occur. 
Thus, in the case $n>2$, there exists a conformal diffeomorphism $\psi: X \to \RR^{2n}$.
In particular, $(X,g)$ is conformally flat and $X$ is simply connected.

%
%
%

\begin{proof}[Proof of Theorem  \ref{thm:schoenhol}]    
\hspace{1cm} \medskip

(i)\,  When $\dim_{\RR}  X\geq 6$, 
by the result of Yano and Mogi Theorem \ref{YM},
the K\"ahler manifold $X$ has everywhere {holomorphic sectional curvature $0$}. In particular, 
it is locally holomorphically isometric to the standard flat complex space
$\CC^{n}$ (compare \cite[IX, Theorem 7.9]{KN2}). 
Since $X$ is simply connected, the usual monodromy argument (see \cite{Kul}) 
shows that there is a holomorphic map  $\varphi: X\,\ra\, \CC^n$, that is also
an isometric immersion.  

Up to a conformal map we may identify both domains $\RR^{2n}$ and $\CC^n$ with an open subset 
$U = S^{2n} - \{ p \}$ contained in $S^{2n}$. Thus 
$\varphi, \psi$ 
correspond to maps 
$$\bar \varphi, \bar \psi: \; X \to U \subset S^{2n}$$ and both $\bar \varphi$, $\bar \psi$ 
are developing maps for  the 
locally flat conformal structure associated with $(X,g)$.
The uniformization theorem 
for locally flat conformal structures  \cite{Kui} 
implies that the two developing maps $\bar \varphi$, $\bar \psi$ 
are \emph{equivalent} 
by an element $\alpha$ of $\Conf(S^{2n})=\PO(2n+1,1)$,
so that $\alpha \circ \bar \psi= \bar \varphi$ on $X$. Clearly, since $\bar \psi$ is a diffeomorphism, this  
shows that $\bar \varphi$ is an injective embedding of $X$ into $U$.
Note that  every conformal 
embedding of Euclidean space $\RR^{2n}$ into $S^{2n}$ has
as image $S^{2n}$ with a point removed. Since the image of 
$\alpha \circ \bar \psi$ is contained in $U$, we conclude that 
$\alpha \circ \bar \psi$ is, in fact, surjective onto $U$.
Thus the holomorphic isometric immersion
$\varphi: X\ra\, \CC^n$ turns out to be an isometry. 
\medskip

(ii)\, $\dim_{\RR} X=4$. By Tanno's result, Theorem \ref{Tanno}, since $X$ is conformally flat
K\"ahler,
as above,  the simply connected K\"ahler manifold $X$ admits
a holomorphic immersion $\varphi: X\ra \CC^2$ or $\varphi: X\ra \HH^2_\RR\times S^2$, 
respectively.
Recall that there is a developing diffeomorphism 
$\psi: X\ra \RR^{4}$.  
As in part (i),  by the uniformization theorem for locally flat conformal structures, there exists $\alpha \in\PO(5,1)$ with 
$\alpha \circ \bar \psi = \bar \varphi$. Since the image of $\alpha \circ \bar \psi$ is 
$S^{4} - \{ p\}$, it cannot occur that $\bar \varphi$ takes values in  $\HH^2_\RR\times S^2  =  S^{4} -S^{1}$.
As in part (i), we conclude that 
$\varphi: X\ra \CC^2$ is a holomorphic isometry.
\medskip 

(iii) $\dim_{\RR}  X = 2$. By Schoen's theorem \cite{SC}, $X$ is simply connected and there exists a conformal diffeomorphism to either $S^{2}$ or $\RR^{2}$. In terms of  uniformization of Riemann surfaces we conclude that $X$ is biholomorphic 
to $S^{2}$ or $\CC$.    
\end{proof}

%
%
%
%
%
%
%
%

\end{document}